\newcommand{\defin}[1]{%
\relax\ifmmode%
\textcolor{blue}{#1}%
\else \textcolor{blue}{\emph{#1}}%
\fi%
}
\newtheorem{theorem}{Theorem}[section]
\newtheorem{proposition}[theorem]{Proposition}
\newtheorem{lemma}[theorem]{Lemma}
\newtheorem{corollary}[theorem]{Corollary}
\newtheorem{problem}[theorem]{Problem}
\theoremstyle{definition}
\newtheorem{definition}[theorem]{Definition}
\newtheorem{example}[theorem]{Example}
\newtheorem{remark}[theorem]{Remark}
\newtheorem{THEO}{Theorem}
\newtheorem{LEMMA}{Lemma}
\newcommand{\Conv}{\mathrm{Conv}}
\newcommand{\diffx}{{\frac{d}{dx}}}
\newcommand{\diff}[1]{{\frac{d^#1}{dx^#1}}}
\newcommand{\bC}{\mathbb{C}}
\newcommand{\bR}{\mathbb{R}}
\newcommand{\bZ}{\mathbb{Z}}
\renewcommand{\Im}{{\mathrm{Im}}}
\renewcommand{\Re}{{\mathrm{Re}}}
\newcommand{\C}{\mathcal{C}}
\newcommand{\al}{\alpha}
\newcommand{\La}{\Lambda}
\newcommand{\la}{\lambda}
\newcommand {\eps}{\epsilon}
\newcommand{\invset}[1]{{{\mathcal I}_{#1}^T}}
\newcommand{\minvset}[1]{{{\mathrm M}_{#1}^T}}
\newcommand{\hinvset}[1]{{{\mathcal{H}}_{#1}^T}}
\newcommand{\hminvset}[1]{{{\mathrm{HM}}_{#1}^T}}
\newcommand{\ctinvset}[1]{{{\mathcal{CH}}_{#1}^T}}
\newcommand{\ctminvset}[1]{{{\mathrm{CHM}}_{#1}^T}}
\newcommand{\cttminvset}[1]{{{\mathrm{C_2HM}}_{#1}^T}}
\newcommand{\disk}{D}
\newcommand{\cdisk}{{\bar{D}}}
\newcommand{\thsup}{\mathrm{th}}
\newcommand {\NE}{\mathbf{NE}}
\newcommand{\fidx}{\rho}
\numberwithin{equation}{section}
\title[An inverse problem in P\'olya--Schur theory. I.]
{An inverse problem in P\'olya--Schur theory. I. Non-degenerate and degenerate operators}
\author[P. Alexandersson]{Per Alexandersson}
\address{Department of Mathematics, Stockholm University,
S-10691, Stockholm, Sweden}
\email{per.w.alexandersson@gmail.com}
\author[P. Br\"and\'en]{Petter Br\"and\'en}
\address{Department of Mathematics,
Royal Institute of Technology,
SE-100 44, Stockholm, Sweden}
\email{pbranden@kth.se}
\author[B. Shapiro]{Boris Shapiro}
\address{Department of Mathematics,
Stockholm University,
S-10691, Stockholm, Sweden}
\email{shapiro@math.su.se}
\begin{document}

\begin{abstract}
Given a linear ordinary differential operator $T$ with polynomial coefficients,
we  study  the class of closed subsets of the complex plane such that $T$ sends any polynomial 
(resp. any polynomial of degree exceeding a given positive integer) with all 
roots in a given subset to a polynomial with 
all roots in the same subset or to $0$. 
Below we discuss some general properties of such invariant subsets as well 
as the problem of existence of the minimal under inclusion invariant subset.  
\end{abstract}

\maketitle

{\emph{\small 
If a new result is to have any value,
it must unite elements long since known, \\ 
but till then scattered and seemingly foreign to each other, 
and suddenly introduce\\ order  where the appearance of disorder reigned. 
Then it enables us to see at a glance\\ each of these elements 
at a place it occupies in the whole.}

{\hskip6cm \small  --- H.~Poincar\'e, Science and Hypothesis}

\tableofcontents

\bigskip
\section{Introduction}

In 1914, generalizing some earlier results of E.~Laguerre, G.~P\'olya and I.~Schur \cite{PolyaSchur1914}
created a new branch of mathematics now referred to as the P\'olya--Schur theory.
The main result of \cite{PolyaSchur1914} is a  complete characterization of linear operators
acting diagonally in the monomial basis of $\bR[x]$ and  sending any polynomial with
all real roots to a polynomial with all real roots (or to $0$).
Without the requirement of diagonality of the  action a characterization of such
linear operators was obtained   by the
second author jointly with late J.~Borcea \cite{BorceaBranden2009}.

\medskip
The main question  considered in the P\'olya--Schur theory \cite{CravenCsordas2004}
can be formulated as follows.

\begin{problem}\label{prob1} 
Given a subset $S\subset \bC$ of the complex plane, describe the
semigroup
of  all linear operators $T:\bC[z]\to\bC[z]$
sending any polynomial with roots in $S$ to a
polynomial with roots in $S$ (or to $0$).
\end{problem}

\begin{definition}\label{def0}
If an operator $T$ has the latter property, then we say that
\defin{$S$ is a $T$-invariant set}, or that \defin{$T$ preserves $S$}.
\end{definition}

So far Problem~\ref{prob1} has only been solved for the circular domains (i.e., images of
the unit disk under M\"obius transformations), their boundaries \cite{BorceaBranden2009}, 
and  more recently for strips \cite{BrandenChasse2017}.
Even a very similar case of the unit interval is still open at present. 
It seems that for a somewhat general class of subsets $S\subset \bC$, Problem~\ref{prob1}
is  out of reach of all currently existing methods.  

\medskip
In this paper, we consider
an inverse problem in the P\'olya--Schur theory which seems both natural and more 
accessible than Problem~\ref{prob1}. We will restrict ourselves to 
consideration of \emph{closed} $T$-invariant subsets.

\begin{problem}\label{prob:main}
Given a linear operator $T:\bC[x]\to\bC[x]$, characterize all closed $T$-invariant subsets of the complex plane. Alternatively, find a sufficiently large class of 
$T$-invariant sets.
\end{problem} 

For example, if $T = \diff{j}$, then a closed subset $S\subseteq \bC$ is $T$-invariant if and only if it is convex. 
Although it seems too optimistic  to hope for a complete  solution of
Problem~\ref{prob:main} for an arbitrary linear operator $T$,
we present below a number of relevant results  valid
for  linear ordinary differential operators of finite order. 
(Note that  an arbitrary linear operator $T:\bC[x]\to\bC[x]$ can be represented as a formal
linear differential operator with polynomial coefficients,
i.e., $T=\sum_{j=0}^\infty Q_j(x)\diff{j}$ where each $Q_j(x)$ is a polynomial, see \cite{Peetre1959}). 
To move further, we need to introduce some basic notions. 

\begin{definition}\label{def:degNonDegFuchs}
Given a linear ordinary differential operator 
\begin{equation}\label{eq:main}
T = \sum_{j=0}^k Q_j(x)\diff{j}
\end{equation} 
of  finite order $k\ge 1$  with polynomial coefficients, define its \defin{Fuchs index} as 
\[
\fidx_T = \max_{0\le j\le k} (\deg(Q_{j})-j).
\]
Alternatively, the Fuchs index can be defined as the maximal difference between the output and input polynomial,
when acted upon by $T$:
\[
\fidx_T = \max_{p \in \bC[x]} \left( \deg( T(p) ) - \deg(p) \right).
\]
An operator $T$ is called \defin{non-degenerate} if $\fidx_T = \deg(Q_k)-k$, and \defin{degenerate} otherwise.
In other words, $T$ is non-degenerate if $\fidx_T$ is realized by the leading coefficient of $T$. 
We say that $T$ is \defin{exactly solvable} if its Fuchs index is zero.
\end{definition}

A few operators illustrating the situation are shown in Table~\ref{tab:operatorExamples},
with some of their properties listed.

\begin{table}[!ht]
 \centering
\begin{tabular}{lcl}
\textbf{Operator} & \textbf{Fuchs index} & \textbf{Properties} \\ 
\midrule 
$(x^3+2x) \diff{3} + x \diff{2} + 1 $  & 0 & Exactly solvable, non-degenerate \\[0.3cm]
$ (x+1)\diff{3} + x^4 \diff{2} + 2x $  & 2 & Degenerate \\[0.3cm]
$ x^2\diff{3} + 4\diff{2}$  & -1 & Non-degenerate \\[0.3cm]
\bottomrule
\end{tabular}
\vskip0.3cm
\caption{Three examples of differential operators. }\label{tab:operatorExamples}
\end{table}

\begin{definition}\label{def:T_n}
Given a linear operator $T:\bC[x]\to \bC[x]$, we denote by $\invset{n}$ 
the collection  of all closed  subsets $S\subset \bC$ 
such that for every polynomial of degree $n$ with roots in $S$,
its image  $T(p)$ is either $0$ or has all roots in $S$. 
In this situation, we say that $S$ \defin{belongs to the class $\invset{n}$} or, equivalently,  that $S$ is \defin{$T_n$-invariant}. 

Similarly, a \emph{closed} set $S$  belongs to  \emph{the class $\invset{\geq n}$}
if for every polynomial of degree \emph{at least} $n$ with roots in $S$,
its image  $T(p)$ is either $0$ or has all roots in $S$. In this case we say that $S$ is \defin{$T_{\ge n}$-invariant}.  By definition, the class $\invset{\geq 0}$ 
coincides with the class of all $T$-invariant sets.  We say that a set $S \in \invset{n}$ 
(resp. $S \in \invset{\geq n}$) is \defin{minimal}
if there is no closed proper nonempty subset of $S$ belonging to  $\invset{n}$ 
(resp. to $\invset{\geq n}$).
\end{definition}

\begin{remark}
Obviously, for any $T$ and 
any $n$, the whole complex plane $\bC$ is  a trivial example of  a set belonging 
to both $\invset{n}$ and $\invset{\geq n}$. On the one hand,  in case when the operator $T$ preserves the space of polynomials 
of degree $n$ it is more natural to study   the 
class $\invset{n}$. In particular, any exactly solvable operator preserves the degree of 
polynomials it acts upon (except for possibly finitely many exceptions in low degrees). 
Thus, for an exactly solvable operator, it makes sense to consider the class $\invset{n}$ and its elements for all (sufficiently large) $n$ and study their behavior when $n\to\infty$. 
On the other hand, for an arbitrary linear operator $T$ it is more natural to consider non-trivial subsets of $\bC$ 
belonging to $\invset{\ge n}$  where $n$ is any non-negative integer.   
 Observe that  families of sets belonging to  $\invset{n}$ (resp. $\invset{\geq n}$)   
are closed under  taking the intersection. 
\end{remark}

\medskip
In the present paper (which is the first part of two) 
we study  the class $\invset{\ge n}$ for an arbitrary $T$ of the form \eqref{eq:main}. 
The sequel article \cite {AlBrSh2} is devoted to the study of the class  
$\invset{n}$ and also of the so-called Hutchinson invariant sets for exactly solvable operators and their relation to the classical complex dynamics. A recent paper \cite{AHNST} contains the results of the first and the third authors jointly with N.~Hemmingsson, D.~Novikov, and  G.~Tahar on a similar topic where we provide many details about  the so-called continuous Hutchinson invariant sets for operators $T$ of order $1$.  

\medskip
The structure of the paper is as follows. In Section~\ref{sec:general},
we present and prove some general results about  $\invset{\ge n}$
for an arbitrary operator $T$ (with non-constant leading term).
In Section~\ref{sec:non-degenerate}, we prove all results related to non-degenerate operators.
In Section~\ref{sec:degenerate} and Section~\ref{sec:deg}, we prove all results related to degenerate differential operators
including operators with constant leading term.   In Section~\ref{sec:trop}  we provide preliminary information about the asymptotic root behavior for bivariate polynomials used
in Section~\ref{sec:deg}.
In Section~\ref{sec:examples} we discuss several natural set-ups and problem formulations similar to that of the current paper.
Finally, Section~\ref{sec:final} contains a number of open problems connected to the presented results.

\medskip 
\noindent 
\textbf{Acknowledgements.} 
Research of the third author was supported by the grants VR 2016-04416 and VR 2021-04900 of the Swedish Research Council. He wants to thank Beijing Institute for Mathematical Sciences and Applications (BIMSA) for the hospitality in Fall 2023.

\section{General properties of invariant sets}\label{sec:general}


\begin{definition} 
Given an operator  $T$ of the form \eqref{eq:main} with $Q_k(x)$ different from a constant, 
denote by $\Conv (Q_k)\subset \bC$ the convex hull of the zero locus of  $Q_k(x)$. 
We refer to $\Conv (Q_k)$ as the \defin{fundamental polygon} of $T$.
\end{definition}

The next proposition contains  basic information about invariant sets in $\invset{\geq n}$.

\begin{theorem}\label{th:generalGeN} 
The following facts hold: 

\begin{enumerate}[label=\normalfont{(\arabic*)}]

\item for any operator $T$ as in \eqref{eq:main} 
and any non-negative integer $n$, every $S \in \invset{\geq n}$ is convex;

\item for any operator $T$ as in \eqref{eq:main} and any non-negative integer $n$, 
if $S$ is an unbounded closed set belonging to $\invset{\geq n}$, then $S$ is $T$-invariant, i.e., $S$ belongs to $\invset{\geq 0}$;

\item for any $T$ as in \eqref{eq:main} with $Q_k(x)$ 
different from a constant and any non-negative integer $n$, every $S \in \invset{\geq n}$ contains  the fundamental polygon $\Conv(Q_k)$; 

\item for any $T$ as in \eqref{eq:main} 
with $Q_k(x)$ different from a constant and any non-negative integer $n$, 
the set $\invset{\geq n}$ has a unique minimal (under inclusion) element.  
\end{enumerate}
\end{theorem}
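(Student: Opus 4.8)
The plan is to prove the four assertions essentially in sequence, using (1)--(3) as inputs for (4). For part (1), I would show that if $S$ is not convex, then there is a line segment with endpoints $a,b \in S$ whose midpoint (or some interior point) lies outside $S$; applying $T$ to high powers of $(x-a)^r(x-b)^s$ for suitable large $r,s$ and $r+s\ge n$, and letting $r/s$ vary, I would force a root of $T(p)$ to approach any prescribed point of the segment $[a,b]$ by an asymptotic root-counting argument — the leading behaviour of $T\bigl((x-a)^r(x-b)^s\bigr)$ is governed by the term $Q_k(x)\cdot \tfrac{d^k}{dx^k}$ applied to the power, whose roots near $a$ and $b$ pinch towards the segment. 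Choosing the ratio so that a root lands at the excluded point gives a contradiction with $S \in \invset{\ge n}$. For part (2), the idea is that if $S$ is unbounded and $p$ has degree $m < n$ with roots in $S$, I can multiply $p$ by a factor $(x-c)^{n-m}$ with $c \in S$ chosen far out along an unbounded direction; then $T$ of this product has, by a continuity/perturbation argument as $|c|\to\infty$, roots that converge to the roots of $T(p)$ together with roots escaping to infinity inside $S$ (using that $S$ is closed, convex by (1), and unbounded, hence contains a ray or a translate of a cone). Passing to the limit shows $T(p)$ already has its roots in $S$.

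For part (3), I would argue by contradiction: suppose $\zeta$ is a zero of $Q_k$ with $\zeta \notin S$. Since $S$ is closed and (by (1)) convex, there is a closed half-plane $H \supseteq S$ with $\zeta \notin H$, or more simply a disk around $\zeta$ disjoint from $S$. Take $p = (x-a)^n$ for $a \in S$; then $T(p) = Q_k(x)\,\frac{d^k}{dx^k}(x-a)^n + (\text{lower order in }\tfrac{d}{dx})$, and for $n$ large the term $Q_k(x)\binom{n}{k}k!\,(x-a)^{n-k}$ dominates, so $T(p)$ has a root very close to $\zeta$ — again using an asymptotic root argument as $n \to \infty$, and then a scaling/degree-bookkeeping argument to handle a fixed $n \ge$ the stated threshold. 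Actually the cleanest route is: if $\zeta \notin S$, pick $a \in S$ and consider that any $S' \in \invset{\ge n}$ must contain, by closure under intersection, the intersection of all such sets; so it suffices to exhibit \emph{one} polynomial of degree $\ge n$ with roots at a point of $S$ whose image has a root arbitrarily close to $\zeta$, which the leading-term analysis provides.

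Part (4) then follows formally: by (1), every element of $\invset{\ge n}$ is convex, hence closed under arbitrary intersections while remaining convex; the family $\invset{\ge n}$ is closed under intersection (noted in the Remark), so $S_{\min} := \bigcap_{S \in \invset{\ge n}} S$ is itself a closed convex set, and one checks directly that it lies in $\invset{\ge n}$ (if $p$ has degree $\ge n$ with roots in $S_{\min}$, then its roots lie in every $S \in \invset{\ge n}$, so $T(p)$ has roots in every such $S$, hence in $S_{\min}$). By (3), $S_{\min} \supseteq \Conv(Q_k) \ne \emptyset$, so the minimal element is nonempty and is manifestly the unique minimal one. The main obstacle is the asymptotic root-localization argument underpinning (1) and (3): making precise that, as the exponents (or degree) grow, the zeros of $T(p)$ track the zeros of the leading symbol $Q_k(x)$ together with the "pinched" zeros on segments between roots of $p$. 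I expect this to require a careful Newton-polygon / Rouché-type estimate comparing $Q_k(x)\frac{d^k}{dx^k}p$ against the remaining terms $\sum_{j<k} Q_j(x)\frac{d^j}{dx^j}p$ on suitable circles, exploiting that dividing $p$ by $(x-a)^{k}$ makes the $j$-th derivative terms uniformly smaller near the relevant region; the bounded-versus-unbounded dichotomy in (2) is comparatively soft once (1) is in hand.
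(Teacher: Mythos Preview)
Your plan is correct and follows essentially the same route as the paper: asymptotic analysis of $T\bigl((x-a)^r(x-b)^s\bigr)$ as the exponents grow for (1), padding by $(x-c)^{n-\deg p}$ with $c\to\infty$ in $S$ for (2), the leading-term dominance of $Q_k(x)(x-a)^{m-k}$ for (3), and the intersection argument for (4). The only minor variation is that the paper takes $r=s=m$ in (1), obtaining just the midpoint $\tfrac{a+b}{2}\in S$ and then iterating (closure plus repeated bisection gives the whole segment), whereas you propose varying $r/s$ to hit an arbitrary convex combination directly; both work, and in (3) note that since $S\in\invset{\ge n}$ you may freely send the degree $m\ge n$ to infinity, so no separate ``fixed $n$'' argument is needed.
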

\begin{proof}
{Item \upshape{(1)}}. Fix $S \in \invset{\geq n}$ and choose $x_1,x_2\in S$. 
Take $p(x)=(x-x_1)^{m}(x-x_2)^{m}$ for sufficiently large $m$, and consider $p^{(\ell)}(x)$.
Then 
\[
p^{(\ell)}(x) = \sum_{j=0}^\ell \binom{\ell}{j} \frac{m!}{(m-j)!} \frac{m!}{(m+j-n)!}(x-x_1)^{m - j} (x-x_2)^{m +j-\ell}
\]
which implies  that
\begin{align*}
q(x)\coloneqq \frac{p^{(\ell)}(x)}{  (x-x_1)^{m-\ell} (x-x_2)^{m-\ell}  } &=
\sum_{j=0}^\ell \binom{\ell}{j} (m)_j(m)_{\ell-j} (x-x_1)^{\ell-j} (x-x_2)^{j}.
\end{align*}
Dividing both sides by $m^\ell$ and expanding the Pochhammer symbols, we see that
\begin{align*}
m^{-\ell}q(x) &=  \left( \sum_{j=0}^\ell \binom{\ell}{j} (x-x_1)^{\ell-j} (x-x_2)^{j} \right) + \frac{R_1(x)}{m} +  \frac{R_2(x)}{m^2} + \dotsb \\
 &=  \left((x-x_1)+(x-x_2)\right)^\ell + O(m^{-1}) R(x). 
\end{align*}

Using the latter expression, we obtain
\[
p^{(\ell)} = m^\ell ((x-x_1)(x-x_2))^{m-\ell} \left(\left(2x-x_1-x_2\right)^\ell + O(m^{-1}) R(x) \right).
\]
Therefore,
\begin{align*}
\frac{T(p(x))}{m^\ell } &= Q_k(x) ((x-x_1)(x-x_2))^{m-k} \left( \left(2x-x_1-x_2\right)^\ell + O(m^{-1}) R(x) \right) \\
 & + \sum_{j=1}^{k} \frac{ Q_{k-j} ((x-x_1)(x-x_2))^{m-k+j} }{m^j}  \left(\left(2x-x_1-x_2\right)^\ell + O(m^{-1}) R_j(x) \right).
\end{align*}
All terms in the above sum approaches $0$ as $m$ gets large, implying that 
the roots of $T(p(x))$ are close to that of
\[
Q_k(x)((x-x_1)(x-x_2))^{m-\ell} \left(2x-x_1-x_2)\right)^\ell.
\]
 Since $\frac{x_1+x_2}{2}$  is a  root of the latter polynomial, the original set $S$ is convex.
 
 \medskip
 \noindent
 {Item \upshape{(2)}}. Assume that $S$ is an unbounded set belonging to $\invset{\ge n}$ for some positive $n$. 
 Take some polynomial $p$ of degree less that $n$ with roots in $S$. 
 Consider a $1$-parameter family of polynomials of degree $n$ of the form 
 $P_t\coloneqq (x-\al(t))^{n-\deg p}p(x)$, $t\in [0,+\infty)$,  
 where $\al(t)$ is a variable point in $S$ which continuously depends on $t$ and 
 escapes to $\infty$ when $t\to +\infty$. (Such a family obviously exists since $S$ is convex and unbounded.) 
 Consider the polynomial family $T(P_t)$. 
 Since $S\in \invset{\ge n}$, the roots of $T(P_t)$ belong to $S$ for any finite $t$ and 
 continuously depend on $t$. Since $S$ is closed the same holds for the limit of the 
 roots of $T(P_t)$ which do not escape to infinity. 
 Notice that the set of finite limiting roots exactly coincides 
 with the set of roots of $T(p)$ which finishes the proof of item~\upshape{(ii)}. 

 \medskip
 \noindent
 {Item \upshape{(3)}}. Take an   arbitrary $T$ with $Q_k(x)$ different 
from a constant, any non-negative integer $n$,  and an arbitrary set $S \in \invset{\geq n}$. 
Set $p(x) = (x-\alpha)^m$, where $\alpha \in S$.
Then
\[
\frac{T(p(x))}{(m)_k}  = \sum_{j=0}^k Q_j(x) \frac{(m)_j}{(m)_k} (x-\alpha)^{m-j}.
\]
If  $m \rightarrow \infty,$ then $\frac{(m)_j}{(m)_k} \rightarrow 0$ for $j<k$.
Hence, the roots of $T(p(x))$ approach those of $Q_k(x)(x-\alpha)^{m-k}$
as $m$ grows.

 \medskip
 \noindent
 {Item \upshape{(4)}}. Observe that for any differential operator $T$ as above, 
the set $\invset{\geq n}$ is non-empty since it at least contains  the whole $\bC$. 
Now notice that by  items~\upshape{(1)}~--~\upshape{(2)}, the intersection of all sets 
in $\invset{\geq n}$ is non-empty. Indeed  each of them contains all 
roots of $Q_k(x)$.  Since this intersection is convex it also contains  the convex hull $\Conv(Q_k)$ of the roots of $Q_k(x)$. 
Since $\invset{\geq n}$ consists of closed convex sets with a non-empty common intersection, 
there is the unique minimal set in $\invset{\geq n}$.
\end{proof}

Let us denote by $\minvset{\geq n}$ the unique minimal element in $\invset{\geq n}$ whose existence is guaranteed by item (4) of Theorem~\ref{th:generalGeN} .
The following consequence of Theorem~\ref{th:generalGeN} is straightforward.

\begin{corollary}\label{cor:natural} 
{\normalfont(i)} Under the assumption that $Q_k(x)$ is not constant, one has the sequence of inclusions of closed convex sets
\begin{equation}\label{eq:limit}
\minvset{\ge 0} \supseteq \minvset{\ge 1} \supseteq \dotsb.
\end{equation}

\noindent
{\normalfont(ii)} Under the same assumption, if for some $n$, there exists a compact set $S\in \invset{\ge n}$ 
then $\minvset{\ge m}$ is compact for all $m\ge n$ and there exists a well-defined limit 
\begin{equation}\label{eq:limit2}
\defin{ \minvset{\infty} } \coloneqq \lim_{n\to\infty}  \minvset{\ge n}. 
\end{equation}
Obviously, $\minvset{\infty}$ is a closed convex compact set. 
\end{corollary}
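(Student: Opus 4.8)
The plan is to derive Corollary~\ref{cor:natural} directly from the four items of Theorem~\ref{th:generalGeN}, treating the two parts separately.

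For part (i), the key observation is that any polynomial of degree at least $n+1$ is also a polynomial of degree at least $n$, so the defining condition for membership in $\invset{\ge n}$ is \emph{stronger} than that for $\invset{\ge n+1}$; hence $\invset{\ge n}\subseteq \invset{\ge n+1}$ as families of sets. The minimal element $\minvset{\ge n}$ is the intersection of all members of $\invset{\ge n}$ (as established in the proof of item (4)), and intersecting over a larger family yields a smaller or equal set, so $\minvset{\ge n}\supseteq \minvset{\ge n+1}$. Iterating gives the chain \eqref{eq:limit}. That $\minvset{\ge 0}$ begins the chain is simply the content of Definition~\ref{def:T_n}, where $\invset{\ge 0}$ is the full class of $T$-invariant sets. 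The fact that each term is closed and convex is immediate from item (1) of the theorem. This part is routine.

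For part (ii), suppose $S\in\invset{\ge n}$ is compact for some $n$. First I would note that $S$ cannot be unbounded, so item (2) does not force anything, but more importantly $\minvset{\ge n}\subseteq S$ by minimality, so $\minvset{\ge n}$ is a closed subset of a compact set and hence compact. By the chain \eqref{eq:limit}, $\minvset{\ge m}\subseteq \minvset{\ge n}$ for all $m\ge n$, so each $\minvset{\ge m}$ is likewise compact. Now I have a nested decreasing sequence of nonempty compact convex sets $\minvset{\ge n}\supseteq \minvset{\ge n+1}\supseteq\dotsb$; by the finite intersection property for compact sets (Cantor's intersection theorem), the intersection $\bigcap_{m\ge n}\minvset{\ge m}$ is nonempty, compact, and convex. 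I would then argue that this intersection is precisely the limit in the Hausdorff metric: for a decreasing sequence of compact sets, Hausdorff convergence to the intersection is standard, so $\minvset{\infty}$ in \eqref{eq:limit2} is well-defined and equals $\bigcap_{m\ge n}\minvset{\ge m}$, which is closed, convex, and compact. (One should also remark that all of these sets contain $\Conv(Q_k)$ by item (3), so the limit is in fact nonempty for this independent reason, and in particular nontrivial when $Q_k$ is nonconstant.)

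The only step requiring a word of care is the claim that $\invset{\ge n}\subseteq\invset{\ge n+1}$ as families — one must check that a \emph{closed} set satisfying the degree-$\ge n$ condition automatically satisfies the degree-$\ge(n+1)$ condition, which is trivially true since the latter quantifies over fewer polynomials. There is no genuine obstacle here; the corollary is a formal consequence of the theorem, and the main content was already proved. If anything, the subtlety worth flagging is the passage from the nested-intersection description to genuine Hausdorff-metric convergence, but for decreasing sequences of nonempty compacta this is a classical fact and can be cited rather than reproved.
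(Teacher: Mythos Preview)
Your proposal is correct and matches the paper's own treatment: the paper does not give a separate proof but simply declares the corollary a ``straightforward'' consequence of Theorem~\ref{th:generalGeN}, and what you have written is precisely the natural unpacking of that claim. Your argument for (i) via the inclusion $\invset{\ge n}\subseteq\invset{\ge n+1}$ of families, and for (ii) via Cantor's intersection theorem applied to the nested compacta, is exactly what the authors have in mind.
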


\begin{remark}
The assumption that $Q_k(x)$ is different from a constant is important for the existence of 
the unique minimal under inclusion element in $\invset{\geq n}$.  
Many operators with a constant leading term violate this property.
For example,  for $T = \diffx,$ every convex closed
subset of $\bC$ belongs to $\invset{\geq n}$
for every non-negative integer $n$. In fact, every point in $\bC$ is a minimal set for $T=\diffx$. 
 More details about operators with a constant leading term can be found in Section~\ref{sec:deg}.
\end{remark}

\begin{remark}
Corollary~\ref{cor:nonDegenerateStableDisks} of the next section  implies that for a non-degenerate $T$,
the minimal set  $\minvset{\geq n}$ is compact for any sufficiently large $n$. However this compactness property might fail for small $n$. Theorem~\ref{thm:limit} below claims that
$\minvset{\infty}$ coincides with the fundamental polygon $Conv(Q_k)$. 

On the other hand,  as we will show  in Proposition~\ref{pr:degen} of \S~\ref{sec:degenerate}, for any degenerate operator $T$ and non-negative integer $n$, every set in $\invset{\ge n}$ and, in particular, $\minvset{\ge n}$ is unbounded implying that  compact invariant sets  exist  if and only if $T$ is non-degenerate operators only.   Together with item (2) of Theorem~\ref{th:generalGeN} this implies that for any degenerate $T$ and any positive integer $n$,  $\invset{\geq n}=\invset{\geq 0}$ and if either at least one of $Q_k(x)$ or $Q_0(x)$ has positive degrees then 
\begin{equation}\label{eq:limitdeg}
\minvset{\ge 0} = \minvset{\ge 1} =  \minvset{\ge 2}\dotsb
\end{equation}
which is a very essential difference between the cases of non-degenerate and degenerate operators. (The fact that every $T$-invariant set $S$ contains all the roots of $Q_0(x)$ follows from the trivial identity $T(1)=Q_0(x)$ and that $S$ contains all roots of $Q_k(x)$ is shown in item (3) of Theorem~\ref{th:generalGeN}). 
\end{remark}

\section{Non-degenerate operators}\label{sec:non-degenerate}

The  main result of this section is Corollary~\ref{cor:nonDegenerateStableDisks},
claiming that for a fixed non-denegerate differential operator $T$,
there exists a nonnegative integer $n$ such that $\invset{\geq n}$ contains all sufficiently large disks. This implies compactness of the minimal set  $\minvset{\geq n}$ for large $n$. 
 Unfortunately, at present we do have an explicit  description the boundary of  $\minvset{\geq n}$ for a given $T$ and $n$. Our best result in this direction is  Theorem~\ref{thm:limit} which claims that the limit $\minvset{\infty}$ coincides with the fundamental polygon $Conv(Q_k)$.
 \medskip

The next example shows that Corollary~\ref{cor:nonDegenerateStableDisks} is the best we can hope for,
as there exist non-degenerate exactly solvable operators for which $\minvset{\geq n}$ is non-compact for small values of $n$.
\begin{example}
Consider the non-degenerate exactly solvable operator given by 
\begin{equation}
T = \left(-\frac{x^2}{4} + \frac{x}{4}\right)\frac{d}{dx^2} + \left( \frac{x}{4} - \frac{1}{2}\right)
\diffx +  1.
\end{equation}
We have chosen $T$ in such a way that for every $z \in \bC$,
\begin{equation} \label{eq:exactlySovlableNonCompact}
 T\left[ (x-z)^2 \right] = \left(x - 2 z \right) \left( x - (\tfrac{z}{2} + \tfrac{1}{2}) \right).
\end{equation}
Take any closed subset $S \in \invset{\geq 2}$. 
The first factor in \eqref{eq:exactlySovlableNonCompact} ensures that if $z \in S$, then we also have $2z \in S$.
The second factor ensures that if $z \in S$, then $\frac12(z+1) \in S$.
These two facts imply that $S$ must contain the interval $[1,\infty)$ of the real axis.
In particular, the minimal set $\minvset{\geq 2}$ cannot be bounded.

Moreover, the image of $(x-1)^4$ has $-3$ as root. This then implies that 
the entire real line lies in $\minvset{\geq 2}$.
Finally, the image of $(x + 1)^2 (x - 1)^2$ has two complex (conjugate) roots,
and this then implies that $\minvset{\geq 2}$ is in fact the entire $\bC$.
\end{example}

\subsection{Existence of invariant disks} 

In this subsection we will show that for any non-degenerate operator $T$, the collection $\invset{\geq n}$ of its $n$-invariant sets contains large disks centered at $0$ for all sufficiently large $n$. 

Define the \defin{$n^\thsup$ Fuchs index} of a linear operator $T : \bC_n[x] \rightarrow \bC[x]$ as given by 
\begin{equation}
\rho = \rho_n = \max_{0\leq j \leq n} (\deg T(x^j)-j), 
\end{equation}
and call $T$ \defin{non-degenerate} if $\deg T(x^n)-n = \rho_n$. 
Set $G_T(x,y) \coloneqq T[(1+xy)^n]$ and note that 
there exist polynomials $P^n_\ell$, $\ell = -n, \dotsc, \rho$, of degree at most $n$, 
such that
\begin{equation}
G_T(x,y)= \sum_{-n \leq \ell \leq \rho} x^\ell P_\ell^n(xy).
\end{equation}
Thus $T$ is non-degenerate if and only if the degree of $P_\rho^n$ is $n$. 
If $T = \sum_{j=0}^k Q_j(x)\diff{j}$ is   a differential operator of order $k$, then
\begin{equation}\label{rn}
G_T(x,y) = \sum_{j=0}^k j! x^{-j} Q_j(x) \binom{n}{j} (xy)^j(1+xy)^{n-j},
\end{equation}
and it follows that
\begin{equation}\label{prn}
P_\ell^n(x)= \sum_{j=0}^k j!a_{\ell,j}\binom{n}{j} x^j (1+x)^{n-j},
\end{equation}
where $a_{\ell,j}$ is the coefficient of $x^{j+\ell}$ in $Q_j(x)$. Define 
\begin{equation}\label{fprn}
f_\ell^n(x)= \sum_{j=0}^k j!a_{\ell,j}\binom{n}{j}x^j.
\end{equation}

\medskip 

In what follows, $\disk_R$ denotes the open disk $\{x \in \bC : |x|< R\}$,
and $\cdisk_R$ is the closure of $\disk_R$.
We also define $\Omega_R$ as the open set $\{(x,y) \in \bC^2 : |x| > R \text{ and } |y| > 1/R\}$.

\begin{proposition}[{\cite[Thm.7]{BorceaBranden2009}}]\label{prop:bb09}
Let $T : \bC_n[x] \rightarrow \bC[x]$ be a linear operator of rank greater than one. The disk $\cdisk_R$ is $T_n$-invariant if and only if $G_T(x,y) \neq 0$ for 
all $(x,y) \in \Omega_R$. 
\end{proposition}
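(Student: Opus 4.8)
The plan is to deduce the proposition from the Grace--Walsh--Szeg\H{o} coincidence theorem (GWS), following the strategy behind the Borcea--Br\"and\'en ``master theorems''; indeed the statement \emph{is} \cite[Thm.~7]{BorceaBranden2009}, so one may also simply quote it. First I would reformulate it: since $\cdisk_R$ is closed, a nonzero polynomial $q$ has all its zeros in $\cdisk_R$ exactly when $q$ has no zero in the open exterior $\{|w|>R\}$, so $\cdisk_R$ is $T_n$-invariant if and only if every $p\in\bC_n[x]$ of degree $n$ with all zeros in $\cdisk_R$ satisfies $T(p)\equiv0$ or $T(p)(w)\neq0$ for all $|w|>R$. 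A routine limiting argument reduces this to polynomials $p$ with all zeros in the \emph{open} disk $\disk_R$ and with $p(0)\neq0$: push any zero of $p$ on the circle $|x|=R$ slightly inward and any zero at $0$ slightly off the origin; the zeros of the resulting $T(p)$ remain in the bounded set $\cdisk_R$, so none escapes to infinity and the closed condition passes to the limit.

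Next I would polarize the symbol. On the diagonal $(1+xy)^n=\prod_{i=1}^n(1+xy_i)$, so put $\widetilde G_T(x;y_1,\dots,y_n):=T\bigl[\prod_{i=1}^n(1+xy_i)\bigr]$; this is symmetric and affine in each $y_i$, satisfies $\widetilde G_T(x;y,\dots,y)=G_T(x,y)$, and its coefficient of $y_1\cdots y_n$ is $T(x^n)$. If $p(x)=c\prod_{i=1}^n(x-\alpha_i)$ with all $\alpha_i\neq0$, then $p(x)=c'\prod_i(1+xy_i)$ with $y_i=-1/\alpha_i$ and $c'\neq0$, so $T(p)(w)=c'\,\widetilde G_T(w;-1/\alpha_1,\dots,-1/\alpha_n)$. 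Now assume $G_T\neq0$ on $\Omega_R$ and fix $p$ and $w$ as above: the points $-1/\alpha_i$ lie in the open circular region $\{|y|>1/R\}$, and if $T(x^n)(w)\neq0$ the polynomial $\widetilde G_T(w;\cdot)$ has full degree, so GWS produces $y^\ast\in\{|y|>1/R\}$ with $\widetilde G_T(w;-1/\alpha_1,\dots,-1/\alpha_n)=\widetilde G_T(w;y^\ast,\dots,y^\ast)=G_T(w,y^\ast)$. Since $(w,y^\ast)\in\Omega_R$ this is nonzero, hence $T(p)(w)\neq0$; as $|w|>R$ was arbitrary, $T(p)$ has all zeros in $\cdisk_R$, and by the reformulation $\cdisk_R$ is $T_n$-invariant.

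For the converse I would argue by contradiction: suppose $\cdisk_R$ is $T_n$-invariant but $G_T(w_0,y_0)=0$ with $(w_0,y_0)\in\Omega_R$. Then $p_0(x):=(1+xy_0)^n$ has degree $n$ with its unique zero $-1/y_0$ in $\disk_R$, while $T(p_0)(w_0)=G_T(w_0,y_0)=0$ and $|w_0|>R$, forcing $T(p_0)\equiv0$. Here $\operatorname{rank} T>1$ enters: choose $g_1,g_2$ with $T(g_1),T(g_2)$ linearly independent. Either $T(g_1)(w_0)=T(g_2)(w_0)=0$ and we set $p:=p_0+\varepsilon g_1$ for small $\varepsilon$, or we pick $(s_1,s_2)\neq(0,0)$ small with $s_1T(g_1)(w_0)+s_2T(g_2)(w_0)=0$ and set $p:=p_0+s_1g_1+s_2g_2$. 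In either case $p$ has degree $n$ with all zeros in $\disk_R$ (being close to $p_0$), while $T(p)\neq0$ has a zero at $w_0\notin\cdisk_R$, contradicting $T_n$-invariance; hence $G_T\neq0$ on $\Omega_R$.

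The step I expect to cause the most trouble is the degree bookkeeping in the GWS argument: for the non-convex region $\{|y|>1/R\}$ one needs $\widetilde G_T(w;\cdot)$ to have full degree, i.e.\ $T(x^n)(w)\neq0$, which can fail (for instance when $T(x^n)\equiv0$). The standard cure, and the one used in \cite{BorceaBranden2009}, is to run the whole argument \emph{projectively}: work with binary forms of degree $n$ in place of $\bC_n[x]$, with zeros and circular regions living in $\mathbb{P}^1$; then every form genuinely has degree $n$, the full-degree hypothesis in GWS holds automatically, and the point at infinity is not distinguished, so the limiting and perturbation steps above go through with only cosmetic changes. Finally, the hypothesis $\operatorname{rank} T>1$ is exactly what excludes operators of rank $\leq1$, for which $T(p)$ has a rigid zero set that the symbol $G_T$ does not faithfully record and for which the equivalence fails.
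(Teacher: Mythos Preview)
The paper does not supply its own proof of this proposition: it is stated with a citation to \cite[Thm.~7]{BorceaBranden2009} and then used as a black box in the proofs of Theorem~\ref{genethm} and its corollaries. So there is nothing to compare your argument against here; the correct move in the context of this paper is simply to quote the result, which you yourself note is an option.

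That said, your sketch is a faithful outline of the Borcea--Br\"and\'en argument: polarize the symbol, apply Grace--Walsh--Szeg\H{o} over the exterior disk, and handle the converse by a rank-$>1$ perturbation. Your identification of the degree issue in GWS (the full-degree requirement on $\widetilde G_T(w;\cdot)$ when the circular region is non-convex) and its resolution via the projective/binary-forms framework is exactly right and is how \cite{BorceaBranden2009} proceeds. One minor point worth tightening: in your converse perturbation you should make explicit that $g_1,g_2\in\bC_n[x]$ (which follows from $\operatorname{rank} T>1$ as an operator on $\bC_n[x]$), so that $p=p_0+\varepsilon g_1$ or $p=p_0+s_1g_1+s_2g_2$ indeed stays in $\bC_n[x]$ of degree exactly $n$ for small parameters.
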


\begin{theorem}\label{genethm}
Suppose $T : \bC_n[x] \rightarrow \bC[x]$ is a non-degenerate linear operator with $n^\thsup$ Fuchs 
index $\rho$. Let $g(x)$ be the greatest common divisor of $\{P_\ell^n(x)\}_\ell$. 
Then the closed disk $\cdisk_R = \{x : |x| \leq R\}$ is $T_n$-invariant 
for all sufficiently large $R>0$ if 
and only if 
\begin{enumerate}
\item all zeros of $g(x)$ lie in $\{x: |x| \leq 1\}$;
\item all zeros of $P_\rho^n(x)/g(x)$ lie in  $\{x : |x|< 1\}$. 
\end{enumerate}
\end{theorem}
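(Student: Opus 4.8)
The strategy is to use Proposition~\ref{prop:bb09} as the pivot: $\cdisk_R$ is $T_n$-invariant precisely when $G_T(x,y)\neq 0$ on $\Omega_R=\{|x|>R,\ |y|>1/R\}$. So I would study, for large $R$, the zero set of
\[
G_T(x,y)=\sum_{-n\le \ell\le\rho} x^\ell P_\ell^n(xy)
\]
in the region $\Omega_R$. The natural substitution is $u=xy$ (so $|u|>1$ on $\Omega_R$, with no other constraint linking $u$ and $x$ beyond that $|x|>R$). Factoring out $x^{-n}$, the condition $G_T\neq 0$ on $\Omega_R$ becomes: the Laurent polynomial in $x$ (of lowest degree $P_{-n}^n(u)$, highest degree $x^{n+\rho}P_\rho^n(u)$) has no root with $|x|>R$ whenever $|u|>1$. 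First I would dispose of the common factor $g$: writing $P_\ell^n=g\cdot \widetilde P_\ell^n$, the zeros of $g(xy)$ contribute the locus $\{g(xy)=0\}$, which meets $\Omega_R$ for every $R$ unless all zeros of $g$ lie in $\{|u|\le 1\}$ — this is exactly condition (1), and it is forced independently of $R$. So after this reduction I may assume $\gcd_\ell \widetilde P_\ell^n=1$, and must show that $\widetilde G_T(x,y):=\sum_\ell x^\ell\widetilde P_\ell^n(xy)$ avoids zero on $\Omega_R$ for large $R$ iff all zeros of $\widetilde P_\rho^n=P_\rho^n/g$ satisfy $|u|<1$.

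**Main step.** Fix $u$ with $|u|>1$ and view $\widetilde G_T(x,u/x)\cdot x^n=\sum_{\ell}x^{n+\ell}\widetilde P_\ell^n(u)=:H_u(x)$, a genuine polynomial in $x$ of degree $n+\rho$ with leading coefficient $\widetilde P_\rho^n(u)$ and lowest-order coefficient $\widetilde P_{-n}^n(u)$. I want: for $R$ large, $H_u$ has no root in $|x|>R$ for any $|u|>1$ — equivalently all roots of $H_u$ stay bounded uniformly in $u$ on $|u|>1$. A root of $H_u$ escaping to $\infty$ as $R\to\infty$ can only happen if the leading coefficient $\widetilde P_\rho^n(u)\to 0$, i.e. near a zero $u_0$ of $\widetilde P_\rho^n$ with $|u_0|\ge 1$. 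If such a $u_0$ exists with $|u_0|>1$, then taking $u\to u_0$ the polynomial $H_u$ loses degree and acquires a root near $\infty$, producing points of $\Omega_R$ in the zero set for all $R$ — so (2) is necessary, at least the open-disk part. The delicate case is $|u_0|=1$: I would handle it by a compactness/normal-families argument on the closed region $|u|\ge 1$ together with the point at infinity for $u$, noting that on $|u|=1$ the polynomial $H_u$ may degenerate but we only need roots of modulus $>R$, and since $|u|=1$ is a compact set not meeting the open condition we can still get a uniform bound provided $\widetilde P_\rho^n$ has no zero in the closed region — hence condition (1)+(2) together (recall (1) puts the zeros of $g$ in $|u|\le 1$, so $P_\rho^n/g$ carries the information about zeros on $|u|=1$ coming from $P_\rho^n$ itself). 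Conversely, assuming (1) and (2), I would prove the uniform bound: on the compact set $\{|u|\ge 1\}\cup\{\infty\}$ the leading coefficient $\widetilde P_\rho^n(u)$ is bounded away from $0$ (by (2) and non-degeneracy, $\deg\widetilde P_\rho^n$... — one must check $\widetilde P_\rho^n$ does not vanish at $u=\infty$, which is where non-degeneracy, $\deg P_\rho^n=n$, enters), so Cauchy-type root bounds give $|x|\le C$ uniformly, and then any $R>C$ works.

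**Expected obstacle.** The routine part is the algebra turning $G_T$ into $H_u$ and the Cauchy bound. The genuinely delicate part is the boundary $|u|=1$: there $P_\rho^n(u)/g(u)$ is allowed to vanish (condition (2) only demands zeros in the \emph{open} disk — wait, it demands $|x|<1$, i.e.\ no zeros with $|u|\ge 1$), so actually (2) forbids zeros on $|u|=1$ as well, and the real subtlety is instead at $u=\infty$ and in making the ``sufficiently large $R$'' uniform. I expect the main obstacle to be a clean treatment of the interaction between the two ends of the Laurent polynomial — the $P_{-n}^n$ end (roots near $0$, harmless since we only care about $|x|>R$) versus the $P_\rho^n$ end (roots near $\infty$) — and proving that \emph{no other} mechanism can send a root of $H_u$ to infinity as $|u|$ ranges over $\{|u|>1\}$; this is where I would invoke a Rouché/continuity argument, possibly phrased via the resultant $\operatorname{Res}_x(H_u,\cdot)$ or simply via the observation that the roots of a polynomial depend continuously on its coefficients and can only escape to $\infty$ through vanishing of the leading coefficient. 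Once that dichotomy is pinned down, both directions of the iff fall out, with non-degeneracy ensuring $\widetilde P_\rho^n(\infty)\neq 0$ and hence the uniformity.
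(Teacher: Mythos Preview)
Your approach is correct and essentially the same as the paper's: both reduce to Proposition~\ref{prop:bb09}, factor out the common divisor $g$, use boundedness of the ratios $P_\ell^n/P_\rho^n$ on $\{|u|\ge 1\}$ for sufficiency, and for necessity track a root of the $x$-polynomial escaping to $\infty$ as $u\to u_0$ with $\widetilde P_\rho^n(u_0)=0$. The paper's execution is slightly tighter---sufficiency goes via the single Rouch\'e-type estimate $\bigl|G_T/(x^\rho P_\rho^n)-1\bigr|<C/(R-1)$ rather than Cauchy root bounds on your $H_u$, and for necessity it explicitly sets $R_j=\tfrac12|x_j|(1+1/|y_j|)$ so that the escaping zero $(x_j,y_j/x_j)$ actually lands in $\Omega_{R_j}$, a verification you wave at but do not carry out (and which is exactly where the boundary case $|u_0|=1$ is handled).
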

\begin{proof}
Suppose $T : \bC_n[x] \rightarrow \bC[x]$ is a non-degenerate linear operator. We first prove that conditions (1) and (2) are sufficient for $T_n$-invariance. Indeed assume that (1) and (2) hold.   
Since  $\deg P_\ell^n \leq \deg P_\rho^n=n$ for all $j$, and the zeros of $P^n_\rho(x)/g(x)$ lie in the 
open unit disk, there is a positive constant $C$ such that $|P_\ell^n(x)/P_\rho^n(x)| < C$ 
for all $|x| \geq 1$ and all $\ell$. Hence, for sufficiently large $R$,  if $(x,y) \in \Omega_R$, then 
\begin{equation}\label{tratt}
\left| \frac {G_T(x,y)} { x^{\rho} P_\rho^n(xy)}-1\right|  =\left| \sum_{\ell=-n}^{\rho-1} x^{\ell-\rho} \frac {P_\ell^n(xy)}{P_\rho^n(xy)}  \right| \leq \sum_{\ell =-n}^{\rho-1} R^{-(\rho -\ell)} C < \frac C {R-1}<1.
\end{equation}
For such $R$, the disk $\cdisk_R = \{x : |x| \leq R\}$ is $T_n$-invariant by Proposition~\ref{prop:bb09}.

Suppose $\cdisk_R = \{x : |x| \leq R\}$ is $n$-invariant for $R$ sufficiently large. 
If $g(x)$ has a zero in $\{x : |x|>1\}$, then $G_T(x,y) = 0$ for 
some $(x,y) \in \Omega_R$, and by Proposition~\ref{prop:bb09}, the disk $\cdisk_R$ is not $T_n$-invariant.
To get a contradiction, suppose $(P_\rho^n/g)(y)=0$, where $|y|\geq 1$. 
Consider a sequence $\{y_j\}_{j=1}^\infty$, where $P_\rho^n(y_j) \neq 0$, $|y_j|>1$, 
and $\lim_{j \to \infty} y_j = y$. 
Let 
\[
B_j(x) \coloneqq x^nG_T(x,y_j/x)/g(y_j) = \sum_{\ell \leq \rho} x^{\ell+n} (P_\ell^n/g)(y_j) .
\]
Since $(P_\rho^n/g)(y)=0$ and $P_\ell^n(y)\neq 0$ for some $\ell$, we see that at least one zero, say $x_j$, of $B_j(x)$ 
tends to $\infty$ as $j \to \infty$. Hence for  
\[
R_j \coloneqq \frac12 |x_j|\left(1+\frac 1 {|y_j|}\right),
\]
 $(x_j, y_j/x_j) \in \Omega_{R_j}$, while $G_T(x_j,y_j/x_j)=0$. By Proposition~\ref{prop:bb09}, $D_{R_j}$ is not $T_n$-invariant for any  $R_j$.
\end{proof}

\medskip 

Recall that the M\"obius map $x \mapsto \frac{x}{1+x}$ sends 
the set $\{x \in \bC : \Re(x)\geq -{1}/{2} \}$ 
to the unit disk.

\begin{theorem}\label{th:15} 
For $T=\sum_{j=0}^k Q_j(x) \frac{d^j}{dx^j}$ and $n\in \mathbb{N}$, assume that $T$ is non-degenerate  as a linear operator $T : \bC_n[x] \rightarrow \bC[x]$. Let $\rho$ be the $n^\thsup$ Fuchs index of $T$, 
and $a_{\rho,j}$ be the coefficient of $x^{\rho+j}$ in $Q_j$. Then the closed disk $\cdisk_R$ is $T_n$-invariant for all sufficiently large $R$ if and only if 
\begin{enumerate}
\item all zeros of the polynomial $h:=\gcd(f_{-n}^n, f_{-n+1}^n, \ldots, f_{\rho}^n)$ have real part greater than or equal  to $-1/2$. Equivalently, there is no $\beta$ with $\Re(\beta)>1/2$, such that 
\[
\sum_{j=0}^k x^{-j}j!\binom n j Q_j(x) \beta^j \equiv 0,
\]
and,
\item all zeros of the polynomial $f^n_\rho/h$ have real part greater than $-{1}/{2}$.
\end{enumerate}
\end{theorem}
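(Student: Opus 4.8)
The plan is to reduce Theorem~\ref{th:15} to Theorem~\ref{genethm} via the M\"obius change of variables $x \mapsto \frac{x}{1+x}$. Recall from \eqref{prn} that $P_\ell^n(x) = \sum_{j=0}^k j! a_{\ell,j}\binom{n}{j} x^j(1+x)^{n-j}$, while from \eqref{fprn} the auxiliary polynomial is $f_\ell^n(x) = \sum_{j=0}^k j! a_{\ell,j}\binom{n}{j}x^j$. The key algebraic identity is
\[
P_\ell^n(x) = (1+x)^n f_\ell^n\!\left(\tfrac{x}{1+x}\right),
\]
at least after clearing the denominator $(1+x)^n$: more precisely $(1+x)^n f_\ell^n(y)\big|_{y = x/(1+x)} = P_\ell^n(x)$. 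Thus the zeros of $P_\ell^n(x)$ in the closed unit disk $\{|x|\le 1\}$ correspond, under $x \mapsto \frac{x}{1+x}$, to zeros of $f_\ell^n$ in the half-plane $\{\Re(x)\ge -1/2\}$ (together with possible zeros "at infinity" coming from a drop in degree, which I will need to track carefully); similarly zeros in the open unit disk correspond to zeros in the open half-plane $\{\Re(x) > -1/2\}$.

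First I would establish this dictionary precisely, including how the gcd behaves: since $x \mapsto \frac{x}{1+x}$ is a bijective correspondence on polynomials up to the factor $(1+x)^n$ (equivalently, up to units in the relevant localization), I claim $\gcd(\{P_\ell^n\}_\ell)$ and $\gcd(\{f_\ell^n\}_\ell) =: h$ are related by the same substitution, so that $g(x) = (1+x)^{n - \deg_? } h(x/(1+x))$ up to the precise bookkeeping of degrees, and likewise $P_\rho^n/g$ corresponds to $f_\rho^n/h$. Next I would invoke Theorem~\ref{genethm}: $\cdisk_R$ is $T_n$-invariant for all large $R$ iff (1') all zeros of $g$ lie in $\{|x|\le 1\}$ and (2') all zeros of $P_\rho^n/g$ lie in $\{|x|<1\}$. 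Translating (1') and (2') through the dictionary yields exactly conditions (1) and (2) of the present theorem: all zeros of $h$ have $\Re \ge -1/2$, and all zeros of $f_\rho^n/h$ have $\Re > -1/2$. The "equivalently" clause in (1) is just the observation that $\sum_j x^{-j} j!\binom{n}{j} Q_j(x)\beta^j \equiv 0$ in $x$ means $f_\ell^n(\beta) = 0$ for every $\ell$ (reading off the coefficient of each power $x^\ell$), i.e.\ $\beta$ is a common zero of all $f_\ell^n$, i.e.\ a zero of $h$; the sign flip ($\Re(\beta) > 1/2$ forbidden versus $\Re \ge -1/2$ required) comes from $\beta \mapsto$ the reciprocal-type substitution and should be checked against the M\"obius map normalization, but amounts to the statement that $h$ has no zero with $\Re < -1/2$.

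The main obstacle I anticipate is the degree bookkeeping at the "point at infinity" of the M\"obius transformation. The substitution $x \mapsto \frac{x}{1+x}$ sends $x = \infty$ to $1$ and $x = -1$ to $\infty$, so a zero of $f_\ell^n$ "at infinity" (i.e.\ $\deg f_\ell^n < n$, equivalently $a_{\ell,j} = 0$ for the top index) becomes a zero of $P_\ell^n$ at $x = -1$, which lies \emph{on} the unit circle, not strictly inside. One must verify this is handled consistently: a zero on $\{|x|=1\}$ is permitted for $g$ but not for $P_\rho^n/g$ in Theorem~\ref{genethm}, matching the closed-versus-open half-plane distinction in (1) versus (2) here, with the boundary point $\Re = -1/2$ corresponding to $|x| = 1$ and the point $x = -1$ corresponding to $\Re = \infty$ (vacuously on the good side). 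I would treat this by working with homogenizations or by passing to $\bC_n[x]$ and its identification with degree-$\le n$ polynomials throughout, so that "zero at infinity" is a genuine zero of the homogenization, and then the M\"obius map acts on $\mathbb{P}^1$ without exceptional behavior. Once this is set up cleanly the rest is a direct translation, and the non-degeneracy hypothesis ($\deg P_\rho^n = n$, i.e.\ $\deg f_\rho^n = n$, i.e.\ $a_{\rho,k}\ne 0$ is not quite it — rather that the top coefficient survives) ensures $P_\rho^n/g$ and $f_\rho^n/h$ have matching degrees so no spurious zero at infinity is created in the quotient.
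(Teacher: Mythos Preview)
Your approach is exactly the paper's: the authors' proof is a two-sentence reference to translating conditions (1) and (2) of Theorem~\ref{genethm} via the identities \eqref{prn}, \eqref{fprn} and the M\"obius map $x\mapsto x/(1+x)$, and you have supplied precisely that translation in detail, including the key identity $P_\ell^n(x)=(1+x)^n f_\ell^n\!\bigl(x/(1+x)\bigr)$ and the correct observation that the gcd passes through the substitution.

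One point to tighten: your stated correspondence ``zeros of $P_\ell^n$ in $\{|x|\le 1\}$ $\leftrightarrow$ zeros of $f_\ell^n$ in $\{\Re\ge -1/2\}$'' has the half-plane on the wrong side. If $u$ is a zero of $P_\ell^n$ and $v=u/(1+u)$ the corresponding zero of $f_\ell^n$, then $|u|\le 1 \iff |v|\le |1-v| \iff \Re(v)\le 1/2$. This is consistent with the ``Equivalently'' clause in condition~(1) (no common zero $\beta$ with $\Re\beta>1/2$) and with Example~\ref{exampleimp}, and you were right to flag the apparent sign discrepancy with the first sentence of condition~(1) as something to be reconciled; the computation above resolves it in favor of $\Re\le 1/2$. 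Your handling of the degree bookkeeping at $x=-1$ (the image of $\infty$) is correct and more careful than what the paper records.
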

\begin{proof}
We want to translate conditions (1) and (2) of Theorem~\ref{genethm} into this setting. This is done by \eqref{rn}, \eqref{prn}, \eqref{fprn} and suitable M\"obius transformations.


\end{proof}

\begin{example}\label{exampleimp}
Let $T= Q_1(x)\frac{d}{dx} + Q_0(x)$ be a non-degenerate linear operator of order $1$. Suppose first that 
\begin{equation}\label{klo}
Q_0(x)+n\beta x^{-1}Q_1(x) \equiv 0
\end{equation}
for some $\beta$. Then 
\[
T=P\cdot (\beta n - xD),
\]
where $P=P(x)$ is some polynomial. But then 
\[
T(x-R)^n= nP\cdot (x-R)^{n-1} ( (\beta-1)x-\beta R),
\]
has a zero outside $\{ x : |x| \leq R\}$ if and only if $|\beta/(\beta-1)|>1$ which is equivalent to $\Re\, \beta >1/2$. This explains condition (1) in Theorem~\ref{th:15}.   

Suppose \eqref{klo} is not satisfied for any $\beta$. If $\deg Q_1 > \deg Q_0+1$, then (2) is always satisfied. Suppose $\deg Q_1 = \deg Q_0+1$, and let $a_i$ be the leading coefficient of $Q_i$. The polynomial in (2) equals 
$
a_0+na_1x.
$ 
Hence condition (2) is equivalent to  $\Re\left({a_0}/{a_1}\right)<n/2$. 
\end{example} 

\begin{proposition}
Let $T : \bC_n[x] \rightarrow \bC_n[x]$ be a diagonal operator, i.e., 
\[
T(x^i) = \lambda_i x^i, \quad 0\leq i \leq n.
\]
The following conditions are equivalent: 
\begin{enumerate}
\item There is a compact non-empty $T_n$-invariant set $K\neq \{0\}$,
\item $\overline{D}_1$ is $T_n$-invariant,
\item $\overline{D}_R$ is $T_n$-invariant for all $R>0$,
\item all zeros of 
the polynomial 
\[
\sum_{i=0}^n \lambda_i \binom n i x^i
\]
lie in $\overline{D}_1$.
\end{enumerate}
\end{proposition}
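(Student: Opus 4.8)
The plan is to prove the chain of implications $(3)\Rightarrow(2)\Rightarrow(1)\Rightarrow(4)\Rightarrow(3)$, exploiting the fact that a diagonal operator is a special case of the setup in Theorem~\ref{genethm}, so that conditions (1)--(2) of that theorem become a single condition on one polynomial. First I would record that for a diagonal $T$ with $T(x^i)=\lambda_i x^i$, one has $G_T(x,y)=T[(1+xy)^n]=\sum_{i=0}^n \lambda_i \binom{n}{i}(xy)^i$, which depends only on the product $w=xy$; hence in the notation of Theorem~\ref{genethm} all the $P^n_\ell$ vanish except $P^n_0(w)=\sum_{i=0}^n \lambda_i\binom{n}{i}w^i$, so $\rho=0$, $g(x)=P^n_0(x)$ up to scalar, and $P^n_\rho/g$ is constant. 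Thus conditions (1) and (2) of Theorem~\ref{genethm} collapse to: all zeros of $\sum_{i=0}^n\lambda_i\binom{n}{i}x^i$ lie in $\overline{D}_1$. This already gives $(4)\Leftrightarrow$ "$\overline{D}_R$ is $T_n$-invariant for all sufficiently large $R$", which is most of the way to $(3)$ and $(4)$.

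The cleaner route, avoiding the "sufficiently large" caveat, is to go directly through Proposition~\ref{prop:bb09}. For diagonal $T$, $G_T(x,y)=g(xy)$ with $g(w)=\sum_{i=0}^n\lambda_i\binom{n}{i}w^i$. By Proposition~\ref{prop:bb09}, $\overline{D}_R$ is $T_n$-invariant iff $g(xy)\neq 0$ for all $(x,y)\in\Omega_R$, i.e. for all $x,y$ with $|x|>R$, $|y|>1/R$. The set of values $xy$ as $(x,y)$ ranges over $\Omega_R$ is exactly $\{w:|w|>1\}$, independently of $R$. Therefore $\overline{D}_R$ is $T_n$-invariant iff $g$ has no zeros with $|w|>1$, and this condition does not involve $R$ at all. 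This simultaneously yields $(4)\Rightarrow(3)$ and $(3)\Rightarrow(4)$ (taking $R=1$ gives $(3)\Rightarrow(2)$ trivially, and $(2)$ with $R=1$ is the $R=1$ instance which by the $R$-independence forces all $R$), and also $(2)\Rightarrow(4)$ via the same $\Omega_1$ computation. The implication $(3)\Rightarrow(2)$ is immediate, and $(2)\Rightarrow(1)$ holds because $\overline{D}_1$ is a compact non-empty set which is $\neq\{0\}$ and is $T_n$-invariant by hypothesis.

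The remaining implication is $(1)\Rightarrow(4)$, which I expect to be the main obstacle since a general compact invariant set need not be a disk. The approach: suppose $K\neq\{0\}$ is compact, non-empty, $T_n$-invariant, and suppose for contradiction that $g$ has a zero $w_0$ with $|w_0|>1$. Pick $\alpha\in K$ with $\alpha\neq 0$ (such $\alpha$ exists since $K\neq\{0\}$ and $K$ is non-empty; if $K=\{0\}$ is excluded, and if $0\notin K$ any point works). Apply $T$ to $p(x)=(x+\beta)^n$ for a suitable scalar $\beta$ chosen so that $p$ has all roots in $K$ and $T(p)$ has a root of large modulus: concretely $T[(x-\alpha)^n]$ — wait, one must be careful that $(x-\alpha)^n$ need not have all roots in $K$ unless $\alpha\in K$, which it is. Then $T[(x-\alpha)^n]=\sum_i \lambda_i\binom{n}{i}x^i(-\alpha)^{n-i}$; if $\alpha\neq0$ this equals $\alpha^n g(x/(-\alpha))$ up to sign, whose roots are $-\alpha w$ over zeros $w$ of $g$. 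In particular $-\alpha w_0\in K$ with $|-\alpha w_0| = |\alpha||w_0| > |\alpha|$. Iterating with $\alpha$ replaced by $-\alpha w_0$ produces points of modulus $|\alpha||w_0|^m\to\infty$ in $K$, contradicting compactness. This handles the case where some nonzero $\alpha\in K$; if $K=\{0\}$ it is excluded by hypothesis, and if $K$ contains only $0$... it's the same. One subtlety to check is that $g$ is not identically zero (if all $\lambda_i=0$ then $T=0$ and every set is invariant, including $\overline{D}_1$, so (4) holds vacuously as the zero polynomial has no zeros outside $\overline D_1$ — handle this degenerate case separately), and that when $\alpha\in K$, $(x-\alpha)^n$ indeed has all its (repeated) roots in $K$, which is immediate.
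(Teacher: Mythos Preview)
Your proposal is correct and follows essentially the same approach as the paper: compute $G_T(x,y)=\sum_i\lambda_i\binom{n}{i}(xy)^i$, observe via Proposition~\ref{prop:bb09} that invariance of $\overline{D}_R$ is equivalent to (4) independently of $R$ (giving $(2)\Leftrightarrow(3)\Leftrightarrow(4)$), and for $(1)\Rightarrow(4)$ produce from any nonzero $\alpha\in K$ a root of $T[(x-\alpha)^n]$ of strictly larger modulus. The only difference is that the paper takes $\zeta\in K$ of maximal modulus (which exists by compactness and is nonzero since $K\neq\{0\}$) to get a one-step contradiction, whereas you iterate to build an unbounded sequence---an equivalent but slightly less economical variant.
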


\begin{proof}
 Since the symbol of $T$ is given by 
\[
G_T(x,y)=T[(1+xy)^n] = \sum_{i=0}^n \lambda_i \binom n i (xy)^i,
\]
we see that the disk $\overline{D}_R$ is $T_n$-invariant if and only if all zeros of 
the polynomial 
\[
\sum_{i=0}^n \lambda_i \binom n i x^i
\]
lie in $\overline{D}_1$. This proves the equivalence of (2), (3) and (4). 

Suppose that (1) holds for some $K$, but not (2). Let $\zeta \in K$ be of maximal modulus. Since, the polynomial in (3) has zero outside the unit disk, the polynomial 
\[
T((x-\zeta)^n)= (-\zeta)^n\sum_{i=0}^n \lambda_i \binom n i \left(-\frac x \zeta\right)^i 
\]
has a zero outside $K$, a contradiction. 
\end{proof}

\begin{corollary}\label{cor:nonDegenerateStableDisks}
If $T$ is a non-degenerate differential operator, then there is an integer $N_0$ and a 
positive number $R_0$ such that the disk $D_R:=D(0,R)$ is $T_n$-invariant whenever $n \geq N_0$ and $R \geq R_0$. 
\end{corollary}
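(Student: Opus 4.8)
The statement follows by combining Theorem~\ref{th:15} with an asymptotic analysis of the polynomials $f^n_\ell$ as $n\to\infty$. The plan is to show that for $n$ large the two conditions of Theorem~\ref{th:15} are automatically satisfied, after which the conclusion is immediate (with $R_0$ depending on $n$; uniformity in $R\ge R_0$ is built into the ``for all sufficiently large $R$'' clause of that theorem, and one checks the threshold can be taken uniform in $n\ge N_0$ by inspecting the estimate \eqref{tratt}).

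\textbf{Step 1: reduce to the leading index.} Since $T=\sum_{j=0}^k Q_j(x)\diff{j}$ is a \emph{non-degenerate differential operator}, its ordinary Fuchs index $\fidx=\fidx_T$ is realized by $Q_k$, i.e.\ $\fidx=\deg Q_k-k$. First I would check that for all sufficiently large $n$ the $n^{\thsup}$ Fuchs index $\rho=\rho_n$ equals $\fidx$, and moreover that $\rho_n$ is realized \emph{only} by $j=k$ in the sense that $a_{\rho,k}\neq 0$ while the contributions of the other $j$'s to $f^n_\rho$ are of lower order in $n$. Concretely, from \eqref{fprn},
\[
f^n_\rho(x)=\sum_{j=0}^k j!\,a_{\rho,j}\binom{n}{j}x^j,
\]
and since $\binom nj\sim n^j/j!$, the coefficient of $x^j$ grows like $a_{\rho,j}\,n^j$. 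As $a_{\rho,k}\neq 0$ (this is exactly non-degeneracy, $\deg Q_k-k=\fidx=\rho$), the top coefficient $a_{\rho,k}n^k$ dominates, so $\deg f^n_\rho=k$ for $n\gg 0$.

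\textbf{Step 2: locate the zeros of $f^n_\rho$ and of $h$.} Rescaling $x\mapsto x/n$, the polynomial $n^{-k}k!^{-1}f^n_\rho(x/n)$ converges coefficientwise to $a_{\rho,k}x^k$ as $n\to\infty$ (after normalizing; more precisely $f^n_\rho(x/n)=\sum_j j!a_{\rho,j}\binom nj n^{-j}x^j\to\sum_j a_{\rho,j}x^j/\text{(stuff)}$, whose top term $a_{\rho,k}x^k$ swamps the rest once we divide by $n^k$). Hence by continuity of roots all $k$ zeros of $f^n_\rho$ are $o(n)$; in fact they all lie in a disk of radius $C$ independent of $n$. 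In particular, for $n$ large every zero $\beta$ of $f^n_\rho$ satisfies $\Re\beta>-1/2$ — which is more than condition~(2) of Theorem~\ref{th:15} requires for $f^n_\rho/h$, since dividing by $h$ only removes zeros. The same rescaling argument applies to every $f^n_\ell$ simultaneously, so the zeros of $h=\gcd(f^n_{-n},\dots,f^n_\rho)$ (a divisor of $f^n_\rho$) also all have real part $>-1/2\ge -1/2$, giving condition~(1). I would also need the degenerate-looking edge case $k=0$ (exactly when $Q_k$ is a nonzero constant and $\fidx=-k=0$ forces $k=0$) — but a non-degenerate differential operator of order $k\ge1$ has $\deg Q_k\ge k\ge 1$, so $h$ and $f^n_\rho$ are genuine nonconstant polynomials and there is nothing pathological; when $f^n_\rho$ is itself a nonzero constant the conditions hold vacuously.

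\textbf{Step 3: conclude and extract uniformity.} With both conditions of Theorem~\ref{th:15} verified for all $n\ge N_0$, that theorem yields, for each such $n$, a radius so that $\cdisk_R$ is $T_n$-invariant for all larger $R$. To get a single $R_0$ working for all $n\ge N_0$, I would revisit the proof of Theorem~\ref{genethm}: the key estimate \eqref{tratt} shows $\cdisk_R$ is invariant as soon as $C/(R-1)<1$, where $C$ bounds $|P^n_\ell(x)/P^n_\rho(x)|$ for $|x|\ge1$. Using \eqref{prn} and the zero-location bounds from Step~2 (all uniform in $n$ after the $x\mapsto x/n$ rescaling), one shows $C$ can be chosen independent of $n\ge N_0$, hence $R_0:=C+2$ works uniformly. \textbf{The main obstacle} is precisely this last point — proving that the constant $C$ in \eqref{tratt}, i.e.\ the uniform bound on the ratios $P^n_\ell/P^n_\rho$ on $|x|\ge1$, does not blow up with $n$. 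This requires controlling not just where the zeros of $P^n_\rho$ are but that they stay uniformly inside $\{|x|<1\}$ after the relevant M\"obius transformation, equivalently (by Step~2) that the zeros of $f^n_\rho/h$ have real part bounded \emph{away} from $-1/2$ uniformly in $n$; this follows from the coefficientwise convergence $n^{-k}f^n_\rho(x/n)\to (\text{const})x^k$ together with the fact that the limit has all its zeros at the origin, which maps to the center of the disk — but making the ``uniformly away'' quantitative is the technical heart of the argument.
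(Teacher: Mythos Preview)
Your proposal is correct and follows essentially the same route as the paper. Both arguments hinge on the observation that, by non-degeneracy, $a_{\rho,k}\neq 0$, so the zeros of $f^n_\rho$ tend to $0$ as $n\to\infty$; this yields a bound $C$ on $|P^n_\ell/P^n_\rho|$ for $|x|\ge 1$ that is \emph{uniform in $n$}, after which the estimate \eqref{tratt} gives $R_0=C+1$ working for all $n\ge N_0$.

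Two minor remarks. First, your detour through verifying conditions (1)--(2) of Theorem~\ref{th:15} is unnecessary: since you must revisit \eqref{tratt} anyway to extract the uniform $R_0$, the paper simply goes there directly and shows the constant $C$ is uniform in $n$ (which subsumes the zero-location conditions). Second, your rescaling computation in Step~2 is slightly garbled --- with the substitution $x\mapsto x/n$ one gets $f^n_\rho(x/n)\to\sum_j a_{\rho,j}x^j$ (zeros $O(n)$ after un-rescaling), whereas the statement you actually want, and which the paper uses, is the un-rescaled convergence $n^{-k}f^n_\rho(x)\to a_{\rho,k}x^k$ coefficientwise, which shows all zeros of $f^n_\rho$ tend to $0$. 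You gesture at this in your parenthetical, so the idea is there; just drop the rescaling.
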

\begin{proof}
Note that the zeros of $f_n^T$ approach $0$ as $n \to \infty$. 
Since $a_{\rho,k} \neq 0$, we see by \eqref{prn} that there exist positive numbers  $N_0$ and $C$ such that 
\begin{itemize} 
\item $|P_\ell^n(x)/P_\rho^n(x)| < C$ 
for all $|z| \geq 1$, all $\ell$, and all $n \geq N_0$, 
\item $\sum_{j=0}^k j!a_{\rho,j}\binom n j \neq 0$,
\item the zeros of $f_n^T$ are in $D(0,1/2)$. 
\end{itemize}
Hence the estimate in \eqref{tratt} can be made uniform in $n$. Indeed, we can choose $R_0=C+1$.  
\end{proof}

\begin{remark}\label{inrem}
Note that by item (2) of Theorem~\ref{th:generalGeN}, if $T$ is a linear operator and $\Omega \subseteq \bC$ is closed and unbounded,
then $\Omega$ is $T_n$-invariant if and only if it is $T_\ell$-invariant for all $\ell \leq n$. 
Indeed if $f(z)$ has degree $\ell \leq n$ we may take a sequence $\{w_j\}_{j=1}^\infty$ in $\Omega$
for which $|w_j| \to \infty$ as $j \to \infty$. Then the zeros of 
\[
T(f) = \lim_{j \to \infty} T[ (1-{x}/{w_j})^{n-\ell}f(z)]
\]
is in $\Omega$ by Hurwitz' theorem. 
\end{remark}

The following important notion can be found in \cite[Def. 1]{BorceaBranden2009}. 
\begin{definition}\label{def:stablePoly}
A polynomial $f(z_1,\dotsc,z_\ell) \in \bC[z_1,\dotsc,z_\ell]$ is called \defin{stable} if 
for all $\ell$-tuples $(z_1,\dotsc,z_\ell) \in \bC^\ell$ with $\Im(z_j) > 0$, $1 \le j \le \ell$, 
one has $f(z_1,\dotsc,z_\ell)\neq 0$. 
\end{definition}

\begin{proposition}\label{prop:halfPlaneCharaterization}
Take a closed half-plane given by $H= \{ a x +b : \Im\, x \leq 0\}$, $(a,b)\in \bC^2,\, a\neq 0$ 
and let $T = \sum_{j=0}^k Q_j(x) \diff{j}$ be a differential operator. 
Then the  following facts are equivalent: 
\begin{enumerate}
\item The set of positive integers $n$ for which $H$ is $T_n$-invariant is unbounded,
\item $H$ is $T_n$-invariant for all $n \geq 0$, 
\item The polynomial $\sum_{j=0}^k Q_j(ax+b)(-y/a)^j$ considered as an element in $\bC[x,y]$ is a stable polynomial in $(x,y)$.
\end{enumerate}
\end{proposition}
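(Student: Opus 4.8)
The plan is to prove the chain of implications $(2)\Rightarrow(1)$ (trivial), $(1)\Rightarrow(3)$, and $(3)\Rightarrow(2)$, using the half-plane analogue of Proposition~\ref{prop:bb09} together with a normalization that turns $H$ into the standard lower half-plane. First I would reduce to the case $(a,b)=(1,0)$, i.e.\ $H=\{x:\Im x\le 0\}$: the affine change of variables $x\mapsto ax+b$ conjugates $T$ to an operator $\widetilde T$ whose coefficient polynomials are $\widetilde Q_j(x)=a^{-j}Q_j(ax+b)$ (up to the chain-rule constants), and under this conjugation "$H$ is $T_n$-invariant" becomes "the lower half-plane is $\widetilde T_n$-invariant". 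The polynomial in condition (3), $\sum_j Q_j(ax+b)(-y/a)^j$, is exactly the symbol-type polynomial attached to $\widetilde T$ after this normalization, so it suffices to prove the three conditions are equivalent for $\widetilde T$ and the lower half-plane.

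Next I would invoke the Borcea--Br\"and\'en characterization of half-plane preservers (the half-plane analogue of Proposition~\ref{prop:bb09}, from \cite{BorceaBranden2009}): for a linear operator on $\bC_n[x]$, the closed lower half-plane is $T_n$-invariant if and only if the polynomial $G_T^{H}(x,y):=T[(x+y)^n]$ (symbol in the additive model) is stable, equivalently nonvanishing when $\Im x>0$ and $\Im y>0$. Writing $T=\sum_{j=0}^k Q_j(x)\diff{j}$ gives
\[
T[(x+y)^n]=\sum_{j=0}^k j!\binom{n}{j}Q_j(x)(x+y)^{n-j},
\]
and I would compare this with the bivariate polynomial $F(x,y):=\sum_{j=0}^k Q_j(x)(-y)^j$ appearing in (3). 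The key algebraic point is that $T[(x+y)^n]$ and $F(x,y)$ are related by a "polarization / substitution" operation that preserves stability: stability of $F$ in $(x,y)$ should be equivalent to stability of $T[(x+y)^n]$ for all $n$, because $T[(x+y)^n]$ is obtained from $F$ by the substitution coming from the generating identity $\diff{j}(x+y)^n = j!\binom{n}{j}(x+y)^{n-j}$, and one can pass between the two using that multiplication by a stable polynomial, specialization, and the Weyl-algebra symbol all interact well with the half-plane property.

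For the implication $(1)\Rightarrow(3)$ I would argue by contradiction: if $F$ is not stable, pick $(x_0,y_0)$ with $\Im x_0>0,\ \Im y_0>0$ and $F(x_0,y_0)=0$; then I want to produce, for infinitely many $n$, a point in the complement of the half-plane that is a root of $T(p)$ for some $p$ with all roots in $H$ — concretely, taking $p(x)=(x-\overline{y_0}\,c_n)^n$ or a similar family whose roots lie in the closed lower half-plane and letting the scaling $c_n\to\infty$, the leading asymptotics of $T(p)$ are governed by $F$, so a zero of $F$ in the open upper half-plane forces a zero of $T(p)$ there for large $n$, contradicting $T_n$-invariance for those $n$. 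This mirrors the scaling argument already used in the proof of Theorem~\ref{genethm} (the polynomial $B_j$ there plays the role of $F$ here). For $(3)\Rightarrow(2)$, stability of $F$ for all $(x,y)$ in the product of upper half-planes should directly give stability of $T[(x+y)^n]$ for every $n$ (via the stability-preserving relation above, or a direct estimate/limit as in \eqref{tratt} adapted to the half-plane), hence $T_n$-invariance of $H$ for all $n$ by the half-plane version of Proposition~\ref{prop:bb09}.

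The main obstacle I anticipate is making the passage between $F(x,y)=\sum_j Q_j(x)(-y)^j$ and the genuine symbol $T[(x+y)^n]=\sum_j j!\binom nj Q_j(x)(x+y)^{n-j}$ rigorous and stability-respecting in both directions and uniformly in $n$: the $n$-dependent binomial weights and the extra $(x+y)^{n-j}$ factor must be absorbed without destroying stability, which is exactly the kind of delicate point where one wants to cite the multiplier/symbol machinery of \cite{BorceaBranden2009} rather than re-derive it. Once that dictionary is in place, the three equivalences fall out quickly, with $(2)\Rightarrow(1)$ immediate and $(1)\Rightarrow(3)$ handled by the asymptotic-scaling contradiction sketched above.
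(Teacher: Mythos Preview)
Your reduction by the affine change $x\mapsto ax+b$ is exactly what the paper does, and your instinct that the equivalence $(2)\Leftrightarrow(3)$ ultimately rests on the Borcea--Br\"and\'en symbol theorem is correct. However, the paper's proof is much shorter than your outline and avoids both of the places where your argument is soft.

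First, for $(1)\Leftrightarrow(2)$ you are working too hard. The half-plane $H$ is a \emph{closed unbounded} set, so Remark~\ref{inrem} (equivalently item~(2) of Theorem~\ref{th:generalGeN}) applies directly: for such sets, $T_n$-invariance implies $T_\ell$-invariance for every $\ell\le n$. Hence if $H$ is $T_n$-invariant for an unbounded set of $n$, it is $T_n$-invariant for all $n\ge 0$. There is no need for your asymptotic/scaling contradiction argument $(1)\Rightarrow(3)$, which as written is not rigorous: the polynomial $F(x,y)=\sum_j Q_j(x)(-y)^j$ governs the leading behaviour only after a precise rescaling, and your proposed family $p(x)=(x-\overline{y_0}\,c_n)^n$ does not obviously produce a root of $T(p)$ in the open upper half-plane from a zero of $F$ there.

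Second, for $(2)\Leftrightarrow(3)$ the paper does not attempt to pass between $F(x,y)$ and the finite-$n$ symbols $T[(x+y)^n]$ by hand. After conjugating by $\phi(x)=ax+b$ to obtain a differential operator $S$ acting on the standard lower half-plane, the paper simply cites \cite[Theorem~1.2]{BorceaBranden2010}, which states precisely that a finite-order differential operator preserves stability if and only if its symbol polynomial is stable. That theorem already encapsulates the ``polarization/substitution preserves stability'' step you flagged as the main obstacle; you should cite it (note: the 2010 paper, not the 2009 one) rather than try to reconstruct it.

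In summary: keep your conjugation step, replace your $(1)\Rightarrow(3)$ scaling argument by the one-line observation from Remark~\ref{inrem}, and replace your symbol manipulation by a direct appeal to \cite[Theorem~1.2]{BorceaBranden2010}.
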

\begin{proof}
By Remark~\ref{inrem} we see that (1) and (2) are equivalent.
Now, (2) is equivalent to the fact  that the operator $S : \bC[x] \rightarrow \bC[x]$ defined by 
\[
S(f)(x) = T(f(\phi^{-1}(x)))(\phi(x)),
\]
where $\phi(x)= ax+b$, preserves stability. 
The operator $S$ is again a differential operator, so the equivalence of (2) and (3) now follows from \cite[Theorem 1.2]{BorceaBranden2010}. 
\end{proof}

\begin{example}
Consider the operator $T:\bC_n[x] \to \bC[x]$ given by
\begin{equation}
T = (x^2-x^3)\diff{3} + ( x + x^2 ) \diff{2} + 2x  \diffx -6.
\end{equation}
When $n=3$,  we have that for every $z \in \bC$, 
\begin{equation}
 T\left[ (x-z)^3 \right] = 12\left(x - z^2 \right) \left( x - z/2 \right).
\end{equation}
In particular, if $z$ lies in a $T_3$-invariant set, then $z^2$ is also in the set.
Thus, there are no large $T_3$-invariant disks.
However, this does not violate Theorem~\ref{genethm}: since
the $3$rd Fuchs index of $T$ is 0, but $P_\rho^n(x) = -6(1+2x)$.
Hence, the operator  $T$ is degenerate for $n=3$ and Theorem~\ref{genethm}  does not apply.
\end{example}

\subsection{Description of the limiting minimal set \texorpdfstring{$\minvset{\infty}$}{}.}

Recall that in Corollary~\ref{cor:natural}, we proved that whenever
the leading coefficient $Q_k(x)$ of an operator $T$ is has positive degree, 
then there is a minimal invariant set $\minvset{\infty}$ containing 
the convex hull of the roots of $Q_k(x)$.
Furthermore, if $T$ is non-degenerate, Corollary~\ref{cor:nonDegenerateStableDisks}
implies that $\minvset{\infty}$ is compact. The next result of the third author is the main motivation for Theorem~\ref{thm:limit}.

\begin{THEO}[{See \cite[Thm. 9]{Shapiro2010}}] \label{lm:collapse} 
Given a non-degenerate operator $T$ as in \eqref{eq:main} and $\eps>0$, 
there exists a positive integer $n_\eps$ such that for any $n>n_\eps$ 
and any polynomial $p$ of 
degree $n$ with all roots in $\Conv(Q_k)$, 
all roots of $T(p)$ lie in the $\eps$-neighborhood  of $\Conv(Q_k)$.  
\end{THEO}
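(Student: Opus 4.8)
\medskip

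The plan is to prove the equivalent statement that, for $n$ larger than an explicit threshold depending only on $T$ and $\eps$, one has $T(p)(x)\neq 0$ for every $x$ with $d(x,\Delta)\ge\eps$, where $\Delta:=\Conv(Q_k)$. Write $p(x)=c\prod_{i=1}^{n}(x-z_i)$ with all $z_i\in\Delta$. By the Gauss--Lucas theorem $p^{(k)}$ again has all its zeros in $\Delta$, and since $\Delta$ contains the zeros of $Q_k$, both $Q_k(x)$ and $p^{(k)}(x)$ are nonzero off $\Delta$. For such $x$ we may therefore factor
\[
T(p)(x)=Q_k(x)\,p^{(k)}(x)\bigl(1+E(x)\bigr),\qquad
E(x)=\sum_{j=0}^{k-1}\frac{Q_j(x)}{Q_k(x)}\cdot\frac{p^{(j)}(x)}{p^{(k)}(x)},
\]
so it suffices to establish $|E(x)|<1$ on $\{x:d(x,\Delta)\ge\eps\}$, uniformly in the choice of $p$.

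The basic tool is the identity $p^{(j)}(x)/p(x)=j!\,e_j(w)$, where $w_i=(x-z_i)^{-1}$ and $e_j$ is the $j$-th elementary symmetric function, together with the elementary comparison $\bigl|\,j!\,e_j(w)-\bigl(\textstyle\sum_i w_i\bigr)^{j}\,\bigr|\le\binom{j}{2}\,n^{j-1}(\max_i|w_i|)^{j}$, obtained by isolating the index-tuples with a repetition. Its value lies in the fact that $\sum_i w_i$ cannot be small: if $\zeta_0\in\partial\Delta$ is nearest to $x$ and $H$ is the supporting half-plane of $\Delta$ at $\zeta_0$, then after rotating so that $H=\{\Re\le c\}$ one has $\Re(x-z_i)\ge d(x,\Delta)$ for every $i$, hence $\bigl|\sum_i w_i\bigr|\ge\Re\sum_i w_i\ge n\,d(x,\Delta)/(|x|+R_\Delta)^2$, with $R_\Delta:=\max_{z\in\Delta}|z|$. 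Combining these, $|p^{(k)}(x)/p(x)|=k!\,|e_k(w)|$ is of full order $n^{k}$, whereas $|p^{(j)}(x)/p(x)|=j!\,|e_j(w)|\le(n/d(x,\Delta))^{j}$ for $j<k$; hence $|p^{(j)}/p^{(k)}|=O(n^{j-k})$, uniformly on every region $\{d(x,\Delta)\ge\eps,\ |x|\le M\}$. On such a region $|Q_j/Q_k|$ is bounded (the denominator is zero-free there), so $|E(x)|=O(1/n)<1$ for $n$ large.

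It remains to bound $E$ on $\{|x|\ge M\}$, and this is the only place non-degeneracy enters. For $|x|$ past a fixed multiple of $R_\Delta$ we expand at infinity: $\sum_i w_i=n/x+O\bigl(nR_\Delta/|x|^{2}\bigr)$, so $\bigl|\sum_i w_i\bigr|\asymp n/|x|$, and the comparison above (with the error term swallowed once $n$ exceeds a bound depending only on $k$) yields $|p^{(k)}/p|\ge c_k\,n^{k}/|x|^{k}$ while $|p^{(j)}/p|\le(2n/|x|)^{j}$, whence $|p^{(j)}/p^{(k)}|=O\bigl((|x|/n)^{k-j}\bigr)$. On the other hand non-degeneracy gives $\deg Q_j-\deg Q_k\le j-k$ for $j<k$, so $|Q_j(x)/Q_k(x)|\le C_Q\,|x|^{j-k}$ for $|x|\ge M$, with $M$ and $C_Q$ depending only on $T$. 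The two powers of $|x|$ cancel and $|E(x)|=O(1/n)$ uniformly for $|x|\ge M$. Choosing in addition $M\ge R_\Delta+\eps$, the two regimes together cover $\{d(x,\Delta)\ge\eps\}$; taking $n_\eps$ to be the maximum of the finitely many thresholds above finishes the proof.

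I expect the crux — and the main obstacle — to be the lower bound showing that $|p^{(k)}(x)/p(x)|=k!\,|e_k(w)|$ is genuinely of order $n^{k}$. A priori $e_k(w)$ could be very small, and bounding it directly from below (say, via Gauss--Lucas, as a ratio of products of the $|x-z_i|$'s) only yields something exponentially small in $n$. The way out is to recognize $j!\,e_j(w)$ as $\bigl(\sum_i w_i\bigr)^{j}$ up to an $O(n^{j-1})$ correction and to bound $\sum_i w_i$ from below by a real-part argument against a supporting half-plane of $\Delta$; since that half-plane bound decays like $1/|x|^{2}$, it is useless for large $|x|$, which is exactly why the argument must split into a bounded region and a neighbourhood of infinity, the latter handled through the Laurent expansion at $\infty$ and requiring non-degeneracy.
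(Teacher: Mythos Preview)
Your argument is correct. The paper itself does not prove Theorem~A---it is quoted from \cite{Shapiro2010} as motivation---but it does prove the closely related Theorem~\ref{th:DISCS}: any closed disk $D$ containing $\Conv(Q_k)$ at positive distance from $\partial D$ is $T_n$-invariant for all large $n$. Together with the disk-covering argument in the proof of Theorem~\ref{thm:limit}, this recovers Theorem~A (and indeed the stronger statement $\minvset{\infty}=\Conv(Q_k)$).

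The two routes are genuinely different. The paper's proof of Theorem~\ref{th:DISCS} invokes the symbol characterisation of Proposition~\ref{prop:bb09alt} (from \cite{BorceaBranden2009}), which reduces the question from \emph{all} polynomials with zeros in $D$ to the single family $G_n(x,y)=T\bigl((x-y)^n\bigr)$. For that family the derivative ratio collapses to $\frac{(n)_j}{(n)_k}(x-y)^{k-j}$, and boundedness of $\frac{Q_j(x)(x-y)^{k-j}}{Q_k(x)}$ on $D^c\times D$ follows at once from non-degeneracy and the fact that the zeros of $Q_k$ stay away from $D^c$; there is no split into bounded and unbounded regimes and no symmetric-function manipulation. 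Your approach sidesteps the symbol theorem entirely and handles an arbitrary $p$ directly, paying for this with the comparison $j!\,e_j(w)\approx\bigl(\sum_i w_i\bigr)^{j}$ and the supporting-half-plane lower bound on $\sum_i(x-z_i)^{-1}$. Your proof is therefore more elementary and self-contained, while the paper's is shorter but leans on the Borcea--Br\"and\'en machinery; as a bonus the paper's route also gives invariance of the disk itself, not merely containment of $T(p)$'s zeros in a neighbourhood of $\Conv(Q_k)$.
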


The main technical tool in the proof of Thereom~\ref{thm:limit}  is Theorem~\ref{th:DISCS} which is of independent interest. It extends the previous Theorem~\ref{genethm}. For the proof we will make use of the following alternative ``symbol theorem'' which follows from \cite[Thm.7]{BorceaBranden2009}.

\begin{proposition}\label{prop:bb09alt}
Let $T : \bC_n[x] \rightarrow \bC[x]$ be a linear operator of rank greater than one, and let $D$ be a closed disk in $\mathbb{C}$. Then $D$ is $T_n$-invariant if and only if $G_n(x,y) \neq 0$ whenever $x \in D^c$ and $y \in D$, where 
\[
G_n(x,y)= T\big((x-y)^n\big)= \sum_{i=0}^n \binom n i T(x^i)y^{n-i}.
\]
\end{proposition}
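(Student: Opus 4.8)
The plan is to deduce Proposition~\ref{prop:bb09alt} from Proposition~\ref{prop:bb09} by a change of coordinates. Recall that Proposition~\ref{prop:bb09} says that a disk $\cdisk_R$ centered at the origin is $T_n$-invariant precisely when the symbol $G_T(x,y) = T[(1+xy)^n]$ is non-vanishing on the region $\Omega_R = \{|x|>R,\ |y|>1/R\}$. First I would note that this origin-centered version already contains the information we want, once we interpret the two "halves" of $\Omega_R$ as living on opposite sides of $\partial \cdisk_R$: the condition $|x|>R$ says $x\in \cdisk_R^c$, while $|y|>1/R$ corresponds to the image of $\cdisk_R$ under the relevant projective involution. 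So the first step is to make precise the claim that $T[(1+xy)^n]\neq 0$ on $\Omega_R$ is equivalent to $T[(x-y)^n]\neq 0$ for $x\notin \cdisk_R$, $y\in\cdisk_R$, and then upgrade from origin-centered disks to arbitrary disks by applying a M\"obius/affine map.

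The cleaner route, I think, is to work directly with an arbitrary closed disk $D$. Let $\psi$ be an affine map sending $D$ to $\cdisk_1$ (possible since $D$ is a genuine disk). Conjugating $T$ by $\psi$ — that is, setting $S(f)(x) = T(f(\psi^{-1}(\,\cdot\,)))(\psi(x))$ — produces again a linear operator on $\bC_n[x]$ (indeed a differential operator if $T$ is), and $D$ is $T_n$-invariant if and only if $\cdisk_1$ is $S_n$-invariant. So it suffices to prove the statement for $D=\cdisk_1$. Now I would compute the symbol of $S$. From the definition $G_S(x,y)=S[(1+xy)^n]$, and the key elementary identity is that $(1+xy)^n$ pulled back along the substitutions implementing the involution $x\mapsto 1/x$ becomes, up to the nonvanishing factor $x^n$, a polynomial of the form $(x - w)^n$ in a new variable; concretely $x^n\cdot S[(1+ (y/x) \cdot(\cdot))^n]$ evaluated appropriately is $S[(x-y)^n]$ after the correct bookkeeping of which slot is inverted. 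The point is that the region $\Omega_1=\{|x|>1,\ |y|>1\}$ is carried by $(x,y)\mapsto (x, 1/y)$ — or the appropriate variant — onto $\{x\in \cdisk_1^c,\ y\in \cdisk_1\}$, and under this same substitution $G_S(x,y)$ becomes $G_n(x,y)=S((x-y)^n)=\sum_{i=0}^n \binom{n}{i}S(x^i)y^{n-i}$ times a unit. Hence non-vanishing of one symbol on one region is equivalent to non-vanishing of the other on the other region, and Proposition~\ref{prop:bb09} translates into the desired statement.

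Concretely, the steps in order are: (1) reduce to $D=\cdisk_1$ via affine conjugation, checking that conjugation preserves the rank-$>1$ hypothesis and the class of operators; (2) write out $G_S(x,y)=S[(1+xy)^n]$ and $G_n(x,y)=S[(x-y)^n]=\sum_i \binom ni S(x^i)y^{n-i}$ and exhibit the explicit rational change of variables relating them — essentially $(x,y)\mapsto(-x, y/x)$ or $(x, -1/y)$ up to the unit factor $(\pm x)^n$ or $(\pm y)^n$; (3) verify that this change of variables maps $\Omega_1$ bijectively onto $\{(x,y): |x|>1,\ |y|<1\} = \cdisk_1^c\times \cdisk_1$ (taking care at the boundary, which is handled by the closedness in Proposition~\ref{prop:bb09} — the disk there is closed, and its complement open); (4) conclude via Proposition~\ref{prop:bb09}. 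The main obstacle I anticipate is purely bookkeeping: getting the involution and the sign/unit factors exactly right so that "$|y|>1/R$" really does correspond to "$y\in D$" and not "$y\in D^c$", and making sure the open/closed conventions line up (Proposition~\ref{prop:bb09} uses $\cdisk_R$ and the open $\Omega_R$, while Proposition~\ref{prop:bb09alt} wants $x\in D^c$, $y\in D$). There is no deep content beyond Proposition~\ref{prop:bb09}; this is a restatement in more symmetric coordinates, so once the substitution is pinned down the proof is a short verification.
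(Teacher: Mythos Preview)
Your approach is correct and is essentially what the paper does: the paper gives no separate proof of Proposition~\ref{prop:bb09alt} but simply records that it follows from \cite[Thm.~7]{BorceaBranden2009}, which is Proposition~\ref{prop:bb09}. The substitution you are looking for is $y\mapsto -1/y$, which yields $G_T(x,-1/y)=(-1/y)^{n}\,G_n(x,y)$ and carries $\Omega_R=\{|x|>R,\ |y|>1/R\}$ onto $\{|x|>R,\ |y|<R\}$; the remaining boundary case $|y|=R$ that you flagged is handled directly from the definition of $T_n$-invariance by applying $T$ to $(x-y_0)^n$ with $y_0\in\partial D$.
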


\begin{theorem}\label{th:DISCS} Given a non-degenerate operator $$T=Q_k(x) \frac{d^k}{dx^k} +Q_{k-1}(x)\frac{d^{k-1}}{dx^{k-1}}+\cdots+Q_0(x),$$ let $D$ be any closed disk that contains $\Conv(Q_k)$, and is such that the distance between $\Conv(Q_k)$ and the boundary of $D$ is positive. Then $D$ is $T_n$-invariant for all sufficiently large degrees $n$. 
\end{theorem}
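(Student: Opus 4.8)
The plan is to reduce the statement to the disk criterion of Proposition~\ref{prop:bb09alt} and then make the estimate in the proof of Theorem~\ref{genethm} uniform in $n$, exploiting that $D$ strictly contains $\Conv(Q_k)$. Write $D=\bar D(c,R)$; after the affine change of variables $x\mapsto (x-c)/R$ (which conjugates $T$ to another differential operator of the same order whose leading coefficient has the same zero set rescaled) we may assume $D=\cdisk_1$ and that $\Conv(Q_k)\subseteq \cdisk_{1-\delta}$ for some $\delta>0$, i.e. every zero of $Q_k$ has modulus at most $1-\delta$. By Proposition~\ref{prop:bb09alt} it suffices to show that $G_n(x,y)=T((x-y)^n)\neq 0$ whenever $|x|>1\ge|y|$, for all large $n$.

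\textbf{Proof proposal.} The plan is to reduce the statement to the ``alternative symbol theorem'' (Proposition~\ref{prop:bb09alt}) and then make the sufficiency estimate from the proof of Theorem~\ref{genethm} uniform in the degree $n$, using crucially that $D$ strictly contains $\Conv(Q_k)$. Write $D=\overline{D}(c,R)$ and let $d>0$ be the distance from $\Conv(Q_k)$ to $\partial D$. Conjugating $T$ by the affine map $\psi(\xi)=R\xi+c$ replaces it by the differential operator $\widetilde T=\sum_{j}R^{-j}Q_j(R\xi+c)\,\frac{d^j}{d\xi^j}$ of the same order; this operation preserves the Fuchs index and non-degeneracy, sends $D$ to $\cdisk_1$, and sends $\Conv(Q_k)$ into $\cdisk_{1-\delta}$ with $\delta=d/R>0$, while $S$ is $T_n$-invariant if and only if $\psi^{-1}(S)$ is $\widetilde T_n$-invariant. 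Hence we may assume throughout that $D=\cdisk_1$ and that every zero of $Q_k$ has modulus at most $1-\delta$. For $n$ large the restriction $T\colon\bC_n[x]\to\bC[x]$ has rank greater than one (since $\deg T(x^j)=j+\rho_T$ for large $j$, these degrees are eventually strictly increasing), so Proposition~\ref{prop:bb09alt} reduces the problem to showing that $G_n(x,y)=T\big((x-y)^n\big)\ne 0$ whenever $|x|>1\ge|y|$, for all sufficiently large $n$.

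\textbf{Main step.} Since $\diff{j}(x-y)^n=(n)_j(x-y)^{n-j}$ with $(n)_j=n(n-1)\cdots(n-j+1)$, one has $G_n(x,y)=\sum_{j=0}^k (n)_j\,Q_j(x)\,(x-y)^{n-j}$. On $\{|x|>1\ge|y|\}$ we have $x\ne y$ and $(n)_k\ne 0$, so we may factor out $(n)_k(x-y)^{n-k}$; it then suffices to show that
\[
F_n(x,y)=Q_k(x)+\sum_{j=0}^{k-1}\frac{(n)_j}{(n)_k}\,Q_j(x)\,(x-y)^{k-j}
\]
is zero-free on $\{|x|>1\ge|y|\}$ for $n$ large. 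Write $Q_k(x)=a\prod_i(x-r_i)$ with $a\ne 0$, $|r_i|\le 1-\delta$, and put $d_k=\deg Q_k$. An elementary estimate gives $|x-r_i|\ge (\delta/2)\max(1,|x|)$ for $|x|>1$, hence $|Q_k(x)|\ge |a|(\delta/2)^{d_k}\max(1,|x|)^{d_k}$ there. For the remaining terms, non-degeneracy of $T$ means $\deg Q_j\le d_k-k+j$ for all $j$, so on $\{|x|\ge 1\}$ one has $|Q_j(x)|\le C_j\max(1,|x|)^{d_k-k+j}$, and together with $|x-y|\le 2\max(1,|x|)$ this yields $|Q_j(x)(x-y)^{k-j}|\le C_j2^{k-j}\max(1,|x|)^{d_k}$ on $\{|x|>1\ge|y|\}$. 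Consequently $|F_n(x,y)-Q_k(x)|\le \eps_n\max(1,|x|)^{d_k}$ with $\eps_n=\sum_{j=0}^{k-1}C_j2^{k-j}(n)_j/(n)_k\to 0$, because $(n)_j/(n)_k\to 0$ for each $j<k$. Choosing $n$ so large that $\eps_n<|a|(\delta/2)^{d_k}$ forces $|F_n(x,y)|>0$ on $\{|x|>1\ge|y|\}$, which completes the argument.

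\textbf{Where the difficulty lies.} The only delicate point is that $\{|x|>1\}$ is unbounded, so mere pointwise convergence $F_n\to Q_k$ does not suffice. The argument works precisely because non-degeneracy forces every subleading $Q_j$ to have degree at most $d_k-k+j$, so that each term of $F_n-Q_k$ grows no faster than $\max(1,|x|)^{d_k}$, while the strict inclusion $\Conv(Q_k)\subset D$ supplies the matching lower bound $|Q_k(x)|\gtrsim\max(1,|x|)^{d_k}$ on $D^{c}$. This balance is exactly what allows Theorem~\ref{th:DISCS} to go beyond Theorem~\ref{genethm}, which dealt only with disks centered at the origin and with $R$ large.
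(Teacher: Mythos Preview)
Your proof is correct and follows essentially the same approach as the paper's: reduce to Proposition~\ref{prop:bb09alt}, factor out $(n)_k(x-y)^{n-k}$ from $G_n(x,y)$, and show that each ratio $Q_j(x)(x-y)^{k-j}/Q_k(x)$ is uniformly bounded on $D^c\times D$ using non-degeneracy for the degree count and the strict inclusion $\Conv(Q_k)\subset D$ for the lower bound on $|Q_k|$. The paper works directly with the given disk $D$ and phrases the bound via a compactness argument rather than normalizing to $\cdisk_1$ and tracking explicit powers of $\max(1,|x|)$, but these are cosmetic differences (note also that your write-up contains two overlapping ``Proof proposal'' paragraphs that should be merged).
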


\begin{proof}

By Proposition~\ref{prop:bb09alt}, $D$ is $T_n$-invariant if the polynomial
\[
G_n(x,y)= \sum_{j=0}^k (n)_j\cdot Q_j(x)\cdot (x-y)^{n-j}
\]
is nonzero whenever $x \in D^c$ and $y \in D$. 

For fixed $j<k$ and $y \in D$, the polynomial (in $x$),
\[
\frac {Q_j(x) \cdot (x-y)^{k-j}}{Q_k(x)}
\]
is uniformly bounded on $D^c$. This is because the degree of the numerator is less than or equal to the degree of the denominator, and the zeros of $Q_k(x)$ have positive distance to $D^c$. By compactness of $D$ there is a constant $C$ such that 
\[
\left| \frac {Q_j(x) \cdot (x-y)^{k-j}}{Q_k(x)} \right | \leq C, \ \ \ \text{ for all } x \in D^c, y \in D.
\]
Hence, there is a constant $L$, independent of $n$, such that 
\[
\left |\frac {G_n(x,y)}{Q_k(x)\cdot (x-y)^{n-k} \cdot (n)_k} -1\right| < \frac L n,   \ \ \ \text{ for all } x \in D^c, y \in D.
\]
It follows that for $n$ sufficiently large, $G_n(x,y)$ is nonzero whenever $x \in D^c$ and $y \in D$. 
\end{proof} 

\begin{theorem}\label{thm:limit}
If $T$ is non-degenerate, then $M_\infty^T=Conv(Q_k)$. 
\end{theorem}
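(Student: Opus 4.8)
The plan is to prove the two inclusions $\Conv(Q_k)\subseteq\minvset{\infty}$ and $\minvset{\infty}\subseteq\Conv(Q_k)$ separately; the first is essentially free, while the second will rest on Theorem~\ref{th:DISCS} together with an elementary convex-geometry fact.

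First I would record that $\minvset{\infty}$ is legitimately defined here: since $T$ is non-degenerate and (as is implicit whenever one speaks of $\minvset{\ge n}$) $Q_k$ has positive degree, Corollary~\ref{cor:nonDegenerateStableDisks} exhibits compact disks in $\invset{\ge n}$ for all large $n$, so Corollary~\ref{cor:natural} applies and $\minvset{\infty}=\bigcap_{n}\minvset{\ge n}$ is a well-defined compact convex set, the sets $\minvset{\ge n}$ forming a decreasing chain. The inclusion $\Conv(Q_k)\subseteq\minvset{\infty}$ is then immediate from item~(3) of Theorem~\ref{th:generalGeN}: every set in $\invset{\ge n}$, in particular $\minvset{\ge n}$, contains the fundamental polygon $\Conv(Q_k)$, hence so does the decreasing intersection defining $\minvset{\infty}$.

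For the reverse inclusion I would argue as follows. Fix any closed disk $D$ with $\Conv(Q_k)\subseteq D$ and positive distance between $\Conv(Q_k)$ and $\partial D$. By Theorem~\ref{th:DISCS} there is an integer $n_D$ such that $D$ is $T_m$-invariant for every $m\ge n_D$, i.e.\ $D\in\invset{\ge n_D}$ by Definition~\ref{def:T_n}. Since $\minvset{\ge n_D}$ is the minimal element of $\invset{\ge n_D}$ it is contained in $D$, and therefore $\minvset{\infty}\subseteq\minvset{\ge n_D}\subseteq D$. Hence $\minvset{\infty}$ lies in the intersection of all closed disks that contain $\Conv(Q_k)$ with positive boundary distance, and it only remains to check that this intersection is exactly $\Conv(Q_k)$. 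For this I would use strict separation: if $p\notin\Conv(Q_k)$, then, $\Conv(Q_k)$ being compact and convex, there is a line $\ell$ with $\Conv(Q_k)$ in one open half-plane of $\ell$ at distance $d_1>0$ and with $p$ in the opposite open half-plane; the disk $D_R$ of radius $R$ internally tangent to $\ell$ from the $\Conv(Q_k)$-side lies inside the closed half-plane bounded by $\ell$ containing $\Conv(Q_k)$, hence never contains $p$, while for $R$ large it contains $\Conv(Q_k)$ and satisfies $\operatorname{dist}(\Conv(Q_k),\partial D_R)\to d_1>0$, so it is admissible. Thus each point outside $\Conv(Q_k)$ is excluded by some admissible disk, the intersection equals $\Conv(Q_k)$, and combining the two inclusions yields $\minvset{\infty}=\Conv(Q_k)$.

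I expect essentially no serious obstacle, since the analytic heart of the matter — the uniform-in-$n$ nonvanishing of the symbol $G_n(x,y)$ outside $D$ — is already Theorem~\ref{th:DISCS}. The two places to be slightly careful are: (a) exploiting that Theorem~\ref{th:DISCS} gives $T_m$-invariance for \emph{all} large $m$, which is what puts $D$ into $\invset{\ge n_D}$ rather than merely into $\invset{n_D}$; and (b) phrasing the estimate $\operatorname{dist}(\Conv(Q_k),\partial D_R)\to d_1$ uniformly over the (compact) fundamental polygon, which is routine but should be stated explicitly.
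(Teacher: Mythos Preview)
Your proof is correct and follows essentially the same approach as the paper: both arguments invoke Theorem~\ref{th:DISCS} to place $\minvset{\ge n}$ (for large $n$) inside disks that squeeze down to $\Conv(Q_k)$, with the lower bound $\Conv(Q_k)\subseteq\minvset{\infty}$ coming from Theorem~\ref{th:generalGeN}(3). The only cosmetic difference is that the paper intersects one disk per side of the polygon to build an explicit $T_{\ge n}$-invariant set $K_\epsilon\to\Conv(Q_k)$, whereas you intersect all admissible disks and handle the convex geometry via a separating-hyperplane argument; the underlying idea is identical.
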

\begin{proof}
We assume $Conv(Q_k)$ is not a line or a point. The proofs for those cases are similar. 

Let $\epsilon>0$. For each side $L$ of the polygon $Conv(Q_k)$, let $D_\epsilon(L)$ be a disc containing $Conv(Q_k)$ such that the distance between $L$ and the boundary of $D_\epsilon(L)$ is at most $\epsilon$ and at least $\epsilon/2$, see Fig.~\ref{figNN}. By Theorem~\ref{th:DISCS},  $D_\epsilon(L)$ is $T_n$-invariant for all $n \geq N(L,\epsilon)$, where $N(L,\epsilon)$ is a positive integer. But then 
\[
K_\epsilon= \bigcap_{L} D_\epsilon(L)
\]
is $T_n$-invariant for all $n \geq N(\epsilon)$, where $N(\epsilon)= \max_L N(L,\epsilon)$. Clearly $K_\epsilon \to Conv(Q_k)$.
\begin{figure}
\centering
\includegraphics[scale=0.6]{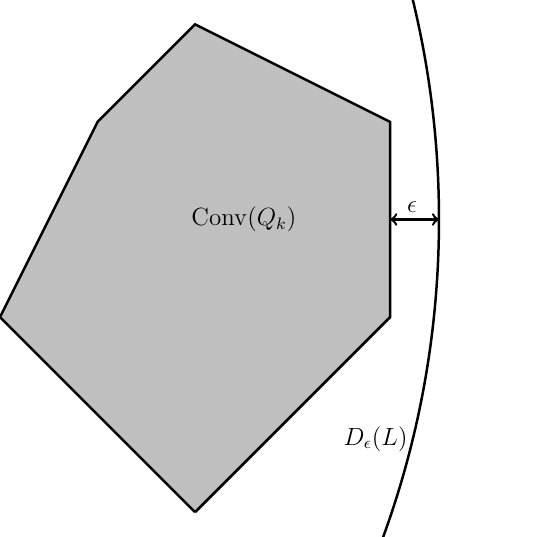}
\caption{Illustration to the proof of Theorem~\ref{thm:limit}} \label{figNN}
\end{figure}
\end{proof}

Let us now describe a special class of non-degenerate operators for which all $\minvset{\ge n},\, n=0,1,\dots , $ coincide with each other and with the fundamental polygon $\Conv(Q_k)$. 

\begin{proposition}\label{prop:Lame}
 Take a non-degenerate operator of the form $T=Q_k(x)\frac{d^k}{dx^k}+Q_{k-1}(x)\frac{d^{k-1}}{dx^{k-1}}$ satisfying the condition 
\begin{equation}\label{eq:Polya}
\frac{Q_{k-1}(x)}{Q_k(x)}=\sum_{i=1}^{\deg Q_k}\frac{\kappa_i}{x-x_i},
\end{equation}
where $\kappa_i\ge 0$ and $\{x_1,\dots, x_{\deg Q_k}\}$ is the set of all roots of $Q_k(x)$.  Then, 
\[
\minvset{}=\minvset{\ge 1}=\minvset{\ge 2}=\dots =\minvset{\infty} = \Conv (Q_k).
\]
\end{proposition}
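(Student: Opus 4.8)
The strategy is to show that $\Conv(Q_k)$ is itself $T_{\ge 1}$-invariant; since by Theorem~\ref{th:generalGeN}(3) every set in $\invset{\ge n}$ contains $\Conv(Q_k)$, and since by Corollary~\ref{cor:natural} we have the chain $\minvset{} \supseteq \minvset{\ge 1}\supseteq \dots \supseteq \minvset{\infty}$, while Theorem~\ref{thm:limit} gives $\minvset{\infty}=\Conv(Q_k)$, it will follow that all these sets coincide with $\Conv(Q_k)$. So the whole content is: \emph{for an operator of the special form \eqref{eq:Polya}, if $p$ is any polynomial of degree $\ge 1$ with all roots in $\Conv(Q_k)$, then all roots of $T(p)$ lie in $\Conv(Q_k)$ (or $T(p)=0$).}

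First I would reduce to the case where $p$ has all roots in the open set obtained by a limiting/Hurwitz argument, or else work directly with the logarithmic derivative. The key computation: write $T = Q_k(x)\frac{d^{k-1}}{dx^{k-1}}\bigl(\frac{d}{dx} + \frac{Q_{k-1}(x)}{Q_k(x)}\bigr)$ is not quite right because the coefficients don't commute; instead observe that condition \eqref{eq:Polya} is precisely the Pólya--Schur--type condition guaranteeing that the first-order operator $A := \frac{d}{dx} + \frac{Q_{k-1}(x)}{k\,Q_k(x)}(\cdot)$, or more precisely the operator $q \mapsto Q_k q' + \frac{1}{k}Q_{k-1} q$ acting suitably, sends polynomials with roots in $\Conv(Q_k)$ to polynomials with roots in $\Conv(Q_k)$. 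The cleanest route is to use the classical fact (Laguerre/Gauss--Lucas type): if $q$ has all roots in a convex set $K$ and $w\in K$, then for the "multiplier" $q'(x) + \sum \frac{\kappa_i}{x-x_i} q(x)$ with $x_i \in K$, $\kappa_i\ge 0$, all roots again lie in $K$ — this is because at a putative root $x_0\notin K$ one gets $\frac{q'(x_0)}{q(x_0)} + \sum\frac{\kappa_i}{x_0-x_i}=0$, i.e. a vanishing convex-type combination $\sum_{\text{roots }r\text{ of }q}\frac{1}{x_0-r} + \sum_i \frac{\kappa_i}{x_0-x_i}=0$ of vectors all pointing out of the half-plane separating $x_0$ from $K$, which is impossible.

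Then I would handle the full $k$-th order operator as follows: note $Q_k(x)\frac{d^k}{dx^k} + Q_{k-1}(x)\frac{d^{k-1}}{dx^{k-1}} = \frac{d^{k-1}}{dx^{k-1}}\circ R + (\text{lower order correction})$ — but to avoid the non-commuting mess, the honest approach is to use that $\frac{d^{k-1}}{dx^{k-1}}$ preserves convex sets (Gauss--Lucas, Theorem~\ref{th:generalGeN}/Rolle), so it suffices to show the \emph{second-order} operator $\tilde T := Q_k \frac{d^2}{dx^2} + Q_{k-1}\frac{d}{dx}$ has the property, and then compose. Apply $\tilde T$ to $p = \frac{d^{k-2}}{dx^{k-2}} p_0$: write $\tilde T(q) = Q_k\bigl(q' + \frac{Q_{k-1}}{Q_k} q\bigr)'\cdot$(no)… Rather: for $q$ with roots in $K=\Conv(Q_k)$, set $u = q'$; then $\tilde T(q) = Q_k u' + Q_{k-1} u = Q_k\bigl(u' + \sum\frac{\kappa_i}{x-x_i}u\bigr)$ after clearing $Q_k$. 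Since $u=q'$ has roots in $K$ by Gauss--Lucas, and then the first-order multiplier argument above shows $u' + \sum\frac{\kappa_i}{x-x_i}u$ (times $Q_k$, whose roots are the $x_i\in K$) has all roots in $K$. Iterating: $T(p) = Q_k p^{(k)} + Q_{k-1} p^{(k-1)}$; put $v = p^{(k-1)}$, which has roots in $K$, and $T(p) = Q_k(v' + \sum\frac{\kappa_i}{x-x_i}v)$, done.

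\textbf{Main obstacle.} The delicate point is the first-order multiplier lemma at the level of polynomials whose roots lie in a \emph{convex region}, not a half-plane or a line: I need that if $v$ has all roots in convex $K$, the $x_i\in K$, $\kappa_i\ge 0$, then every root of $Q_k(x)v'(x) + \frac{1}{?}Q_{k-1}(x)v(x)$ (equivalently $Q_k\cdot(v'/v + \sum\kappa_i/(x-x_i))$) lies in $K$ — including the subtlety that roots of $v$ or of $Q_k$ that are common must be tracked, and that multiplicities/degree can drop. The separating-half-plane / Gauss--Lucas argument handles this: at $x_0\notin K$ pick a half-plane $H\supseteq K$ with $x_0\notin \overline H$; every term $\frac{1}{x_0-r}$ and $\frac{\kappa_i}{x_0-x_i}$ then has argument in an open half-plane of directions, so their sum is nonzero, contradiction. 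One must also dispose of the edge cases where $\Conv(Q_k)$ is a segment or a point (stated as "similar" in Theorem~\ref{thm:limit}'s proof) — these are genuinely easier since then $Q_k$ is linear or constant. I would write the half-plane argument carefully and note the degenerate cases follow a fortiori.
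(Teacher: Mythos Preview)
Your approach is essentially the same as the paper's: reduce via Gauss--Lucas (setting $v=p^{(k-1)}$) to the first-order operator $\widetilde T=Q_k\frac{d}{dx}+Q_{k-1}$, and then run the separating-half-plane argument on $\sum_j\frac{1}{x_0-r_j}+\sum_i\frac{\kappa_i}{x_0-x_i}=0$. The only superfluous step is your appeal to Theorem~\ref{thm:limit}: once you have shown $\Conv(Q_k)\in\invset{\ge 1}$ (and $T(1)=0$ handles degree~$0$), the containment from Theorem~\ref{th:generalGeN}(3) already forces $\minvset{\ge n}=\Conv(Q_k)$ for every $n\ge 0$, and the limit statement follows immediately without invoking the harder Theorem~\ref{thm:limit}.
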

\begin{proof}
By item (3) of Theorem~\ref{th:generalGeN}, it suffices to show that under our assumptions on $T$, $\Conv(Q_k)$
is a $T$-invariant set. Moreover by Gauss--Lucas theorem, for $T=Q_k(x)\frac{d^k}{dx^k}+Q_{k-1}(x)\frac{d^{k-1}}{dx^{k-1}}$ satisfying \eqref{eq:Polya},  it suffices to show that $\Conv(Q_k)$ is $\widetilde T$-invariant where 
$\widetilde T=Q_k(x)\diffx + Q_{k-1}(x)$. 
Assume now that $p(x)$  is an arbitrary polynomial of some degree $n$ whose 
roots  $r_1, \dots , r_n$ lie in $\Conv(Q_k)$ and consider $q=\widetilde T(p)$. 
We want to show that  $q(z)\neq 0$ for any $x\in \bC \setminus \Conv(Q_k)$. 
Assume $q(x)=0$ which is equivalent to 
\begin{align}\label{eq:bla}
Q_k(x)p^\prime(x)+Q_{k-1}(x)p(x)=0 \quad \Leftrightarrow 
\quad \frac{p^\prime(x)}{p(x)} = -\frac{Q_{k-1}(x)}{Q_k(x)}.
\end{align}
The latter expression is equivalent to 
\[
\sum_{j=1}^n\frac{1}{x-r_j} = -\sum_{i=1}^{\deg Q_k}\frac{\kappa_i}{x-x_i},
\]
where $\{x_1,\dots, x_{\deg Q_1}\}$ is the set of roots of $Q_k$ and $\kappa_i\ge 0$. 
Assuming that $x \notin \Conv(Q_k)$, choose a line $L$ separating $z$ from $\Conv(Q_k)$. 
By our assumptions, $L$ separates $x$ from all $r_j$'s and all $x_i$'s.
Because of this and taking into account the signs, one can easily conclude that the 
left-hand side of the latter expression is a complex number pointing from $x$ to the 
half-plane not containing $x$ and the right-hand side does the opposite. 
Therefore, \eqref{eq:bla} can not hold if $x\notin \Conv(Q_k)$. 
\end{proof}

A special case of Proposition~\ref{prop:Lame} when $Q_k(x)$ is a real-rooted polynomial
follows from more general results of \cite{Branden2011}.

\section{Exactly solvable and degenerate operators: basic facts}\label{sec:degenerate}

\subsection{Preliminaries on exactly solvable operators} 

In this section we will need the following information, see e.g. \cite{Bergkvist2007}.   

\medskip
Given an exactly solvable operator $T$, observe that  for each non-negative integer $j$, 
 \begin{equation}\label{eq:spectrum}
 T(x^j)=\la_j^T x^j+\text{ lower order terms}.
 \end{equation}
   Define the  \defin{spectrum}  of an exactly solvable $T$  as the sequence  $\La^T \coloneqq \{\la_j^T\}_{j=0}^\infty$ of complex 
 numbers.

\begin{LEMMA}[See \cite{MassonShapiro2001}]\label{lm:eigenpolys}

For any exactly solvable operator $T$ and any sufficiently large positive integer $n$,
there exists a unique (up to a constant factor) eigenpolynomial $p_n^T(x)$ of $T$ of degree $n$.
Additionally, the eigenvalue of $p_n^T$ equals $\la_n^T$, where $\la_n^T$ is given by \eqref{eq:spectrum}.
\end{LEMMA}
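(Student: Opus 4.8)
The plan is to extract the statement directly from the structure of an exactly solvable operator. Recall that for an exactly solvable $T$ we have $T(x^j)=\lambda_j^T x^j + (\text{lower order terms})$ with $\lambda_j^T = \sum_{i=0}^k q_{i,i}\,(j)_i$, where $q_{i,i}$ is the coefficient of $x^i$ in $Q_i(x)$ and $(j)_i$ is the falling factorial. This is a polynomial in $j$ of degree $k$ (with leading coefficient $q_{k,k}\neq 0$, which is nonzero precisely because $T$ is exactly solvable with nonconstant leading term), hence $\lambda_j^T \neq \lambda_i^T$ for $i\neq j$ once $i,j$ are large enough; more precisely there is some $N$ beyond which all $\lambda_n^T$ for $n\ge N$ are pairwise distinct and distinct from all $\lambda_j^T$ with $j<N$ as well. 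I would first fix such an $N$ and restrict to $n\ge N$.

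\medskip
\noindent
First I would establish \textbf{existence}. Since $T$ is exactly solvable, $T$ maps $\bC_n[x]$ (polynomials of degree $\le n$) into itself: writing $T$ in the monomial basis $1,x,\dots,x^n$, it is represented by an upper-triangular matrix with diagonal entries $\lambda_0^T,\dots,\lambda_n^T$. The characteristic polynomial of this matrix is $\prod_{j=0}^n (\lambda - \lambda_j^T)$, so $\lambda_n^T$ is an eigenvalue; pick an eigenvector, i.e.\ a nonzero polynomial $p$ with $T(p)=\lambda_n^T p$. It remains to check that $\deg p = n$: expand $p=\sum_{j\le m} c_j x^j$ with $c_m\neq 0$ and $m\le n$; comparing the coefficient of $x^m$ on both sides of $T(p)=\lambda_n^T p$ gives $\lambda_m^T c_m = \lambda_n^T c_m$, so $\lambda_m^T = \lambda_n^T$, which by the choice of $N$ forces $m=n$. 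Hence $\deg p = n$ and its eigenvalue is $\lambda_n^T$, giving the second assertion for free.

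\medskip
\noindent
Next, \textbf{uniqueness up to scalar}. Suppose $p$ and $\tilde p$ are both degree-$n$ eigenpolynomials with eigenvalue $\lambda_n^T$ (note any degree-$n$ eigenpolynomial must have this eigenvalue, by the leading-coefficient argument just used). Consider the $\lambda_n^T$-eigenspace of $T$ acting on $\bC_n[x]$. Since the matrix of $T$ is triangular with distinct diagonal entries $\lambda_0^T,\dots,\lambda_n^T$, it is diagonalizable and each eigenspace is one-dimensional; in particular the $\lambda_n^T$-eigenspace is spanned by a single polynomial, which we have shown has degree exactly $n$. Therefore $p$ and $\tilde p$ are proportional. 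One can alternatively argue directly: a suitable linear combination $p - c\tilde p$ has degree $<n$ and is still an eigenvector with eigenvalue $\lambda_n^T$, but by the leading-coefficient comparison any eigenvector of degree $m<n$ has eigenvalue $\lambda_m^T\neq\lambda_n^T$, a contradiction unless $p - c\tilde p = 0$.

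\medskip
\noindent
I do not expect a genuine obstacle here; the only point requiring a little care is making precise ``sufficiently large $n$'', i.e.\ pinning down $N$ so that $\lambda_n^T$ ($n\ge N$) avoids all earlier eigenvalues — this is immediate from $\lambda_j^T$ being a nonconstant polynomial in $j$, since such a polynomial takes any given value only finitely often. Everything else is linear algebra of triangular matrices. If one wants to avoid even invoking diagonalizability, the direct leading-coefficient arguments above suffice for both existence and uniqueness.
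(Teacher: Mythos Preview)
The paper does not prove this lemma; it is quoted from \cite{MassonShapiro2001}. Your argument is the standard one (triangularity of $T$ in the monomial basis, which the paper itself invokes in the Remark immediately following the lemma) and is correct, with one small caveat. You assert that $q_{k,k}\neq 0$ ``precisely because $T$ is exactly solvable with nonconstant leading term'', but exact solvability only means $\max_i(\deg Q_i - i)=0$; this maximum need not be attained at $i=k$, so one can have $\deg Q_k<k$ and hence $q_{k,k}=0$. What your argument actually requires is only that $j\mapsto\la_j^T$ be a \emph{nonconstant} polynomial in $j$ (equivalently $q_{i,i}\neq 0$ for some $i\ge 1$), so that each value is attained finitely often; the paper uses exactly this just below the lemma when it asserts $|\la_m^T|\to\infty$. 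With that in hand, both your existence and uniqueness arguments go through verbatim. Your alternative uniqueness argument via leading coefficients is the cleaner of the two you give: the appeal to diagonalizability is not quite justified as stated, since the low-index diagonal entries $\la_0^T,\dots,\la_{N-1}^T$ may coincide among themselves, so the full matrix of $T$ on $\bC_n[x]$ need not have distinct diagonal entries.
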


One can easily show that for any exactly solvable operator $T$, the sequence  $\{|\la_m^T|\}$ is  monotone increasing to $+\infty$ which implies that for any 
sufficiently large positive integer $m$, $\vert \la_j^T\vert < \vert \la_m^T\vert$ for $0\le j<m$. 

\begin{remark} 
In addition to Lemma~\ref{lm:eigenpolys}, observe that for any exactly solvable operator $T$ as in \eqref{eq:main}
and any non-negative integer $n$, $T$ has a basis of eigenpolynomials in the linear space $\bC_n[x]$ consisting 
of all univariate polynomials of degree at most $n$.  This follows  immediately from e.g., 
the fact that $T$ is triangular in the monomial basis $\{1,x,\dots, x^n\}$. 
In other words, even if $T$ has a multiple eigenvalue it has no Jordan blocks. However,  the eigenpolynomial in the respective degree is no longer unique.  
A simple example of such situation occurs for $T=x^k\frac{d^k}{dx^k}$ in which case any polynomial of degree less than $k$ lies in the kernel. 
\end{remark}  

In what follows, we will use the following result.

\begin{proposition}\label{pr:rootsEigenpolys}
Given an exactly solvable operator $T$ as in \eqref{eq:main} and any invariant 
set $S\in \invset{\ge n}$, one has that $S$ must contain the union of all roots of the 
eigenpolynomial $p_m^T$ satisfying two conditions:  $n\le m$ and $\vert \la_j^T \vert< \vert \la_m^T\vert$ where $0\le j<m$. 
The latter fact implies that $S$ contains the union of all roots of all eigenpolynomials of sufficiently large degrees. 
\end{proposition}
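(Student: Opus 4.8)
The plan is to exploit the fact that, up to a scalar, $p_m^T$ is characterized as the unique degree-$m$ eigenpolynomial of $T$ (Lemma~\ref{lm:eigenpolys}), and then to realize it as a limit of iterates of $T$ applied to a convenient polynomial with roots in $S$. Fix $S\in\invset{\ge n}$ and an integer $m\ge n$ with $|\la_j^T|<|\la_m^T|$ for all $0\le j<m$. Pick any polynomial $p$ of degree exactly $m$ with all roots in $S$; for instance $p(x)=(x-\alpha)^m$ for some $\alpha\in S$ will do, or more flexibly any product of linear factors $(x-\alpha_i)$ with $\alpha_i\in S$. Expand $p$ in the eigenbasis of $T$ in $\bC_m[x]$, which exists by the remark following Lemma~\ref{lm:eigenpolys} (even when eigenvalues repeat, there are no Jordan blocks): write $p=\sum_{i=0}^m c_i q_i$, where $q_i$ is an eigenpolynomial with eigenvalue $\mu_i$, and where we may take $q_m=p_m^T$. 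The key point is to arrange that the coefficient $c_m$ of $p_m^T$ is nonzero; since $p$ has degree exactly $m$ and every $q_i$ with $i<m$ has degree strictly less than $m$ (the $q_i$ can be taken triangular in the monomial basis), the leading coefficient forces $c_m\neq 0$ automatically.

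Now iterate: $T^N(p)=\sum_{i=0}^m c_i\mu_i^N q_i$. Because $|\mu_m|=|\la_m^T|$ strictly dominates $|\mu_i|=|\la_i^T|$ for every $i<m$ with $c_i\neq 0$ (this is exactly the hypothesis $|\la_j^T|<|\la_m^T|$ for $0\le j<m$, together with the standard fact that $\{|\la_j^T|\}$ is strictly increasing, so no coincidence of moduli can occur among the relevant indices), we get
\[
\frac{T^N(p)}{c_m\mu_m^N}\longrightarrow p_m^T
\]
uniformly on compact sets as $N\to\infty$. Since $S\in\invset{\ge n}$ and $\deg T^j(p)=m\ge n$ for every $j$ (an exactly solvable operator preserves degree in all sufficiently high degrees; if $m$ is large enough this is automatic, and for the finitely many small $m$ one checks directly, or simply restricts the claim to large $m$ as the statement allows), each $T^N(p)$ has all its roots in $S$, or is identically $0$ — but it is not $0$ since $c_m\mu_m^N\neq 0$. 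By Hurwitz's theorem, every root of the limit $p_m^T$ is a limit of roots of $T^N(p)$, hence lies in the closed set $S$. Therefore all roots of $p_m^T$ belong to $S$. Taking the union over all admissible $m$ gives the first assertion, and since the moduli $|\la_m^T|$ are strictly increasing to $+\infty$, the condition $|\la_j^T|<|\la_m^T|$ for $0\le j<m$ holds for all sufficiently large $m$, giving the final sentence.

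The main obstacle I anticipate is bookkeeping around degenerate eigenvalues and low-degree anomalies: one must be careful that the expansion $p=\sum c_i q_i$ uses eigenpolynomials of distinct degrees $0,1,\dots,m$ (so that $c_m\neq 0$ is forced and so that "degree $<m$ eigenpolynomials" really are dominated), and that the dominance $|\mu_m|>|\mu_i|$ genuinely holds for all $i<m$ that appear with nonzero coefficient — which is precisely why the hypothesis is phrased as $|\la_j^T|<|\la_m^T|$ for all $0\le j<m$ rather than just $j$ in the "support" of $p$. A secondary point is ensuring $T$ preserves degree along the orbit $p,T(p),T^2(p),\dots$; this is where exact solvability is used, and one should either assume $m$ large enough that $\la_m^T\neq 0$ (equivalently $T$ is degree-preserving on $\bC_m[x]$ modulo the kernel), or note that the leading coefficient of $T^N(p)$ is $(\la_m^T)^N$ times that of $p$, which is nonzero for large $m$. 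Everything else is the standard power-iteration/Hurwitz argument.
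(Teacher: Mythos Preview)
Your proposal is correct and follows essentially the same approach as the paper: expand a degree-$m$ polynomial with roots in $S$ in the eigenpolynomial basis, iterate $T$, use the strict dominance $|\la_m^T|>|\la_j^T|$ to see that the normalized iterates converge to $p_m^T$, and conclude by closedness of $S$ (the paper does not name Hurwitz explicitly but uses the same root-convergence reasoning). Your extra care about $c_m\neq 0$ and degree preservation along the orbit is well placed and matches what the paper leaves implicit.
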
 

\begin{proof}
Indeed, as we mentioned above that  the sequence $\{\vert \la_n^T\vert \}$ will be strictly 
increasing to $+\infty$ starting from some positive integer $\kappa^T$. Choose some $m\ge n$ such 
that $m\ge \kappa^T$ which implies that $\vert \la_m^T \vert> \vert \la_j^T\vert$ for $0\le j<m$ and that $\{p_0^T, p_1^T, \dots, p_m^T\}$ isa basis in 
the space $\bC_m[x]$ of all polynomials of degree at most $m$. 

Pick a polynomial $q$ of degree $m$ whose roots belong 
to $S$ and expand it as $q(x)=\sum_{j=0}^m a_j p_j^T(x)$ with $a_m\neq 0$. 
Repeated application of $T$ to $q$ gives
\begin{equation}\label{eq:iter}
T^{\circ \ell}(q)=\sum_{j=0}^m a_j \la_j^\ell p_j^T(x)=\la_m^\ell\sum_{j=0}^ma_j\left(\frac{\la_j}{\la_m}\right)^\ell p_j^T(x).
\end{equation} 
Since $S \in \invset{\ge n}$, all roots of $T^{\circ \ell}(q)$ belong to $S$. 
By our assumption and disregarding the common factor $\la_m^\ell$,  the polynomial 
in the right-hand side of \eqref{eq:iter} equals $a_m p_m^T(x)$ plus some 
polynomial of degree smaller than $m$ whose coefficients tend to $0$ as $\ell\to \infty$.  
 Since $a_m\neq 0$ the roots of the polynomials 
in the right-hand side of \eqref{eq:iter} tend to those of $p_m^T$ implying 
that the latter roots must necessarily belong to $S$. 
\end{proof}

\subsection{Preliminaries on degenerate operators} 

An important although not very complicated result about degenerate operator 
which partially follows from our previous considerations is as follows.  

\begin{proposition}\label{pr:degen}
 If $T$ is a degenerate operator, 
then for any non-negative $n$, every set in $\invset{\geq n}$ is unbounded and, 
therefore is $T$-invariant.  
\end{proposition}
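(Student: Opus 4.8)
The goal is to show that every $S\in\invset{\ge n}$ is unbounded when $T$ is degenerate, the $T$-invariance then following at once from item (2) of Theorem~\ref{th:generalGeN}. The plan is to argue by contradiction: suppose $S\in\invset{\ge n}$ is bounded, hence compact (it is closed by definition), and let $\zeta\in S$ be a point of maximal modulus, $R:=|\zeta|=\max_{z\in S}|z|$. I would then exhibit a polynomial $p$ of degree $\ge n$ with all roots in $S$ whose image $T(p)$ has a root strictly outside the disk $\cdisk_R$, contradicting $S\subseteq\cdisk_R$.

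\textbf{Key steps.} First, recall the defining feature of degeneracy: the Fuchs index $\fidx_T=\max_j(\deg Q_j-j)$ is \emph{not} attained by the leading term, i.e.\ there is some $j_0<k$ with $\deg Q_{j_0}-j_0=\fidx_T>\deg Q_k-k$. The natural test polynomial is $p(x)=(x-\zeta)^{m}$ for large $m$ (if $n$ forces a higher degree, take $m\ge n$). Using the computation already run in the proof of item (3) of Theorem~\ref{th:generalGeN},
\[
\frac{T(p(x))}{(x-\zeta)^{m-k}}=\sum_{j=0}^k (m)_j\,Q_j(x)\,(x-\zeta)^{k-j}.
\]
The crucial point is to track which term dominates as $m\to\infty$ on the scale of the \emph{degree in $x$}: the $j$-th summand is a polynomial in $x$ of degree $(\deg Q_j - j)+k = \fidx_T + k$ exactly when $j$ realizes the Fuchs index, and of strictly smaller degree otherwise; among the Fuchs-index-realizing indices, $j_0<k$ has the largest Pochhammer factor $(m)_{j_0}$ relative to... no — rather, one normalizes by $(m)_{j_0}$ (or by whichever realizing index has the top-order coefficient that survives) and sees that $T(p)$ has degree $\fidx_T+m$ for all large $m$, with the top-degree coefficient a nonzero constant independent of $\zeta$. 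Thus $T(p)$ genuinely picks up $\fidx_T>\deg Q_k - k$ extra roots beyond the $m$ roots it would have had; the location of these "new" roots is controlled by the coefficient of $x^{m-k}$ in $Q_k(x)(x-\zeta)^{k}$ versus the lower-order $Q_{j}$ terms.

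\textbf{Locating a root outside $\cdisk_R$.} Here is where I would be careful. Write $T(p)(x)=c\,x^{m+\fidx_T}+\cdots$; then the product of its roots has modulus roughly $|T(p)(0)/c|$, and the sum of its roots (coefficient of $x^{m+\fidx_T-1}$) grows linearly in $|\zeta|$ because $p=(x-\zeta)^m$ contributes $-m\zeta$ to that sum. By making $m$ large (and, if necessary, translating so that $\zeta$ is not at a special configuration), the barycenter of the roots of $T(p)$ stays within $O(1)$ of $\zeta$ while at least $\fidx_T\ge 1$ of the roots must escape to $\infty$ as $m\to\infty$ — because the "extra" root-generating mechanism is driven by a fixed lower-order operator term while the bulk of the mass sits near $\zeta$. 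Concretely: the polynomial $\sum_j (m)_j Q_j(x)(x-\zeta)^{k-j}$ has, after dividing by its dominant Pochhammer coefficient, $k-j_0\ge 1$ roots that tend to the roots of $Q_{j_0}$-related data but $\fidx_T$ roots that run off to infinity, and those give roots of $T(p)$ of modulus $>R$ for $m$ large. This contradicts $S\in\invset{\ge n}$.

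\textbf{Main obstacle.} The delicate part is not the existence of escaping roots — that is forced by the degree jump $\deg T(p)-\deg p=\fidx_T\ge 1$ being strictly larger than what the leading term $Q_k$ contributes — but rather ensuring cleanly that at least one escaping root has modulus \emph{strictly exceeding} $R=\max_{z\in S}|z|$, uniformly as $m\to\infty$, rather than merely leaving every fixed compact set along some subsequence. I expect this to be handled by a Hurwitz/Rouché argument: rescale $x=m^{\alpha}u$ for the appropriate $\alpha>0$ so that in the $u$-variable the normalized $T(p)$ converges to an explicit limit polynomial of degree $\fidx_T$ (built from the Fuchs-index-realizing coefficients) whose roots are nonzero and finite; pulling back, these become roots of $T(p)$ of modulus $\sim m^{\alpha}\to\infty$, hence eventually $>R$. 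Once that is in place the contradiction is immediate, and the final sentence — every set in $\invset{\ge n}$ is then $T$-invariant — is exactly item (2) of Theorem~\ref{th:generalGeN}.
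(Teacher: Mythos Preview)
Your approach is fundamentally sound and genuinely different from the paper's. The paper does \emph{not} analyze $T((x-\zeta)^m)$ directly; instead it first reduces to the case of a degenerate \emph{exactly solvable} operator (by composing with $\frac{d^{\rho}}{dx^{\rho}}$ when $\rho>0$, or multiplying by $(x-a)^{-\rho}$ when $\rho<0$), and then invokes an external asymptotic result of Bergkvist on the eigenpolynomials $p_m^T$: their root of largest modulus tends to infinity, and by Proposition~\ref{pr:rootsEigenpolys} every $S\in\invset{\ge n}$ must contain all these roots. Your route is more self-contained --- it needs no eigenpolynomial machinery --- and in fact recovers precisely the ``escaping roots'' phenomenon the paper later develops via Newton polygons in Section~\ref{sec:trop}.

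That said, your exposition wanders. The ``barycenter / product of roots'' discussion and the proposed rescaling $x=m^{\alpha}u$ are detours; the clean argument is already implicit in what you wrote and in the proof of item~(3) of Theorem~\ref{th:generalGeN}. Namely: the polynomial
\[
\psi_m(x):=\frac{T((x-\zeta)^m)}{(x-\zeta)^{m-k}}=\sum_{j=0}^k (m)_j\,Q_j(x)(x-\zeta)^{k-j}
\]
has degree $\rho_T+k$ for all large $m$ (the leading coefficient is $\sum_{j:\deg Q_j-j=\rho_T}(m)_j a_j$, a nonzero polynomial in $m$). Normalize by $(m)_k$, not by $(m)_{j_0}$: then $\psi_m(x)/(m)_k\to Q_k(x)$ uniformly on compact sets. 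Hurwitz now gives exactly $\deg Q_k$ roots of $\psi_m$ converging to those of $Q_k$, so the remaining $(\rho_T+k)-\deg Q_k=\rho_T-(\deg Q_k-k)\ge 1$ roots must leave every compact set as $m\to\infty$. Choosing $m\ge n$ large enough produces a root of $T((x-\zeta)^m)$ outside $\cdisk_R\supseteq S$, a contradiction. (This also covers the case $Q_k$ constant: then all $\rho_T+k\ge 1$ roots escape.) There is no residual ``delicate part''.
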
 

\begin{proof} 
We only need to show the unboundedness  since $T$-invariance follows from the unboundedness by item~\upshape{(2)}
of Theorem~\ref{th:generalGeN}.
Let us start with the special case of degenerate exactly solvable operators.  
(These operators and their invariant sets are the main object of study of our sequel paper \cite{AlBrSh2}.) 

Any exactly solvable operator $T$ preserves the degree of a generic polynomial it acts upon 
and has a unique (up to a constant factor) eigenpolynomial $p_n^T(x)$ of any sufficiently 
large degree $n$, see Lemma~\ref{lm:eigenpolys} and  \cite[Lemma 1]{Bergkvist2007}.
Moreover,  if $r_n$ denotes the maximum of 
the absolute value  of the roots of $p_n(x)$, then for any degenerate exactly 
solvable $T$, $\lim_{n\to \infty} r_n=+\infty$, see \cite[Theorem 1]{Bergkvist2007}.

By Proposition~\ref{pr:rootsEigenpolys}  for any exactly solvable operator $T$, any set $S\in \invset{\geq n}$ must 
contain the union of all roots of all eigenpolynomials $p_m^T(x)$ for all sufficiently large $m$, we conclude that 
any such $S$ is necessarily unbounded. 


Assume now that $T$ has a positive 
Fuchs index $\rho \coloneqq \rho_T>0$. Consider the operator $T^\prime=\frac{d^{\rho}}{dx^{\rho}}\circ T$.
If $T$ is degenerate, then $T^\prime$ is a degenerate exactly solvable operator. 
By the Gauss--Lucas theorem, every $S\in \invset{\geq n}$ belongs to $\mathcal {I}_{\geq n}^{T^\prime}$. 
Since every subset $S^\prime \in \mathcal{I}_{\ge n}^{T^\prime}$ is unbounded by the above argument, 
we have settled the case $\rho>0$. 

Assume finally, that $T$ is a degenerate operator with $\rho<0$. 
Consider a family of operators 
\[
T^\prime_a = (x-a)^{-\rho}\cdot T, 
\]
where 
$a\in \bC$.  
Since under our assumptions, $-\rho$ is a positive integer,  $T_a$ is a degenerate exactly solvable operator for any $a$. 
Given  $S\in \invset{\geq n}$, choose $a\in S$. Then $S\in  \mathcal{I}_{\ge n}^{T^\prime_a}$ 
and is therefore unbounded by the previous reasoning. 
\end{proof}

\section {(Tropical) algebraic preliminaries and three types of Newton polygons}\label{sec:trop} 

In our study of invariant sets for degenerate operators we will need some classical results about root 
asymptotics of bivariate polynomials in the spirit  of modern tropical geometry, see \cite[Section 38, ]{Chebotarev1948} and   \cite[Ch.~4]{Walker1950}.  These results will be used in \S~\ref{sec:deg}. 

\medskip
We start by introducing the domination partial order on points in $\bR^2$, 
Namely, we say that a point $p=(u,v)\in \bR^2$ \defin{dominates} a point $p^\prime=(u^\prime, v^\prime)$ 
if $u\ge u^\prime$ and $v\ge v^\prime$. 
Given a subset $S\subseteq \bR^2$, we call by its \defin{northeastern border} $\NE_S$ 
the set of all points in $S$ which are not dominated by other points in $S$. 
Observe that $\NE_S$ can be empty if $S$ is non-compact, but for compact $S$, $\NE_S$ is always nonempty. 
Furthermore, if $S$ is both compact and convex then $\NE_S$ is contractible.

Given a bivariate polynomial $R(u,v)=\sum_{(i,j)\in \Theta} a_{i,j}u^iv^j$, 
denote by $Conv(R)\subset \bR^2$ its \textcolor{blue}{\emph{Newton polygon}}, i.e. the convex hull of the set of exponents $(i,j)\in \Theta$. 
The northeastern border  of $Conv(R)$ will be  denoted by $\NE_R$, see examples in Figure~\ref{fig:neBoundary} and
Figure~\ref{fig:newtonCases}.
By the above, $\NE_R$ is connected and contractible. 
The point of $\NE_R$ with the maximal value of $u$ will be called the \defin{eastern vertex} and denoted by $V_{e}$ and 
the point of $\NE_R$ with the maximal value of $v$ will be called the \defin{northern vertex} and denoted by $V_{n}$. 
 The set $\NE_R$ coincides with a point if and only if $V_{e}=V_{n}$. 
 Notice that every edge of the boundary of $Conv(R)$ included in $\NE_R$ has a negative slope. 
 Finally, denote by $R^{ne}(u,v)$ the restriction of $R(u,v)$ to the subset $\Theta^{ne}\subseteq \Theta$ consisting of all 
 monomials whose exponents are the vertices of $\NE_R$.  
 We will call $R^{ne}(u,v)$ the \defin{northeastern part} of $R(u,v)$. 
 
 \begin{remark} 
 Observe that for any bivariate $R(u,v)$ and $\al, \beta \in \bC$, the change of variables of the form $u=\tilde u +\al, v=\tilde v +\beta$ 
 does not change neither $\NE_R$  nor $R^{ne}(u,v)$.  
 \end{remark}
 
 Given an arbitrary bivariate polynomial 
 \[
 R(u,v)=\sum_{(i,j)\in \Theta} a_{i,j}u^iv^j=\sum_{j=0}^mR_j(v)u^j
 \]
 and some number $w\in \bC$, denote by $\mathcal U_R(w)$ the set of zeros of the equation $R(u,w)=0$ in the variable $u$ 
 considered as the divisor in $\bC$, i.e. zeros are counted with multiplicities. 
 Here  $m$ is the degree of $R$ w.r.t.\ $u$. 
 Assume that the parameter $w$ runs over the portion of the positive half-axis $[\kappa,+\infty)$ which contains no 
 root of $R_m(v)$; one can always choose $\kappa$ sufficiently large 
 so that the latter condition is satisfied. (Obviously, for all $w\in [\kappa,+\infty)$, the degree of the divisor $\mathcal U_R(w)$ equals $m$.) 
 We define the subdivisor $\mathcal U^\infty_R(w)\subset \mathcal U_R(w)$ as the  set of all roots $u(w)$
 whose absolute values tend to $\infty$ when $w$ tends to $+\infty$ along the positive half-axis. 
 Notice that $\mathcal U^\infty_R(w)$ is well-defined for all sufficiently large positive $\widetilde \kappa> \kappa$, since 
 there exists $\widetilde \kappa$ such that for any $w\in [\widetilde \kappa, +\infty),$ the absolute value 
 of every root in $\mathcal U^\infty_R(w)$ will be strictly larger than the absolute value of any root in 
 the complement $\mathcal U_R(w) \setminus \mathcal U^\infty_R(w)$. 
 
 Our next goal is to describe  $\mathcal U^\infty_R(w)$ in terms of $R^{ne}(u,v)$. 
 In what follows we will frequently use the following statement.

Given an arbitrary bivariate polynomial $R(u,v)$ whose $\NE_R$ is not a single point, decompose $\NE_R$ into the (disjoint) 
union of consecutive edges $\NE_R=\cup_{s=1}^h e_s$ covering $\NE_R$ from north to east. 
That is $e_1$ starts at $V_{n}$, $e_h$ ends at $V_{e}$, and each $e_s$ is adjacent to $e_{s+1}$, see Figure~\ref{fig:neBoundary}.
The absolute values of the slopes of $e_1,\dotsc, e_h$ are strictly increasing. 
The following statement can be easily deduced from the known results of \cite[Section 38, Th.~63--66]{Chebotarev1948}, 
and \cite[Ch.~4, Sections 3 and 4]{Walker1950}. 
(To use the latter results, one has to substitute $u$ and $v$ by $u^{-1}$ and $v^{-1}$ respectively.) 

\begin{proposition}\label{pr:Newton}  
 The degree of the  divisor $\mathcal U^\infty_R(w)$ is equal to $i_{e}-i_{n}$ where $V_{e}=(i_{e},j_{e})$ 
 and $V_{n}=(i_{n},j_{n})$. In other words, $\deg\;  \mathcal U^\infty_R(w)$ equals 
 the length of the projection of $\NE_R$ onto the $u$-axis.  
 
 Additionally, $\mathcal U^\infty_R(w)$ splits into $h$ subdivisors $\mathcal U^\infty_1(w),\dots, \mathcal U_h^\infty(w)$ 
 corresponding to the edges $e_1,\dots, e_h$ respectively; the degree of $\mathcal U^\infty_s(w),\;s=1,\dots, h$ equals  
 the length of the projection of $e_s$ on the $u$-axis. All zeros in the divisor $\mathcal U^\infty_s(w)$ 
 have the asymptotic growth $u\sim \eps w^{sl_s}$ where $sl_s$ is the absolute value of the slope of $e_s$.

Possible values of $\eps$ can be found by substituting $ \eps w^{sl_s}$ in the 
restriction of $R(u,v)$ to the monomials contained in the edge $e_s$ and finding the non-vanishing roots of this restriction.  
\end{proposition}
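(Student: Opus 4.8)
The plan is to deduce Proposition~\ref{pr:Newton} from the classical Newton--Puiseux theory of branches of an algebraic function tending to a finite point, in the form recorded in \cite[Section 38, Th.~63--66]{Chebotarev1948} and \cite[Ch.~4, Sections 3--4]{Walker1950}, using exactly the substitution advertised in the text. Concretely, set $N=\deg_u R$ and $M=\deg_v R$ and introduce the honest polynomial
\[
\widehat R(U,V)\coloneqq U^N V^M\, R\!\left(\tfrac1U,\tfrac1V\right)=\sum_{(i,j)\in\Theta} a_{i,j}\,U^{N-i}V^{M-j}.
\]
Then $\Conv(\widehat R)$ is the image of $\Conv(R)$ under the central reflection $(p,q)\mapsto(N-p,M-q)$. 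A central reflection is an isometry in each coordinate and preserves the slope of every line, so it carries $\NE_R$ bijectively onto the \emph{southwestern} border of $\Conv(\widehat R)$, reversing the order of the edges, sending each $e_s$ to an edge $\widehat e_s$ of the same absolute slope $sl_s$, and preserving the length of the projection onto the first coordinate axis; moreover $R^{ne}$ corresponds monomial-by-monomial to the restriction of $\widehat R$ to the vertices of that southwestern border.

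Next I would translate the objects of interest. For $w>0$ put $W=1/w$; then $u$ is a root of $R(u,w)=0$ with $|u|\to\infty$ as $w\to+\infty$ if and only if $U=1/u$ is a root of $\widehat R(U,W)=0$ with $U\to0$ as $W\to0^+$, and this correspondence respects multiplicities, hence identifies the divisor $\mathcal U^\infty_R(w)$ (and each putative piece) with the corresponding divisor of small roots of $\widehat R$. Now apply the classical theory: the branches $U(W)\to0$ as $W\to0$ are governed by the southwestern part of $\Conv(\widehat R)$; their total number, with multiplicity, equals the length of the projection of that part onto the $U$-axis; they split into groups indexed by the edges $\widehat e_s$, the group attached to $\widehat e_s$ having size equal to the projection length of $\widehat e_s$ onto the $U$-axis; and each branch in that group satisfies $U\sim\widehat\eps\,W^{\,sl_s}$, where $\widehat\eps$ runs over the \emph{nonzero} roots of the one-variable polynomial obtained by substituting $U=\widehat\eps\,W^{\,sl_s}$ into the restriction of $\widehat R$ to $\widehat e_s$ and cancelling the common power of $W$ (a zero value would belong to a steeper edge and is already accounted for there). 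Undoing the substitution, $U\sim\widehat\eps\,W^{\,sl_s}$ becomes $u\sim\eps\,w^{\,sl_s}$ with $\eps=1/\widehat\eps\neq0$, and since the reflection matches $\widehat e_s$ with $e_s$ monomial-by-monomial, the admissible $\eps$ are precisely the nonvanishing roots of the restriction of $R$ to the monomials on $e_s$. Summing projection lengths over all $\widehat e_s$ gives the projection length of $\NE_R$ onto the $u$-axis, which is $i_{e}-i_{n}$, with the per-edge count equal to the projection length of $e_s$; and when $\NE_R$ is a single point one has $i_{e}=i_{n}$ and there is nothing to prove. This is the whole statement.

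The one point needing real care -- the main obstacle -- is the bookkeeping that turns the classical statements, which are usually phrased for the local branches of $\widehat R=0$ near a point of $\bC^2$ in terms of the lower boundary of a Newton diagram, into the clean ``length of a projection'' count on $\NE_R$: one must verify that the edges $\widehat e_s$ visible from the direction governing $W\to0^+$ are exactly the southwestern ones, that the $\widehat\eps$-families attached to distinct edges are genuinely disjoint (automatic since distinct slopes force distinct leading exponents), and that the small roots of $\widehat R$ attached to horizontal or vertical boundary edges -- those not in the southwestern border -- contribute no branch tending to $0$ as $W\to0^+$. A secondary technical point is that $w$ runs along the real ray $[\widetilde\kappa,+\infty)$ rather than in a punctured complex neighbourhood; but every $sl_s$ is rational, the Puiseux expansions converge on a full punctured disc in $W$, so their restriction to the positive real ray is unambiguous and the asymptotics persist, using the uniform separation between $\mathcal U^\infty_R(w)$ and $\mathcal U_R(w)\setminus\mathcal U^\infty_R(w)$ already arranged by the choice of $\widetilde\kappa$.
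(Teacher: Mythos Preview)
Your proposal is correct and follows exactly the route the paper indicates: the paper does not give a detailed proof but simply states that the result ``can be easily deduced from the known results of \cite[Section~38, Th.~63--66]{Chebotarev1948} and \cite[Ch.~4, Sections~3 and~4]{Walker1950}'' after substituting $u$ and $v$ by $u^{-1}$ and $v^{-1}$, and you have fleshed out precisely this substitution and the resulting dictionary between $\NE_R$ and the southwestern border of $\widehat R$. Your additional care about disjointness of the edge families, the bookkeeping of projection lengths under the central reflection, and the restriction to the positive real ray in $w$ is appropriate and does not depart from the paper's intended argument.
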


\begin{definition} \label{def:imp} 
Given an arbitrary bivariate polynomial $R(u,v)$ whose northeastern border  $\NE_R$ is not a single point, 
we will call the slopes of edges in $\NE_R$ the \defin{characteristic exponents} of $R(u,v)$.  
For a given edge $e_s\in \NE_R$, all possible values of $\eps$ corresponding to the 
restriction of $R(u,v)$ to this edge will be called the \defin{leading constants} corresponding 
to (the characteristic exponent of) $e_s$. 
The union of all leading constants of $R(u,v)$ will be denoted by $\Upsilon_R$.
\end{definition}

\begin{example}\label{ex:R} 
To illustrate Proposition~\ref{pr:Newton} and Definition~\ref{def:imp},
take 
\[
R(u,v)= u^8+u^7v^2 +u^5v^4+(5+7\sqrt{-1})u^3v^6-23uv^7.
\]
One can easily check that all monomials in $R(u,v)$ belong to $\NE_R$ which consists of three 
edges $e_1,e_2, e_3$ connecting $(1,7)$ with $(3,6)$,  $(3,6)$ with $(7,2)$, and $(7,2)$ with $(8,0)$ resp.  
(The exponent  $(5,4)$ of the second monomial belongs to $e_2$.) 
Degree of  $\mathcal U^\infty_R(w)$  equals $8-1=7$. Restriction $R_1(u,v)$ of $R(u,v)$ to $e_1$ 
is given by $(5+7\sqrt{-1})u^3v^6-23uv^7$. 
Its nontrivial zeros with respect to the variable $u$ are given by $(5+7\sqrt{-1})u^2-23w=0$. 
Thus for two roots from $\mathcal U_1^\infty(w)$, $u\sim \eps w^{1/2}$ where $\eps$ are the two roots 
of the equation $(5+7\sqrt{-1})\eps^2-23=0$. They are approximately equal to $-1.45392\pm 0.748212$. 
(The absolute value of the slope of $e_1$ equals $\frac{1}{2}$.) 
 Restriction $R_2(u,v)$ of $R(u,v)$ to $e_2$ is given by $u^7v^2+u^5v^4+(5+7\sqrt{-1})u^3v^6$. 
 Its nontrivial zeros with respect to the variable $u$ are given by $u^4+u^2w^2+(5+7\sqrt{-1})w^4=0$; 
 we have substituted $w$ instead of $v$ here to keep our notation. 
 Thus for $4$ different roots belonging to $\mathcal U_2^\infty(w)$,  we have $u\sim \eps w$ where $\eps$ are the four 
 roots  of the equation $\eps^4+\eps^2+5+7\sqrt{-1}=0$. 
 These are approximately equal to  $-1.22651\pm 0.961446\sqrt{-1}$ and $-0.809831\pm 1.58673 \sqrt{-1}$. 
 (The absolute value of the slope of $e_2$ equals $1$.) 
 Finally,  the restriction $R_3(u,v)$ of $R(u,v)$ to $e_3$ is given by $u^8+u^7v^2$ which gives $u=-v^2$. 
 (The absolute value of the slope of $e_3$ equals $2$.) 
 Summarizing, we get that  $\Upsilon_R$ consists of $6$ complex numbers 
 approximately given by $\{-1, -1.22651\pm 0.961446\sqrt{-1}, -0.809831\pm 1.58673 \sqrt{-1}, -1.45392\pm 0.748212\}$. 
 Its convex hull contains $0$ as its interior point.
\end{example}

\begin{figure}[!ht]
\begin{tikzpicture}[baseline=(current bounding box.center)]
\draw[step=1em, lightgray, very thin] (0,0) grid (9em, 9em);
\draw[x=1em,y=1em,black,->] (0,0)--(9,0);
\draw[x=1em,y=1em,black,->] (0,0)--(0,9);
\begin{scope}
\draw[black, very thick,x=1em,y=1em] (1,7)--(3,6)--(5,4)--(7,2)--(8,0);
\fill[black,x=1em,y=1em] (1,7) circle (0.15em);
\fill[black,x=1em,y=1em] (3,6) circle (0.15em);
\fill[black,x=1em,y=1em] (5,4) circle (0.1em);
\fill[black,x=1em,y=1em] (7,2) circle (0.15em);
\fill[black,x=1em,y=1em] (8,0) circle (0.15em);
\node[x=1em,y=1em] at (0,9.5) {v};
\node[x=1em,y=1em] at (9.5,0) {u};
\node[x=1em,y=1em] at (1.2,7.8) {$V_n$};
\node[x=1em,y=1em] at (8.0,-1.0) {$V_e$};
\node[x=1em,y=1em] at (2.5,7.0) {$e_1$};
\node[x=1em,y=1em] at (5.7,4.5) {$e_2$};
\node[x=1em,y=1em] at (8.2,1.4) {$e_3$};
\end{scope}
\end{tikzpicture}
 \caption{The northeastern border of the Newton polygon 
of 
 $R(u,v)= u^8+u^7v^2 +u^5v^4+(5+7\sqrt{-1})u^3v^6-23uv^7$, see Example~\ref{ex:R}. (The Newton polygon itself is obtained by adding an edge connecting $V_n$ with $V_e$.)
 }\label{fig:neBoundary}
\end{figure}
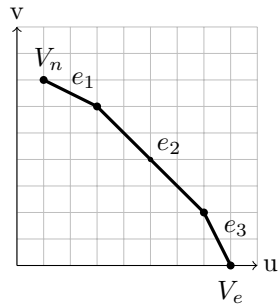

\begin{corollary}\label{cor:internal} In the above notation, for a given bivariate polynomial $R(u,v)$, the family of  convex hulls of $ \mathcal U^\infty_R(w)$ converges to  $\bC$ when $w\to +\infty$ if and only if the convex hull of $\Upsilon_R$ contains $0$ as its interior point.
\end{corollary}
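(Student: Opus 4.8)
The plan is to combine Proposition~\ref{pr:Newton} (which describes the asymptotic locations of the escaping roots $\mathcal U^\infty_R(w)$ in terms of the edges of $\NE_R$ and their leading constants) with an elementary geometric fact about convex hulls of sets that are scaled by large factors along rays. First I would make the asymptotics precise: by Proposition~\ref{pr:Newton}, every zero in the divisor $\mathcal U^\infty_R(w)$ belongs to one of the subdivisors $\mathcal U^\infty_s(w)$, $s=1,\dots,h$, and a zero in $\mathcal U^\infty_s(w)$ has the form $u(w) = \eps\, w^{sl_s}(1+o(1))$ as $w\to+\infty$, where $\eps \in \Upsilon_R$ is one of the leading constants attached to the edge $e_s$ and $sl_s>0$ is the corresponding characteristic exponent. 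Thus, after dividing by a suitable positive real normalizing factor (or just tracking directions), the points of $\mathcal U^\infty_R(w)$ accumulate, in the limit $w\to+\infty$, onto the rays $\{t\eps : t\ge 0\}$ for $\eps \in \Upsilon_R$, with the moduli along each ray tending to $+\infty$.

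Next I would prove the two directions.

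For the ``if'' direction: suppose $0$ is an interior point of $\Conv(\Upsilon_R)$. Then there exist finitely many leading constants $\eps_1,\dots,\eps_r \in \Upsilon_R$ and positive weights summing to $1$ with $\sum_i \lambda_i \eps_i = 0$, and moreover $0$ lies in the interior of $\Conv(\eps_1,\dots,\eps_r)$, so these constants positively span $\bC$: every point of $\bC$ is a nonnegative combination of the $\eps_i$. Fix any $z_0\in\bC$ and any $\delta>0$; I want to show $z_0 \in \Conv(\mathcal U^\infty_R(w))$ for all sufficiently large $w$. Since the $\eps_i$ positively span $\bC$, write $z_0 = \sum_i c_i \eps_i$ with $c_i\ge 0$. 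For each $i$ pick a zero $u_i(w)\in\mathcal U^\infty_R(w)$ asymptotic to $\eps_i w^{sl_{s(i)}}$; then $u_i(w)/|u_i(w)| \to \eps_i/|\eps_i|$ and $|u_i(w)|\to\infty$. Because the moduli of all these roots tend to $+\infty$ (and one can, if needed, also use a root asymptotic to $-\eps_i w^{sl}$ or simply any root near direction $\eps_i$ with modulus at least any prescribed bound), the convex hull of $\{u_1(w),\dots,u_r(w)\}$ eventually contains a disk of arbitrarily large radius around $0$: indeed a convex hull of points lying (asymptotically) on rays that positively span the plane, all of large modulus, contains a neighborhood of $0$ whose radius grows with the minimal modulus. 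Hence for large $w$, $z_0\in \Conv(\{u_i(w)\}) \subseteq \Conv(\mathcal U^\infty_R(w))$. Since $z_0$ and $\delta$ were arbitrary, $\Conv(\mathcal U^\infty_R(w)) \to \bC$ in the appropriate sense.

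For the ``only if'' direction: suppose $0$ is \emph{not} an interior point of $\Conv(\Upsilon_R)$. Then there is a closed half-plane $H = \{z : \Re(\bar\eta z)\ge 0\}$ for some unit vector $\eta$ with $\Upsilon_R \subseteq H$, i.e.\ $\Re(\bar\eta\,\eps)\ge 0$ for all $\eps\in\Upsilon_R$. By the asymptotic description, every root $u(w)\in\mathcal U^\infty_R(w)$ satisfies $\Re(\bar\eta\, u(w)) \ge -\,o(|u(w)|)$, so for large $w$ all of $\mathcal U^\infty_R(w)$ lies in the slightly enlarged half-plane $\{z : \Re(\bar\eta z) \ge -1\}$ (say). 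The point $-2\eta \notin$ this half-plane, hence $-2\eta \notin \Conv(\mathcal U^\infty_R(w))$ for all large $w$, so the convex hulls cannot converge to all of $\bC$. This completes the equivalence.

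\textbf{Main obstacle.} The routine-looking but genuinely delicate point is the quantitative claim in the ``if'' direction: that finitely many points of large modulus lying near a spanning set of rays have convex hull containing a ball around $0$ of radius growing to infinity. One must handle the $o(1)$ errors in the directions uniformly and make sure that when $0$ is interior to $\Conv(\Upsilon_R)$ one can always select actual roots (not merely limiting directions) realizing a positively spanning subfamily — which is fine because each $\eps\in\Upsilon_R$ is genuinely realized by some root of $\mathcal U^\infty_R(w)$ for all large $w$. Formalizing ``$\Conv(\mathcal U^\infty_R(w)) \to \bC$'' (e.g.\ as: every compact set is eventually contained in the hull) and verifying it survives these perturbations is where the care is needed; the tropical input from Proposition~\ref{pr:Newton} does all the real work.
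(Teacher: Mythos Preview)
Your ``if'' direction is correct and is precisely the argument the paper sketches (the paper's proof is only a one-sentence sketch of this direction): once $0$ lies in the interior of $\Conv(\Upsilon_R)$, the escaping roots run off to infinity along a set of directions that positively spans $\bC$, and the convex hull of such points eventually contains any prescribed disk.

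In the ``only if'' direction, which the paper's sketch does not treat, there is a genuine gap. From $u(w)=\eps\,w^{sl}(1+o(1))$ with $\Re(\bar\eta\,\eps)\ge 0$ you conclude that $\Re(\bar\eta\,u(w))\ge -1$ for large $w$. This inference breaks down whenever some $\eps\in\Upsilon_R$ lies on the supporting line, i.e.\ $\Re(\bar\eta\,\eps)=0$: then $\Re(\bar\eta\,u(w))=o(w^{sl})$, and nothing prevents this quantity from tending to $-\infty$. Concretely, take $R(u,v)=(u^2-v)^2+iu^3$. The northeastern border is the single edge through $(4,0),(2,1),(0,2)$; the edge restriction is $(u^2-v)^2$, so $\Upsilon_R=\{1,-1\}$ and $0$ is \emph{not} an interior point of $\Conv(\Upsilon_R)=[-1,1]\subset\bR\subset\bC$. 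Yet a Puiseux computation gives the four roots of $R(u,w)=0$ as $u=\pm\sqrt{w}\pm c_{\pm}\,w^{1/4}+\cdots$ with $\Im c_{\pm}\ne 0$, so their imaginary parts diverge to $\pm\infty$ and $\Conv(\mathcal U^\infty_R(w))\to\bC$ after all. Thus your half-plane bound is valid only when $0$ lies \emph{strictly outside} $\Conv(\Upsilon_R)$ (then $\Re(\bar\eta\,\eps)\ge\delta>0$ uniformly and $\Re(\bar\eta\,u(w))\to+\infty$); the boundary case $0\in\partial\,\Conv(\Upsilon_R)$ is not controlled by the leading asymptotics alone, and the ``only if'' as literally stated needs either an extra hypothesis or a finer analysis of the next Puiseux terms.
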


\begin{proof} (Sketch) 
This statement is rather obvious since if $0$ is an interior point of the convex hull  of $\Upsilon_R$, then the roots in $\mathcal U^\infty_R(w)$ will be asymptotically moving to infinity when $w\to +\infty$ in the directions prescribed by all values of $\epsilon\in \Upsilon_R$  and their convex hull will contain the  disk of any given radius  centered at $0$ for sufficiently large $w$.  
\end{proof}

Let us fix a connected contractible piecewise linear curve $\NE\subset \bR^2$ with 
integer vertices consisting of pairwise non-dominating points, see Figure~\ref{fig:neBoundary}.
In other words, $\NE$ is a piecewise linear path with integer vertices whose edges have 
negative slopes whose absolute values increase when moving down along the path. 
Denote by $Pol(\NE)$ the set of  all bivariate polynomials whose northeastern border coincides with a given $\NE$. 
(In particular, we assume that all coefficients at the corners/endpoints of $\NE$ are non-vanishing. 
$Pol(\NE)$ is a Zariski-open subset of a finite-dimensional linear space of bivariate polynomials.) 
Recall that the \defin{integer length} of a closed straight interval $I \subset\bR^2\supset \bZ^2$ 
is the number of points from $\bZ^2$ contained in $I$, i.e. the number of integer points belonging to $I$.

\begin{definition}\label{def:newtonCases} 
Given $\NE\subset \bR^2$ as above, we call it 

\begin{enumerate}[label={\upshape(\roman*)}]
\item 
 \defin{defining} if there exists an edge in $\NE$ with the 
slope $-\al/\beta$ where $\alpha$ and $\beta$ are coprime positive integers and $\beta \ge 3$; 

\item 
\defin{almost defining} 
if there are no edges as in \textrm{(i)}, but there either

\begin{enumerate}
\item 
 exists at least 
one edge in $\NE$ with the  slope $-\al/2$ and whose integer length  is larger than $2$,  or 

\item 
there exist at least two edges with the  slope $-\al/2$ and integer length at least $2$;
\end{enumerate}

\item 
\defin{non-defining}
in the remaining case i.e., when either all edges of $\NE$ have negative integer slopes or all edges but one 
have negative integer slopes  and the remaining edge  has a 
negative half integer slope and integer length $2$. 
\end{enumerate}

\end{definition}

\begin{figure}[!ht]
\begin{tikzpicture}[baseline=(current bounding box.center)]
\draw[step=1em, lightgray,opacity=0.2, very thin] (-1.2,0) grid (24em, 15em);
\begin{scope}[xshift=0em,yshift=8em]
\draw[black, very thick,x=1em,y=1em] (0,7) -- (3,6)--(4,2);
\draw[black, dashed,x=1em,y=1em] (4,2)--(0,0)--(-2,1)--(-3,5)--(0,7);
\node[x=1em,y=1em] at (0.5,3.5) {\tiny{(i) Defining}};
\node[x=1em,y=1em] at (1.8,7.0) {\tiny{$-\tfrac{1}{3}$}};
\node[x=1em,y=1em] at (4.5,4.5) {\tiny{$-\tfrac{4}{1}$}};
\end{scope}
\begin{scope}[xshift=9em,yshift=8em]
 \draw[black, very thick,x=1em,y=1em] (0,7) -- (4,5)--(5,2);
\draw[black, dashed,x=1em,y=1em] (5,2)--(1,0)--(-1,1)--(-2,4)--(0,7);
\node[x=1em,y=1em] at (1,3.5) {\tiny{(ii) Almost def.}};
\node[x=1em,y=1em] at (2.8,6.5) {\tiny{$-\tfrac{2}{4}$}};
\node[x=1em,y=1em] at (5.2,4.0) {\tiny{$-\tfrac{3}{1}$}};
\end{scope}
\begin{scope}[xshift=19em,yshift=8em]
 \draw[black, very thick,x=1em,y=1em] (0,7) -- (2,6)--(4,3);
\draw[black, dashed,x=1em,y=1em] (4,3)--(4,2)--(0,0)--(-2,1)--(-3,5)--(0,7);
\node[x=1em,y=1em] at (0.5,3.5) {\tiny{(ii) Almost def.}};
\node[x=1em,y=1em] at (2.2,6.5) {\tiny{$-\tfrac{1}{2}$}};
\node[x=1em,y=1em] at (3.6,5.0) {\tiny{$-\tfrac{3}{2}$}};
\end{scope}
\begin{scope}[xshift=1em,yshift=0em]
 \draw[black, very thick,x=1em,y=1em] (0,7) -- (2,5)--(3,2);
\draw[black, dashed,x=1em,y=1em] (3,2)--(0,0)--(-2,1)--(-3,5)--(-1,7)--(0,7);
\node[x=1em,y=1em] at (0,3.5) {\tiny{(iii) Non-def.}};
\node[x=1em,y=1em] at (1.5,6.5) {\tiny{$-\tfrac{2}{2}$}};
\node[x=1em,y=1em] at (3.2,3.5) {\tiny{$-\tfrac{3}{1}$}};
\end{scope}
\begin{scope}[xshift=10em,yshift=0em]
 \draw[black, very thick,x=1em,y=1em] (0,7) -- (2,6)--(3,2);
\draw[black, dashed,x=1em,y=1em] (3,2)--(0,0)--(-2,1)--(-3,5)--(0,7);
\node[x=1em,y=1em] at (0,3.5) {\tiny{(iii) Non-def.}};
\node[x=1em,y=1em] at (1.8,6.8) {\tiny{$-\tfrac{1}{2}$}};
\node[x=1em,y=1em] at (3.3,3.8) {\tiny{$-\tfrac{4}{1}$}};
\end{scope}
\begin{scope}[xshift=18em,yshift=0em]
 \draw[black, very thick,x=1em,y=1em] (0,7) -- (2,5)--(4,2);
\draw[black, dashed,x=1em,y=1em] (4,2)--(3,0)--(0,0)--(-2,1)--(-3,5)--(-1,7)--(0,7);
\node[x=1em,y=1em] at (0,3.5) {\tiny{(iii) Non-def.}};
\node[x=1em,y=1em] at (1.8,6.2) {\tiny{$-\tfrac{2}{2}$}};
\node[x=1em,y=1em] at (3.8,3.8) {\tiny{$-\tfrac{3}{2}$}};
\end{scope}
\end{tikzpicture}
 \caption{Examples of defining/almost defining/non-defining Newton polygons, see Definition~\ref{def:newtonCases}.
 The slopes of the edges of the northeasten boundary are shown as fractions,
 such that the length of the projection is  the respective denominator.
 }\label{fig:newtonCases}
\end{figure}
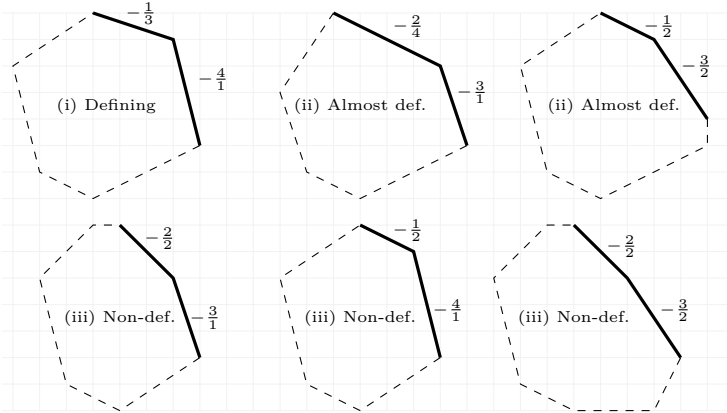

\begin{definition}\label{def:Newton} 
A Newton polygon $N\subset \bR^2$ is called \defin{defining/almost defining/non-defining} if its northeastern border 
contains at least one edge and is  \defin{defining/almost defining/non-defining} respectively. 
\end{definition}

In Figure~\ref{fig:newtonCases}, we show examples of Newton polytopes illustrating Definition~\ref{def:newtonCases} and  Definition~\ref{def:Newton}.

\begin{proposition} \label{pr:imp} 
Given $\NE\subset \bR^2$ as above, the  convex hull of $\mathcal U^\infty_R(w)$ converges to $\bC$,  when $w\to +\infty$

\begin{enumerate}[label={\upshape(\roman*)}]
\item 
 for any $R\in Pol(\NE)$ if $\NE$ is defining;

\item 
for generic $R\in Pol(\NE)$ if $\NE$ is almost defining;

\item 
if $\NE$ is non-defining there is a full-dimensional subset of $Pol(\NE)$ 
for which the  convex hull of $\mathcal U^\infty_R(w)$ converges to $\bC$ when $w\to +\infty$ and 
the complement of the latter set in $Pol(\NE)$ is also full-dimensional. 
\end{enumerate}
\end{proposition}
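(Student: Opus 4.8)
The plan is to reduce everything to Corollary~\ref{cor:internal}: the convex hull of $\mathcal U^\infty_R(w)$ converges to $\bC$ if and only if $0$ lies in the interior of $\Conv(\Upsilon_R)$. So the entire proposition becomes a combinatorial/algebraic question about which edge-slopes and edge-lengths can force, or prevent, the set of leading constants $\Upsilon_R$ from surrounding the origin. The key observation is that the leading constants attached to an edge $e_s$ of integer length $\ell+1$ (so its projection on the $u$-axis has length $\ell$) are precisely the nonzero roots of the one-variable polynomial obtained by restricting $R$ to the lattice points of $e_s$; after the monomial substitution $u = \epsilon\, w^{sl_s}$ this restriction becomes (up to a monomial factor and a power of $w$) a univariate polynomial in $\epsilon$ whose Newton-polygon structure is governed by how the lattice points of $e_s$ are spaced. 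If $e_s$ has slope $-\alpha/\beta$ in lowest terms, the lattice points on $e_s$ are spaced $\beta$ apart in the $u$-direction, so the associated polynomial in $\epsilon$ is actually a polynomial in $\epsilon^\beta$; its nonzero roots are the $\beta$-th roots of the roots of a polynomial of degree $=(\text{projection length})/\beta$.

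First I would handle case (i), the defining case. Here some edge has slope $-\alpha/\beta$ with $\beta \ge 3$. The leading constants on that edge are the nonzero roots of a polynomial in $\epsilon^\beta$, hence (if there is at least one nonzero root $\epsilon_0$) they include all $\beta$ rotations $\epsilon_0, \zeta_\beta \epsilon_0, \dots, \zeta_\beta^{\beta-1}\epsilon_0$ where $\zeta_\beta = e^{2\pi i/\beta}$. For $\beta \ge 3$ these $\beta$ points are the vertices of a regular $\beta$-gon centered at $0$, so their convex hull already contains $0$ in its interior, and hence so does $\Conv(\Upsilon_R)$. The only thing to check is that the restriction of $R$ to that edge genuinely has a nonzero root in $\epsilon$: this holds automatically because the corner coefficients of $\NE$ are nonvanishing by the definition of $Pol(\NE)$, so the edge polynomial has two nonzero terms at different $\epsilon$-degrees, forcing a nonzero root. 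This gives (i) for \emph{every} $R \in Pol(\NE)$.

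Next, case (ii), the almost defining case: all edges have integer or half-integer slopes, and either (a) one edge has slope $-\alpha/2$ with integer length $>2$ (i.e.\ projection length $\ge 4$, so at least two interior lattice points beyond the pair), or (b) two edges have slope $-\alpha/2$ and integer length $\ge 2$. In subcase (a), the edge polynomial in $\epsilon$ is a polynomial in $\epsilon^2$ of degree $\ge 2$ in $\epsilon^2$, so generically it has two distinct nonzero roots $\pm\sqrt{r_1}, \pm\sqrt{r_2}$ with $r_1 \ne r_2$; for generic coefficients these four points are in "general position" and $0$ is interior to their convex hull (the genericity is exactly: $r_1/r_2 \notin \bR_{\le 0}$ fails for a measure-zero set, or rather, one checks that the bad locus where $0 \notin \mathrm{int}\,\Conv\{\pm\sqrt{r_1},\pm\sqrt{r_2}\}$ is a proper subvariety). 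In subcase (b), each of the two slope-$-\alpha/2$ edges contributes a pair $\pm\sqrt{r}, \pm\sqrt{r'}$; since the two edges have \emph{different} slopes the two pairs live on different asymptotic scales $w^{\alpha/2}$ vs $w^{\alpha'/2}$ — wait, this needs care: Corollary~\ref{cor:internal} only sees $\Upsilon_R$ as a subset of $\bC$, not stratified by scale, so what I actually need is that the \emph{combined} set of four points $\{\pm\sqrt{r_1},\pm\sqrt{r_2}\}$ (from whichever edges, possibly the same slope) generically surrounds $0$; the point is that generically the four directions $\arg(\pm\sqrt{r_i})$ are spread out enough. The genericity statement in each subcase is a Zariski-open condition on the coefficients of $R$ at the lattice points of the relevant edge(s), so it defines a generic (full-dimensional, dense) subset of $Pol(\NE)$, proving (ii). I should also exhibit one explicit $R$ where it \emph{fails} is not required for (ii) (only genericity is claimed), but it is implicitly needed to know the condition is not vacuous — that follows by specializing, e.g., making one pair nearly real.

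Finally, case (iii), the non-defining case: all edges have integer slope except possibly one edge of slope $-\alpha/2$ and integer length exactly $2$ (projection length $2$, so exactly the two endpoints, one interior lattice point). For an integer-slope edge, $\beta=1$, so the edge polynomial in $\epsilon$ is an honest polynomial and its roots can be anything — in particular, all chosen with positive real part, so $\Conv(\Upsilon_R)$ stays in a half-plane and does not contain $0$; that exhibits a full-dimensional subset of $Pol(\NE)$ where convergence to $\bC$ \emph{fails}. Conversely, to get a full-dimensional subset where it \emph{does} converge to $\bC$: on an integer-slope edge with projection length $\ge 2$ we can freely place two roots $\epsilon_1, \epsilon_2$ at, say, $1$ and $-1+i\delta$, or more simply place roots whose convex hull surrounds $0$ — any integer-slope edge of projection length $\ge 2$ gives us $\ge 2$ free leading constants, and with two (or three) freely chosen complex numbers we can certainly enclose the origin, and this is an open condition. (The half-integer edge of length $2$ contributes $\pm\sqrt{r}$, two antipodal points, which lie on a line through $0$ but don't by themselves enclose it; combined with another edge's constants it can.) Thus both the "converges to $\bC$" locus and its complement are full-dimensional, giving (iii).

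\textbf{Main obstacle.} The routine parts are the reduction via Corollary~\ref{cor:internal} and the bookkeeping of edge polynomials. The genuinely delicate step is case (ii): I must pin down the precise genericity condition and verify that the "bad" locus (where the two or three contributed pairs of leading constants fail to surround $0$) is a \emph{proper} subvariety and not all of $Pol(\NE)$, and simultaneously that it is nonempty (so "generic" is the right word and not "all"). Concretely one must show that as the coefficients on the relevant $-\alpha/2$ edge(s) vary, the arguments of the resulting square roots $\pm\sqrt{r_i}$ vary enough to move $0$ into the interior of their convex hull for an open dense set of parameters — an elementary but fiddly computation about where $0$ sits relative to $\Conv\{\pm\sqrt{r_1}, \pm\sqrt{r_2}\}$ as $(r_1,r_2)$ ranges over $(\bC^*)^2$ subject to the constraint that they are roots of a specific degree-$2$-in-$\epsilon^2$ polynomial whose coefficients are the edge data of $R$. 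I would isolate this as a small lemma: \emph{for $r_1, r_2 \in \bC^*$ with $r_1 \ne r_2$, the origin lies in the interior of $\Conv\{\sqrt{r_1}, -\sqrt{r_1}, \sqrt{r_2}, -\sqrt{r_2}\}$ iff $\arg(r_2/r_1) \notin \{0\}$ and the four points are not collinear} — i.e.\ iff $r_1/r_2 \notin \bR_{>0}$ — which is manifestly a Zariski-open, nonempty condition, and then trace how this condition pulls back to the coefficients of $R$.
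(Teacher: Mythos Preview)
Your proposal is correct and follows essentially the same route as the paper's own proof: reduce to Corollary~\ref{cor:internal}, then analyze the edge polynomials in $\epsilon$ case by case, using the $\mu_\beta$-symmetry of the roots when the slope is $-\alpha/\beta$. The paper phrases the genericity condition in case~(ii) as ``at least two such pairs are non-proportional over $\bR$,'' which is exactly your condition $r_1/r_2 \notin \bR_{>0}$; your explicit lemma at the end is a slightly sharper formulation of the same thing, and your check in case~(i) that the edge polynomial actually has a nonzero root (because the corner coefficients are nonvanishing) is a detail the paper leaves implicit.
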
 

\begin{remark} 
In case (ii), the condition of nongenericity is given by the fact 
that all $\eps\in \Upsilon_R$  are real proportional to each other 
(i.e. they lie on the same real line in $\bC$ passing through the origin);

In case (iii) if one forces the next to the leading coefficient for some edge with integer 
slope and length of projection larger than $2$ to vanish, i.e. one forces the sum of 
the respective $\eps$ to be equal to $0$, then the conclusion of Corollary~\ref{cor:internal} will be valid
for a generic choice of the remaining coefficients at the vertices belonging to this edge.  

If the convex hull of $\mathcal U^\infty_R (w)$ does not tend to $\bC$, but $i_{n}>0$ which 
means that  $\mathcal U_R(w)\setminus \mathcal U^\infty_R (w)$ is nonempty, then the 
convex hull of  $\mathcal U_R(w)$ will tend to the convex cone with apex at $0$  spanned by the elements of  $\Upsilon_R$. 
\end{remark}

\begin{proof}[Proof of Proposition~\ref{pr:imp}]
By Corollary~\ref{cor:internal} we need to prove that the convex hull of $\Upsilon_R$ contains $0$ as its interior point

\begin{enumerate}[label={\upshape(\roman*)}]
 \item 
for any $R\in Pol(\NE)$ if $\NE$ is defining;

 \item 
 for generic $R\in Pol(\NE)$ if $\NE$ is almost defining;
 
 \item 
 if $\NE$ is non-defining, polynomials  $R\in Pol(\NE)$ for which $\Upsilon_R$ 
 contains $0$ as an interior point form a full-dimensional set  with the full-dimensional complement. 
\end{enumerate} 
 
 Indeed, assume that $\NE$ is defining. Then it contains an edge $e_s$ with the slope $-\al/\beta$ where $\alpha$ 
 and $\beta$ are coprime positive integers and $\beta \ge 3$. 
 Take any polynomial $R(u,v)\in Pol(\NE)$ and denote by  $R_s(u,v)$ the restriction of $R$ to $e_s$. 
 Substituting $u=\eps v^{\al/\beta}$ in the equation $R_s(u,v)=0$ and factoring out a power of $v$, 
 we get a univariate algebraic  equation for  $\eps$ which only involves powers of $\eps$ 
 which are multiples of $b\ge 3$. Since every non-vanishing $\eps$ appears in $\Upsilon_R$ 
 together with all $\eps\cdot e^{2\pi \sqrt{-1}\ell/b}$ for $\ell=1,\dots, b-1$ 
 one obtains that $0$ lies in the interior of the convex hull of $\Upsilon_R$.   
 
 Assume now that $\NE$ is almost defining. Then it either contains an edge $e_s$ with the slope $-\al/2$ and 
 length greater than $2$ or two edges with half integer slopes and length $2$ each. 
 (All the remaining edges have integer slopes.) In the former case, the algebraic equation satisfied by 
 $\epsilon$ has an even degree exceeding $2$ and contains only even powers of $\eps$. 
 Its non-vanishing solutions come in pairs of numbers of the form $(\al,-\al)$. 
 If at least two such pairs are non-proportional over $\bR$ (which happens generically) 
 then $0$ is the inner point of $\Upsilon_R$. Similarly, in the latter case we have two 
 second order equations without linear terms defining $\eps$. 
 Again typically their pairs of solutions are non-proportional over $\bR$ and the result follows. 
 
 Finally, assume that $\NE$ is non-defining. Then all edges, but possibly one have integer 
 slopes which means that the corresponding equations for $\eps$ will have all 
 possible monomials present and their non-trivial roots can either contain $0$ 
 inside their convex hull or lie in a half-plane of $\bC$ bounded by a 
 real line passing through the origin in which case $0$ is outside  (on the boundary of) this convex hull. If there is  one edge of length $2$ and 
 half-integer slope in $\NE_R$,  then it produces one pair of opposite values for $\eps$.  
\end{proof}

\section{Application of algebraic results to  invariant sets of degenerate operators}\label{sec:deg}

In what follows, we need to consider the action of  $T=\sum_{j=0}^k Q_k(x)\frac{d^j}{dx^j}$  on polynomials of the form $(x-t)^n$ for sufficiently large $n$. 
One has 
\[
T(x-t)^n=(x-t)^{n-k}\sum_{j=0}^k (n)_j(x-t)^{k-j}Q_j(x)=(x-t)^{n-k}\psi_T(x,n, t)
\]
where $\psi_T(x,n,t)$ is a trivariate polynomial. The important circumstance is that 
the essential part $\psi^+_T(x,n)$ of $\psi_T(x,n,t)$ is independent of $t$, see beginning of \S~\ref{sec:trop}. 
We will apply to $\psi_T^+(x,n)$ the results of the previous section and discuss how its zeros w.r.t $x$ 
behave when $n\to +\infty$.  Denote by $a_j x^{d_j}$ the leading monomial of $Q_j(x)$ and consider the polynomial 
\[
\widetilde \psi_T(x,n)=\sum_{j=0}^ka_jn^jx^{d_j+k-j}.
\]
(It contains much fewer monomials than $\psi_T(x,n,t)$, but with exactly the same coefficients.) 
Notice that  the essential part $\psi^+_T(x,n)$ is obtained from $\widetilde \psi_T(x,n)$ by removing those monomials 
which do not belong to $\NE(\psi_T)$. 

Taking the symbol polynomial $G_T(x,y)=\sum_{j=0}^kQ_k(x)y^j$ of  $T$, we 
introduce its truncation $\widetilde G_T(x,y)=\sum_{j=0}^ka_jy^jx^{d_j}$ and observe that $\widetilde \psi_T(x,n)$ is 
obtained from $\widetilde G_T(x,y)$ by substituting $y$ by $n$ and adding $k-j$ to the powers $d_j$  of $x$ 
 of the respective monomial. Thus  the Newton polygon of $\widetilde \psi_T(x,n)$ is 
obtained from the Newton polygon of  $\widetilde G_T(x,y)$  by the affine transformation $A$ sending $(i,j)$ to 
$(i+k-j,j)$. Therefore $\NE(\psi_T)$ is obtained from the part of the 
boundary of the Newton polygon of $\widetilde \psi_T(x,y)$  under the latter affine transformation,
see Fig.~\ref{fig:neBoundaryShift} for an example.

\begin{figure}[!ht]
\[
\begin{tikzpicture}[baseline=(current bounding box.center)]
\draw[step=1em, lightgray, very thin] (0,0) grid (9em, 9em);
\draw[x=1em,y=1em,black,->] (0,0)--(9,0);
\draw[x=1em,y=1em,black,->] (0,0)--(0,9);
\begin{scope}
\draw[black, very thick,x=1em,y=1em] (3,7) -- (6,6)--(7,2);
\draw[black, dashed,x=1em,y=1em] (7,2)--(3,0)--(1,1)--(0,5)--(3,7);
\node[x=1em,y=1em] at (3,4.0) {$N_T$};
\node[x=1em,y=1em] at (0,9.5) {y};
\node[x=1em,y=1em] at (9.5,0) {x};
\end{scope}
\end{tikzpicture}
\overset{A}\longrightarrow
\begin{tikzpicture}[baseline=(current bounding box.center)]
\draw[step=1em, lightgray, very thin] (0,0) grid (12em, 9em);
\draw[x=1em,y=1em,black,->] (0,0)--(12,0);
\draw[x=1em,y=1em,black,->] (0,0)--(0,9);
\begin{scope}
\draw[black, very thick,x=1em,y=1em] (3,7) -- (7,6)--(12,2);
\draw[black, dashed,x=1em,y=1em] (12,2)--(10,0)--(7,1)--(2,5)--(3,7);
\node[x=1em,y=1em] at (6.5,4.0) {$N_\psi$};
\node[x=1em,y=1em] at (0,9.5) {n};
\node[x=1em,y=1em] at (12.5,0) {x};
\end{scope}
\end{tikzpicture}
\]
 \caption{The affine transformation $A$ sending $N_T$ to $N_\psi$. 
 Here $T=(x^3+\dots)\frac{d^7}{dx^7}+(x^6+\dotsb)\frac{d^6}{dx^6}+\frac{d^5}{dx^5}+(x^7+\dotsb)\frac{d^2}{dx^2}+(x+\dotsb)\frac{d}{dx}+(x^3+\dotsb)$,
 $\widetilde G_T(x,y)=x^3y^7+x^6y^6+y^5+x^7y^2+xy+x^3$ and $\widetilde \psi_T(x,n)=n^7x^3+n^6x^7+n^5x^2+n^2x^{12}+x^{10}.$}\label{fig:neBoundaryShift}
\end{figure}
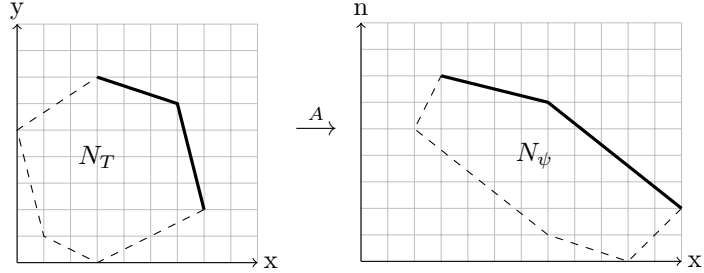

Denote the Newton polygon of $\widetilde G_T(x,y)$ by $N_T$ and the Newton polygon of  $\widetilde \psi_T(x,y)$ by $N_\psi$. 
We have that $N_\psi=A\circ N_T$.   The relation between the slopes of edges before and after the affine transformation $A$ is as follows.  

    If the slope $sl$ of an edge of $N_T$ equals $sl=\frac{\mu}{\nu}$ where $\mu$ and $\nu$ are coprime  integers and $\nu>0$, 
    then the slope of its image denote by $asl$ is given by $asl=\frac{\mu}{\nu-\mu}$ which implies that $asl=\frac{sl}{1-sl}$ 
    or, equivalently, $sl=\frac{asl}{1+asl}$. Therefore if $asl$ is a negative integer  then we get 
    \[
    asl=-J, J>0 \Leftrightarrow sl=\frac{J}{J-1}.
    \]
    Obviously any $sl$ of the above form is positive (or $+\infty$). 
    
    Analogously, if $asl$ is a negative half-integer  then we get 
    \[
    asl=-\frac{J}{2}, J>0 \text{ and odd} \Leftrightarrow sl=\frac{J}{J-2}.
    \]
    Again any $sl$ of the above form is positive  with the only exception $J=1$ for which $sl=-1$. 

It is easy to describe $A^{-1}(\NE_\psi)$ as the part of the boundary $N_T$ starting at $V_{n}$ and 
going southeast till we either reach the lowest point of the polygon or till the slope of 
the next edge  becomes smaller than or equal to $1$. Denote $A^{-1}(\NE_\psi)$ as $\mathfrak B_T$ 
and call it the \defin{shifted northeastern border} of $N_T$. 

One can easily check that for $T=\sum_{j=0}^k Q_k(x)\frac{d^j}{dx^j}$, 
the corresponding $\NE(\psi_T)$ is a single point if and only if $T$ is non-degenerate. 
So for any degenerate $T$, its $\NE(\psi_T)$ contains at least one edge. 
 Additionally, $asl<0$ if and only if $\frac{1}{sl}<1$ which means that either $sl<0$ or $sl>1$. 

Observe that the vertex $V_{n}$ of $\widetilde \psi$ coincides with that of $\widetilde G$. 
 The following notion is important for the rest of the paper. 

\begin{definition} A degenerate operator $T$ is called \textcolor{blue}{defining/almost defining/non-defining} if its Newton polygon $N_\psi$ is defining/almost defining/non-defining resp., see Definition~\ref{def:newtonCases}. 
In terms of the Newton polygon $N_T$ this means that its shifted northeastern border $\mathfrak B_N$ is not a single point and 
in the \emph{defining} case it contains an edge with the slope of the form $\frac{J}{J-\beta}$ with $\beta\geq 3$, 
in the \emph{almost defining} case all edges  of  $\mathfrak B_N$ have slopes $\frac{J}{J-1}$ but there 
exists either one edge with slope  $\frac{J}{J-2}$, $J$ odd and length greater than $2$  or two such 
edges with length $2$; and in the \emph{non-defining} case contains  edges of arbitrary integer 
length with slopes $\frac{J}{J-1}$,  $J$ being a positive integer, except for possibly one edge of 
integer length $2$ whose slope is $\frac{J}{J-2}$, $J$ odd. 
\end{definition}

The following result is an easy consequence of our previous considerations. 

\begin{theorem}\label{th:defining}
For any nonnegative integer $n$ and (almost) any degenerate 
operator $T$ whose $N_T$ is (almost) defining, the only set contained in $\invset{\geq n}$ is $\bC$. 
\end{theorem}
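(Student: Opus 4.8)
The plan is to apply a polynomial of the form $p(x)=(x-t)^n$ to $T$ and exploit Proposition~\ref{pr:imp} together with Corollary~\ref{cor:internal}. First I would recall from the opening of this section that
\[
T(x-t)^n=(x-t)^{n-k}\psi_T(x,n,t),
\]
where the essential part $\psi_T^+(x,n)$ is independent of $t$ and has Newton polygon $\NE(\psi_T)$. Fix an arbitrary closed $S\in\invset{\ge n}$; by item (3) of Theorem~\ref{th:generalGeN} (or by Proposition~\ref{pr:degen}, which also tells us $S$ is unbounded and $T$-invariant) $S$ is nonempty, so we may pick $t\in S$. For every sufficiently large $m\ge n$, the polynomial $(x-t)^m$ has all roots in $S$, hence $T((x-t)^m)$ has all roots in $S$; the nonzero roots of $T((x-t)^m)$ (besides $t$ itself) are exactly the roots of $\psi_T(x,m,t)$, whose asymptotics as $m\to+\infty$ are governed, via the substitution $u=x$, $v=n=m$, by $R(u,v)=\psi_T^+(u,v)$ in the language of Section~\ref{sec:trop}.

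The key step is then: since $T$ is (almost) defining, its Newton polygon $N_\psi$ is (almost) defining, so Proposition~\ref{pr:imp}(i) (resp.\ (ii), for generic $T$, which is the meaning of ``almost any'') guarantees that the convex hull of $\mathcal U^\infty_R(w)$ converges to $\bC$ as $w\to+\infty$. Concretely, for any prescribed $R_0>0$ there is $m$ large enough that $T((x-t)^m)$ has roots at points of modulus exceeding $R_0$ and spanning, in convex hull, the disk $\cdisk_{R_0}$; all of these lie in $S$. Since $S$ is closed and this holds for every $R_0$, we conclude $S\supseteq\bC$, hence $S=\bC$, which is the assertion.

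The one technical point I would be careful about is the passage from ``the divisor $\mathcal U^\infty_R(w)$ is defined for $w$ along the positive real half-axis'' in Section~\ref{sec:trop} to ``$n=m$ ranges over large positive integers'': the asymptotic description in Proposition~\ref{pr:Newton} is continuous in $w$, so restricting to integer values loses nothing, and the leading constants $\Upsilon_R$ — which are what force $0$ into the interior of the relevant convex hull — are unchanged. A second minor point is that we must discard the finitely many ``small'' roots in $\mathcal U_R(w)\setminus\mathcal U^\infty_R(w)$ and the root $t$ of multiplicity $m-k$: these are harmless since we only need the convex hull of the escaping roots to swallow every disk. The main obstacle is really just bookkeeping: making sure the monomials we keep in $\psi_T^+$ are precisely those on $\NE(\psi_T)$ (as explained around Fig.~\ref{fig:neBoundaryShift}), so that Proposition~\ref{pr:imp} applies verbatim; once that identification is in place the argument is immediate. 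In the ``almost defining'' case one additionally invokes the genericity clause of Proposition~\ref{pr:imp}(ii) (the $\eps\in\Upsilon_R$ not all real-proportional), which is exactly why the theorem says ``almost any'' rather than ``any'' such $T$.
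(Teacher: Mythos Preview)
Your argument is correct and is precisely the fleshing-out the paper has in mind; the paper itself offers no proof beyond ``easy consequence of our previous considerations,'' and you have correctly identified those considerations (Proposition~\ref{pr:Newton}, Corollary~\ref{cor:internal}, Proposition~\ref{pr:imp}) and strung them together in the intended way.

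One small slip: at the clinching step you write ``since $S$ is closed and this holds for every $R_0$, we conclude $S\supseteq\bC$.'' Closedness alone does not do this; you need the \emph{convexity} of $S$ (item~(1) of Theorem~\ref{th:generalGeN}) to pass from ``the escaping roots of $T((x-t)^m)$ lie in $S$'' to ``their convex hull, and hence $\cdisk_{R_0}$, lies in $S$.'' With that one word corrected the proof is complete. It may also be worth remarking explicitly that the empty set is excluded by convention (otherwise it vacuously belongs to $\invset{\ge n}$ for $n\ge 1$); once $S\neq\emptyset$ the choice of $t\in S$ is unproblematic.
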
 


\subsection{Degenerate operators with non-defining Newton polygons}\label{sec:nondefining} 

As we have seen above the convex hull of the set $\Upsilon_T$ of all leading constants for (almost) every degenerate $T$ with (almost) defining $N_T$  contains $0$ as its interior point. 

For degenerate $T$ with non-defining $N_T$ whose northeastern border we will denote by $\NE_T$, it might still happen that $0$ is the interior point of the latter convex hull in which case the conclusion of Theorem~\ref{th:defining} holds.  However for a full-dimensional subset of $Pol(\NE)$  with a given non-defining $\NE$, their leading constants belong some half-plane in $\bC$ bounded by a line passing through $0$ and therefore $0$ lies  on the boundary of their convex hull. In this situation   the conclusion of Theorem~\ref{th:defining} fails and we will discuss this case below.

\begin{definition} 
Given a finite set  $\mathcal U=\{u_1,\dotsc, u_k\}$ of (not necessarily distinct) complex numbers,
we define the cone $\C^+ \mathcal U \subseteq \bC$ generated by $\mathcal U$ as  given by 
\[
\C^+ \mathcal U\coloneqq \{\al_1 u_1+\al_2 u_2 +\dotsb + \al_\ell u_\ell\}, \text{where}\; \al_j\ge 0, \:j=1,\dotsc, \ell.
\]

 We say that a set $S\subseteq \bC$ is 
\defin{closed with respect to  $\C^+ \mathcal U\subseteq \bC$} if for any complex number $z\in S$ 
and any $v\in \C^+ \mathcal U$,  $z+v$ belongs to $S$. 
\end{definition}
Obviously, $0$ is the interior point of the convex hull of $\mathcal U=\{u_1,\dotsc, u_\ell\}$ if and only if  $\C^+ \mathcal U=\bC$. 

Given a degenerate operator $T$ with non-defining polygon $N_T$, denote by $\Upsilon_T \coloneqq \{\eps_1,\dotsc, \eps_m\}$ 
 the collection of all its leading constants and set $\C^+_T:=\C^+(\Upsilon_T)$. 
As we mentioned above, if  $\C^+_T=\bC$, then the conclusion of Theorem~\ref{th:defining} holds.
Let us assume now that $\C^+_T$ is a closed sector in the plane with positive angle $\le \pi$. 
(We are then  missing two remaining cases: $\C^+_T$ being a line through  the origin and $\C^+_T$ being a half-line through  the origin.) 

\begin{remark} Recall that by item (2) of Theorem~\ref{th:generalGeN},  any set $S\in \invset{\ge n}$ is unbounded and belongs to $\invset{\ge 0}$, i.e. is unbounded and $T$-invariant. 
\end{remark} 

\begin{lemma}\label{lm:cone}
In the above notation, any  $T$-invariant set $S$ is closed with respect to $\C^+_T$. 
\end{lemma}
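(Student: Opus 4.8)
The plan is to exploit the asymptotic root description from Proposition~\ref{pr:Newton} applied to the trivariate polynomial $\psi_T(x,n,t)$. Fix a $T$-invariant set $S$, a point $z\in S$, and a leading constant $\eps\in\Upsilon_T$ coming from some edge $e_s$ of $\NE(\psi_T)$ with characteristic exponent $sl_s$. I want to show $z+\tau\eps\in S$ for every $\tau\ge 0$; since $S$ is closed and convex (item (1) of Theorem~\ref{th:generalGeN}) and $\C^+_T$ is generated by the finitely many $\eps_j$, this will give closedness with respect to $\C^+_T$. First I would take the family of test polynomials $p_n(x)=(x-z)^n$ with roots at $z\in S$. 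Since $S\in\invset{\ge 0}$, for each large $n$ all roots of $T(p_n)=(x-z)^{n-k}\psi_T(x,n,z)$ lie in $S$; in particular all roots of $\psi_T(x,n,z)$ in the variable $x$ lie in $S$. Because the essential part $\psi_T^+(x,n)$ is independent of $t$ (and equals the northeastern part governing the large roots), Proposition~\ref{pr:Newton} tells us that a subset $\mathcal U^\infty(n)$ of these roots behaves like $x\sim \eps\, n^{sl_s}$ as $n\to\infty$, with $\eps$ ranging over the leading constants attached to $e_s$.

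The next step is the key scaling trick: instead of using $(x-z)^n$, I would use shifted test polynomials whose roots stay at $z$ but whose degree is tuned so that the large roots land near a \emph{prescribed} point $z+\tau\eps$. More precisely, for a target displacement $\tau>0$, one chooses $n$ large and notes that one of the escaping roots of $\psi_T(x,n,z)$ is asymptotically $z' \sim z+\eps\,n^{sl_s}$ (the center of the Newton-polygon analysis is unaffected by the $t$-shift, which only contributes to lower-order monomials; a translation $x\mapsto x-z$ sends the relevant roots to $\eps\,n^{sl_s}+o(n^{sl_s})$). These escaping roots lie in $S$ for every $n$, and $S$ is closed and convex and contains $z$; the segment from $z$ to $z+\eps\,n^{sl_s}$ therefore lies in $S$ up to the $o(n^{sl_s})$ error, which is absorbed by closedness as $n\to\infty$. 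Letting $n\to\infty$ one concludes that the entire ray $\{z+\tau\eps : \tau\ge 0\}$ lies in $S$. Running this over all edges $e_s$ of $\NE(\psi_T)$ and all leading constants $\eps$ attached to each gives $z+v\in S$ for every $v$ that is a non-negative multiple of some $\eps_j$; convexity of $S$ then upgrades this to $z+v\in S$ for all $v\in\C^+_T=\C^+(\Upsilon_T)$, which is exactly the assertion.

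The main obstacle I expect is bookkeeping around the $t$-dependence: one must verify carefully that the roots of $\psi_T(x,n,z)$ that escape to infinity as $n\to\infty$ really are governed by $\psi_T^+(x,n)$ (hence by the $t$-independent northeastern part), and that the subdominant monomials contributed by the $(x-t)^{k-j}$ factors do not perturb the leading constants $\eps$ — only the lower-order asymptotics. This is precisely what the remark preceding Proposition~\ref{pr:Newton} (invariance of $\NE_R$ and $R^{ne}$ under $u\mapsto u+\al$) is designed to give, so the argument is really an application of that stability, but the translation between ``roots of $T((x-z)^n)$'' and ``roots of a bivariate $R(x,n)$ with the same $\NE$'' needs to be made precise. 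A secondary point is to make sure the escaping roots land in the closed set $S$ uniformly enough that the limit of the convex combinations stays in $S$; this is routine given that $S$ is closed and convex and contains $z$, but it should be stated. Once these are in place, the conclusion follows immediately by convexity and the definition of $\C^+_T$.
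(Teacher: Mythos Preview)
Your proposal is correct and follows essentially the same approach as the paper: apply $T$ to $(x-z)^n$ for $z\in S$, invoke the Newton-polygon asymptotics (Proposition~\ref{pr:Newton}) to locate escaping roots along the directions $\eps\in\Upsilon_T$, and then use convexity and closedness of $S$ to capture the full ray $z+\tau\eps$. The paper's proof is a three-sentence sketch of exactly this; your version simply fills in the details (the $t$-independence of the northeastern part, the convex-combination limit) that the paper leaves implicit. One cosmetic remark: the ``scaling trick'' language in your second paragraph is a red herring---you never actually tune anything, you just take the convex combination $(1-\lambda_n)z+\lambda_n x_n$ with $\lambda_n\sim \tau n^{-sl_s}$ and pass to the limit, which is already what you do in the following sentences.
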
 
\begin{proof}  Indeed, take a point $t\in S$ and consider the sequence of polynomials $T(x-t)^n$ when $n$ increases. 
For $n\to \infty$, the roots of $T(x-t)^n$ whose absolute values tend to infinity will be spreading out to infinity 
 approaching some rays whose directions are given by the elements of   $\Upsilon_T$. 
Since every $S$ must be convex the result follows. 
\end{proof} 

\begin{corollary}\label{th:cone} In the above notation, if the product of the leading coefficient $Q_k(x)$ and the constant term $Q_0(x)$ 
of the operator $T$ is not a constant, then any $T$-invariant set $S$ contains the the Minkowski sum $Conv(Q_kQ_0)\oplus \C^+_T\subset  \bC$ of   $\C^+_T$ and  $Conv(Q_kQ_0)$;  the latter set being  the convex hull of the union of all roots of $Q_k(x)$ and $Q_0(x)$.
\end{corollary}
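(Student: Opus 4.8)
The plan is to combine Lemma~\ref{lm:cone} with item (3) of Theorem~\ref{th:generalGeN} and the elementary observation about $Q_0$. Recall that any $T$-invariant set $S$ contains all roots of $Q_k(x)$ (by item (3) of Theorem~\ref{th:generalGeN}) and all roots of $Q_0(x)$ (since $T(1)=Q_0(x)$, so if $S$ is $T$-invariant, the constant polynomial $1$ with ``roots in $S$'' vacuously maps to $Q_0$, whose roots must lie in $S$). Since $S$ is convex by item (1) of Theorem~\ref{th:generalGeN}, it contains the convex hull of the union of these roots, which is exactly $\Conv(Q_kQ_0)$.

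**Next I would** invoke Lemma~\ref{lm:cone}: any $T$-invariant set $S$ is closed with respect to the cone $\C^+_T=\C^+(\Upsilon_T)$, meaning $z\in S$ and $v\in\C^+_T$ imply $z+v\in S$. Applying this to every point $z\in\Conv(Q_kQ_0)\subseteq S$, we get $z+v\in S$ for all $v\in\C^+_T$, i.e. $\Conv(Q_kQ_0)\oplus\C^+_T\subseteq S$. This is precisely the claimed Minkowski sum inclusion. The hypothesis that $Q_kQ_0$ is not constant guarantees that $\Conv(Q_kQ_0)$ is nonempty and genuinely contributes (it could be a single point if $Q_kQ_0$ has a single root, but that is still a legitimate nonempty convex set); the real role of the hypothesis is to ensure the statement is nonvacuous and to match the framing of the surrounding results, and the argument goes through verbatim.

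**The only point requiring a word of care** is that Lemma~\ref{lm:cone} was stated under the standing assumption that $\C^+_T$ is a closed sector of positive angle $\le\pi$; one should note that the proof of Lemma~\ref{lm:cone} in fact only uses convexity of $S$ (item (1) of Theorem~\ref{th:generalGeN}) together with the asymptotic description of the escaping roots of $T((x-t)^n)$ via $\Upsilon_T$ from Proposition~\ref{pr:Newton}, and so the conclusion ``$S$ is closed with respect to $\C^+_T$'' holds regardless of the precise shape of $\C^+_T$ — in the degenerate cases where $\C^+_T=\bC$ this just recovers Theorem~\ref{th:defining}, and in the line/half-line cases it is still valid. Thus no genuine obstacle arises; the corollary is a direct assembly of the two inclusions $\Conv(Q_kQ_0)\subseteq S$ and $S+\C^+_T\subseteq S$. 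I would present it in two or three lines: establish the first inclusion from Theorem~\ref{th:generalGeN} and $T(1)=Q_0$, then apply Lemma~\ref{lm:cone} pointwise over $\Conv(Q_kQ_0)$ to obtain $\Conv(Q_kQ_0)\oplus\C^+_T\subseteq S$.
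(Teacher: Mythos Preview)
Your proposal is correct and follows essentially the same approach as the paper: the paper's proof also first argues that any $T$-invariant set $S$ contains the zero loci of $Q_k(x)$ and $Q_0(x)$, hence by convexity contains $\Conv(Q_kQ_0)$, and then applies Lemma~\ref{lm:cone} to obtain the Minkowski sum. Your version is more explicit about which ingredients from Theorem~\ref{th:generalGeN} are invoked and adds a helpful remark about the standing hypothesis on $\C^+_T$, but the logical structure is identical.
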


\begin{proof} It is an easy to see if any $T$-invariant set $S$ must contain the zero locus of $Q_k(x)$ as well as of $Q_0(x)$ which by convexity of $S$ implies that it should contain $Conv(Q_kQ_0)$. Applying Lemma~\ref{lm:cone} we get the required result.
\end{proof}

Let us now present some conditions for the existence of non-trivial  $T$-invariant set for a degenerate operators  $T$.

\subsection{Degenerate operators with non-defining Newton polygon and constant leading term}

 The remaining case of a constant leading term is discussed below. 
One can easily check that  the class of degenerate operators 
\[
T = \frac{d^k}{dx^k}+Q_{k-1}(x)\frac{d^{k-1}}{dx^{k-1}} + \dotsb + Q_k(x)
\]
with  non-defining $N_T$ splits into two subclasses:

\begin{enumerate}[label={\upshape\Alph*:},leftmargin=*]
\item 
  operators with constant coefficients; 

\item 
 operators satisfying the following three conditions: 
 
\begin{enumerate}[label={\upshape(\roman*)}]
 \item 
  $\deg Q_{k-1}= 1$;  

  \item 
 $\deg Q_j\le 1$ for $j=0,\dotsc, k-2$;  
 
 \item 
  if $j_{min}$ is the smallest  value of $j$ for which $\deg Q_j=1$, then $Q_\ell$  must vanish for all $\ell\le j_{min}-2$.  
\end{enumerate}
\end{enumerate}

For the more interesting subclass (B) the northeastern border of such operator $T$ can consist of 1, 2 or three edges, 
see Figure~\ref{fig:neCases} below. If it consists of $1$ edge then after an affine change of $x$ we can reduce such an operator to
\[
T=\frac{d^k}{dx^k}-x\frac{d^{k-1}}{dx^{k-1}}+\al \frac{d^{k-2}}{dx^{k-2}},\; \al \in \bC.
\]

 If it consists of $2$ edges then after an affine change of $x$ we can reduce such an operator to 
\[
T=\frac{d^k}{dx^k}-x\left(\frac{d^{k-1}}{dx^{k-1}}+\sum_{i=1}^\ell \al_i  \frac{d^{k-1-i}}{dx^{k-1-i}}\right)+\sum_{i=1}^\ell \beta_i  \frac{d^{k-2-i}}{dx^{k-2-i}} 
\]
where $\ell\le k-1$ is a positive integer and all $\al_i$ and $\beta_i$ are arbitrary complex numbers with the only restriction $\al_\ell\neq 0$. 

Finally, if  it consists of $3$ edges then after an affine change of $x$ we can reduce such an operator to 
\[
T=\frac{d^k}{dx^k}-x\left(\frac{d^{k-1}}{dx^{k-1}}+\sum_{i=1}^\ell \al_i  \frac{d^{k-1-i}}{dx^{k-1-i}}\right)+
\sum_{i=1}^\ell \beta_i  \frac{d^{k-2-i}}{dx^{k-2-i}}+\beta_{\ell+1}\frac{d^{k-3-\ell}}{dx^{k-3-\ell}}
\]
where $\ell\le k-3$ is a positive integer, all $\al_i$ and $\beta_i$ are 
arbitrary complex numbers with the  restrictions $\al_\ell\neq 0$ and $\beta_{\ell+1}\neq 0$. 
We will discuss these subcases below.

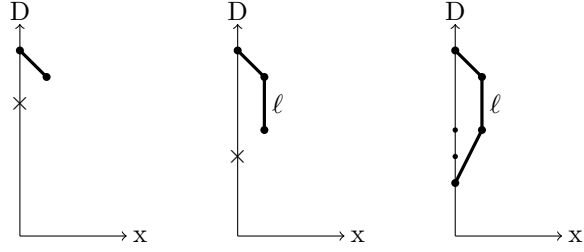
\begin{figure}[!ht]
\begin{tikzpicture}[baseline=(current bounding box.center)]
\draw[x=1em,y=1em,black,->] (0,0)--(4,0);
\draw[x=1em,y=1em,black,->] (0,0)--(0,8);
\draw[black, very thick,x=1em,y=1em] (0,7)--(1,6);
\fill[black,x=1em,y=1em] (0,7) circle (0.15em);
\fill[black,x=1em,y=1em] (1,6) circle (0.15em);
\node[x=1em,y=1em] at    (0,5) {$\times$};
\node[x=1em,y=1em] at (0,8.5) {D};
\node[x=1em,y=1em] at (4.5,0) {x};
\end{tikzpicture}
\qquad
\begin{tikzpicture}[baseline=(current bounding box.center)]
\draw[x=1em,y=1em,black,->] (0,0)--(4,0);
\draw[x=1em,y=1em,black,->] (0,0)--(0,8);
\draw[black, very thick,x=1em,y=1em] (0,7)--(1,6)--(1,4);
\fill[black,x=1em,y=1em] (0,7) circle (0.15em);
\fill[black,x=1em,y=1em] (1,6) circle (0.15em);
\fill[black,x=1em,y=1em] (1,4) circle (0.15em);
\node[x=1em,y=1em] at    (0,3) {$\times$};
\node[x=1em,y=1em] at    (1.5,5) {$\ell$};
\node[x=1em,y=1em] at (0,8.5) {D};
\node[x=1em,y=1em] at (4.5,0) {x};
\end{tikzpicture}
\qquad
\begin{tikzpicture}[baseline=(current bounding box.center)]
\draw[x=1em,y=1em,black,->] (0,0)--(4,0);
\draw[x=1em,y=1em,black,->] (0,0)--(0,8);
\draw[black, very thick,x=1em,y=1em] (0,7)--(1,6)--(1,4)--(0,2);
\fill[black,x=1em,y=1em] (0,7) circle (0.15em);
\fill[black,x=1em,y=1em] (1,6) circle (0.15em);
\fill[black,x=1em,y=1em] (1,4) circle (0.15em);
\fill[black,x=1em,y=1em] (0,2) circle (0.15em);
\fill[black,x=1em,y=1em] (0,3) circle (0.10em);
\fill[black,x=1em,y=1em] (0,4) circle (0.10em);
\node[x=1em,y=1em] at    (1.5,5) {$\ell$};
\node[x=1em,y=1em] at (0,8.5) {D};
\node[x=1em,y=1em] at (4.5,0) {x};
\end{tikzpicture}
\caption{$\NE$ borders of the three sub-cases in subclass (B).
 Here, $\times$ denotes a monomial that might be present, but all monomials below $\times$
 must be absent, i.e. have vanishing coefficients. 
 }\label{fig:neCases}
\end{figure}

\subsubsection{Subclass A, i.e., linear differential operators with constant coefficients}\label{sub:constant}
Observe that in the case case of constant coefficients, if $S$ is a $T$-invariant set, 
then for any $a\in \bC$, $S_a \coloneqq S+a$ is a $T$-invariant set as well. (Similarly for $T_{\ge n}$-invariant sets).

\begin{proposition} \label{pr:constantcoeff} 
Let 
\begin{equation}\label{eq:constant}
T=a_k\frac{d^k}{dx^k} + a_{k-1}\frac{d^{k-1}}{dx^{k-1}}+\dotsb +a_0,\; a_k\neq 0
\end{equation} 
 be a 
linear differential operator with constant coefficients. Let $\La_T^{-1}=\{\la_1^{-1},\dots, \la_k^{-1}\}$ 
be the set of the inverses of  characteristic exponents (not necessarily distinct), where  
\[
a_kt^k + a_{k-1}t^{k-1}+\dots +a_0=a_k(t-\la_1)(t-\la_2) \dotsm (t-\la_k).
\]
Then a convex set $S\subseteq \bC$ is $T$-invariant if and only if $S$ is closed with respect to $\C \La_T^{-1}$. 
\end{proposition}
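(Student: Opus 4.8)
The plan is to analyze the action of the constant-coefficient operator $T$ on the building blocks $(x-t)^n$ and to translate the non-vanishing condition from Proposition~\ref{prop:bb09alt} (or directly from a factorization of $T$) into the cone-closedness condition. First I would factor $T$ in terms of the operator $\diffx$: writing $a_kt^k+\dots+a_0 = a_k\prod_{i=1}^k(t-\la_i)$, we have $T = a_k\prod_{i=1}^k\left(\diffx - \la_i\right)$, so the problem reduces, by composing one factor at a time and using that the class $\invset{\ge 0}$ is closed under such compositions, to understanding a single factor $\diffx-\la$ (with $\la\ne 0$; the factor $\diffx$ alone only imposes convexity, which every set in $\invset{\ge n}$ already satisfies by Theorem~\ref{th:generalGeN}(1), and contributes the ``exponent'' $\infty$, i.e.\ nothing, to $\La_T^{-1}$ — I should be careful about the case $\la_i=0$ and note that then $\la_i^{-1}$ is interpreted as not contributing a generator, consistent with the fact that $\C\La_T^{-1}$ for $T=\diffx$ is trivial).

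The core computation is: for $\la\ne 0$ and a polynomial $p$ with all roots in a convex closed $S$, determine when $(\diffx-\la)p$ has all roots in $S$. Equivalently, $\left(\diffx-\la\right)p(z)=0$ iff $p'(z)/p(z)=\la$, i.e.\ $\sum_j 1/(z-r_j)=\la$ where $r_j\in S$ are the roots of $p$. The key geometric lemma I would prove is: if $S$ is closed convex and $z\notin S$, and if $\la^{-1}\cdot(\text{outward direction})$... more precisely, I claim $z + \C\{\la^{-1}\}\not\subseteq S$ is exactly the obstruction. To see the ``only if'' direction of the main statement, pick $z\in S$ and apply $T$ to $(x-z)^n$: one computes $T[(x-z)^n] = \sum_{j} a_j (n)_j (x-z)^{n-j}$, whose roots besides $z$ are the roots $w$ of $\sum_{j=0}^k a_j(n)_j (x-z)^{k-j}$; as $n\to\infty$ these roots, after rescaling, approach $z + \frac{1}{n}\la_i^{-1}$ for the finite $\la_i$ (this is the degenerate-operator root-asymptotics mechanism of \S\ref{sec:deg}, here in its simplest incarnation since $Q_k$ is constant). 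Hence $S$ being $T$-invariant (and convex) forces, letting $n\to\infty$ along rays, that $z+\al\la_i^{-1}\in S$ for all $\al\ge 0$ and all $i$ — that is, $S$ is closed with respect to $\C\La_T^{-1}$.

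For the ``if'' direction, assume $S$ is convex, closed, and closed with respect to $\C\La_T^{-1}$; I must show every $T$-image of a polynomial with roots in $S$ has roots in $S$. By the factorization it suffices to treat $T=\diffx-\la$, and I would argue by a separating-hyperplane/``apolarity''-type argument: suppose $p$ has all roots $r_j\in S$ and $w\notin S$ satisfies $\sum_j 1/(w-r_j)=\la$. Choose a line $L$ through $w$ (a supporting configuration) separating $w$ from $S$; then each vector $1/(w-r_j)$ lies in a fixed open half-plane $H$ determined by the side of $S$, hence so does their sum $\la$. This says $\la\in H$, equivalently $\la^{-1}$ points from $w$ roughly ``toward $S$'', and more carefully one extracts that the ray $w+\C\{\la^{-1}\}$ must meet $S$ — but combined with $w\notin S$ and the $\C\La_T^{-1}$-closedness of $S$ one derives a contradiction. (The cleanest packaging: $S$ being $\C\La_T^{-1}$-closed and convex means $\bC\setminus S$, if nonempty, is an intersection of open half-planes each of whose inner normal pairs negatively with some $\la_i^{-1}$; pick such a half-plane containing $w$, get the sign contradiction on $\sum 1/(w-r_j)$.)

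The main obstacle I anticipate is making the ``if'' direction's sign/separation argument fully rigorous when $S$ is unbounded or lower-dimensional (a line or point, or a half-plane, or a sector), and handling repeated or zero $\la_i$ cleanly — in particular checking that the supporting line of $S$ at the relevant boundary point interacts correctly with \emph{all} the directions $\la_i^{-1}$ simultaneously rather than one at a time, since composing factors of $T$ one at a time changes the intermediate polynomial's roots and one must ensure those intermediate roots still lie in $S$ (which is exactly where $\C\La_T^{-1}$-closedness, not just $\C\{\la_i^{-1}\}$-closedness for a single $i$, is used). I expect the bookkeeping for the degenerate boundary cases (line, half-line, sector of angle exactly $\pi$) to be the genuinely delicate part, while the generic full-dimensional case follows smoothly from the separating-hyperplane idea.
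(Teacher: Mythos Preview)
Your overall strategy matches the paper's: factor $T = a_k\prod_i(\diffx-\la_i)$, reduce to the order-$1$ case, and induct on $k$; for the ``only if'' direction, both you and the paper read off the cone $\C\La_T^{-1}$ from the root asymptotics of $T[(x-z)^n]$ as $n\to\infty$ (the paper packages this as Lemma~\ref{lm:cone}). The one substantive difference is in the \emph{base case of the ``if'' direction}: you argue directly on the logarithmic derivative $\sum_j 1/(w-r_j)=\la$ with a separating half-plane, whereas the paper (Lemma~\ref{lm:order1}) rescales to $\diffx+1$, writes $p'+p=e^{-x}(pe^x)'$, approximates $e^x$ by $(1+x/n)^n$, and reduces to Gauss--Lucas for $((x+n)^np(x))'$, then lets $n\to\infty$.

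Both base-case arguments work. The paper's trick is slicker and avoids any case analysis on the shape of $S$. Your argument is in the Gauss--Lucas spirit and is fine, but the contradiction you describe is slightly mis-stated: from the half-plane computation you obtain $\Re(\bar u\,\la^{-1})>0$, i.e.\ $\la^{-1}$ points \emph{out} of the supporting half-plane $H\supseteq S$; on the other hand, $S$ being $\C\{\la^{-1}\}$-closed forces every supporting half-plane of $S$ to satisfy $\Re(\bar u\,\la^{-1})\le 0$. That is the contradiction --- not that ``the ray $w+\C\{\la^{-1}\}$ must meet $S$''. Once you phrase it this way, the ``obstacles'' you list (unbounded or lower-dimensional $S$, repeated/zero $\la_i$) evaporate: zero $\la_i$ contributes nothing (Gauss--Lucas), and the supporting-half-plane step needs only that $S$ is closed, convex, nonempty.
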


\begin{remark} We use the convention that if $\la_j=0$ then its inverse disappears from the list $\La_T^{-1}$. 
Further notice that if $\C \La_T^{-1}=\bC$ which happens in the open (in the usual topology) subset  of 
linear differential operators of the form \eqref{eq:constant} of any 
given order $k\ge 3$, the only $T$-invariant $S\subseteq \bC$ is the whole $\bC$.  
\end{remark}

\begin{proof}[Proof of Proposition~\ref{pr:constantcoeff}]

To prove the implication $\Rightarrow$, 
we invoke Lemma~\ref{lm:cone} and the observation that $\C \La_T^{-1}=\C^+_T$.


To prove the converse implication, we proceed by induction on $k$ whose base is the following statement.  

\begin{lemma}\label{lm:order1}
 For an operator $T = \diffx - \la$, a convex set $S\subseteq \bC$ is $T$-invariant if 
 and only if for any $x\in S$ and  $\tau>0$, the number $x-\tau \la$ 
 belongs to $S$ which is equivalent to $S$ being closed with respect to $\C \La_T^{-1}$. 
 \end{lemma}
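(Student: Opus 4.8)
\textbf{Proof plan for Lemma~\ref{lm:order1}.}

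The plan is to prove both implications by explicit computation with the action of $T=\diffx-\la$ on polynomials with prescribed roots. First I would treat the forward direction ($\Rightarrow$): assume $S$ is $T$-invariant, fix $x_0\in S$ and $\tau>0$, and produce a polynomial $p$ with all roots in $S$ whose image under $T$ has $x_0-\tau\la$ among its roots. The natural candidate is $p(x)=(x-x_0)^n$ for a suitably large $n$: since $T(p)=n(x-x_0)^{n-1}-\la(x-x_0)^n=(x-x_0)^{n-1}\bigl(n-\la(x-x_0)\bigr)$, the nontrivial root of $T(p)$ is $x_0+n/\la$ (when $\la\neq 0$), which moves off to infinity along the ray through $x_0$ in direction $1/\la$ as $n\to\infty$. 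To hit an arbitrary $x_0-\tau\la$ — note the sign: the ray direction I actually want is $-\la$, matching $\C\La_T^{-1}$ only if I reconcile conventions — I would instead use $p(x)=(x-x_1)(x-x_0)^{n-1}$ with $x_1$ chosen in $S$, or more cleanly combine this with convexity of $S$ (item (1) of Theorem~\ref{th:generalGeN}, or the fact that $S$ is assumed convex): since $S$ is convex and contains $x_0$ and the point $x_0+n/\la$ for every $n$, it contains the whole ray, hence $x_0 - \tau\la$ lies in $S$ precisely when $-\la$ and $1/\la$ point consistently — i.e. one checks the cone $\C\La_T^{-1}$ is exactly $\{\,-\tau\la : \tau\ge 0\,\}$ after accounting for the inversion convention in the statement of Proposition~\ref{pr:constantcoeff}. (If $\la=0$ then $T=\diffx$, $\C\La_T^{-1}=\{0\}$, and every convex closed set is $T$-invariant, so the claim is vacuous.)

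For the converse ($\Leftarrow$), assume $S$ is convex, closed, and closed with respect to the ray $\C\La_T^{-1}=\{-\tau\la:\tau\ge0\}$; I want to show every polynomial $p$ with all roots in $S$ has $T(p)$ either zero or with all roots in $S$. Write $p(x)=c\prod_{i=1}^n(x-r_i)$ with each $r_i\in S$. Then $T(p)(x)=p'(x)-\la p(x)=p(x)\bigl(\sum_{i=1}^n \frac{1}{x-r_i}-\la\bigr)$, so a root $z$ of $T(p)$ that is not one of the $r_i$ satisfies $\sum_{i=1}^n \frac{1}{z-r_i}=\la$. Suppose, for contradiction, that $z\notin S$. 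By convexity and closedness of $S$ there is a line $L$ strictly separating $z$ from $S$; let $\mathbf{n}$ be the unit normal to $L$ pointing from $S$ toward $z$. Then every $z-r_i$ has strictly positive inner product with $\mathbf n$, hence so does each $\frac{1}{z-r_i}$ lie in the open half-plane $\{\,w : \Re(\bar{\mathbf n}w)>0\,\}$ — wait, inversion does not preserve a half-plane, so instead I use the standard trick: each $z-r_i$ lies in an open half-plane, so each $\overline{(z-r_i)}^{-1}=(z-r_i)/|z-r_i|^2$ lies in the same open half-plane through the origin, and therefore $\overline{\sum_i (z-r_i)^{-1}} = \sum_i \overline{(z-r_i)^{-1}}$ lies in that open half-plane, so $\sum_i(z-r_i)^{-1}$ lies in the reflected open half-plane; in particular it is a nonzero vector pointing (essentially) from $z$ away from $S$. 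On the other hand $\la$, or rather $-\la$ scaled, points from $S$ into $S$ by the hypothesis that $S$ is closed with respect to $\{-\tau\la:\tau\ge0\}$ and $r_i\in S$: picking any $r_i$, the ray $r_i-\tau\la$ stays in $S$, so $-\la$ points from the separating line into the side containing $S$. This contradicts the equality $\sum_i(z-r_i)^{-1}=\la$, proving $z\in S$.

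The main obstacle I anticipate is bookkeeping the directions and sign conventions correctly — specifically reconciling ``closed with respect to $\C\La_T^{-1}$'' (which involves the inverse $\la^{-1}$ of the characteristic exponent of the constant-coefficient operator $\diffx-\la$) with the fact that the relevant ray appearing in the computation $T(p)=p(x)(\sum (x-r_i)^{-1}-\la)$ is governed by $\la$ itself, not $\la^{-1}$. This is exactly why the first-order base case is stated with the explicit phrase ``for any $x\in S$ and $\tau>0$, the number $x-\tau\la$ belongs to $S$'': one should verify once and for all that for the operator $\diffx-\la$ the cone $\C\La_T^{-1}$ literally equals $\{x\mapsto x-\tau\la:\tau\ge0\}$ applied pointwise, which follows because the single characteristic exponent $t-\la=0$ gives $t=\la$, hence $\La_T^{-1}=\{\la^{-1}\}$, and the forward computation shows the escaping root moves in direction $+1/\la$ away from $x_0$, so $S$ must contain $x_0+\bR_{\ge0}\cdot\la^{-1}$; the apparent mismatch of sign between $+\la^{-1}$ and $-\tau\la$ is resolved by observing the Lemma's two phrasings are being asserted as equivalent, which the argument above establishes. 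The separation argument (Hermite–Biehler / Gauss–Lucas style) is then entirely routine.
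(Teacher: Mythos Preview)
Your separation argument for ($\Leftarrow$) does not close, and the obstacle is exactly the sign issue you flagged but then dismissed. The two formulations in the lemma are \emph{not} equivalent: ``$x-\tau\la\in S$ for all $\tau>0$'' and ``$S$ closed with respect to $\C\La_T^{-1}$'' (meaning $x+\tau\la^{-1}\in S$) agree only when $\la$ is purely imaginary. Concretely, for $T=\diffx-1$ and $S$ the closed left half-plane, $S$ is closed under $x\mapsto x-\tau$, yet $T\bigl((x+1)(x+2)\bigr)=-x^2-x+1$ has the root $(-1+\sqrt5)/2>0$, so $S$ is not $T$-invariant. The ``$x-\tau\la$'' phrasing is simply a slip in the statement; only the $\C\La_T^{-1}$ version is correct, and it is the one actually used downstream in the induction for Proposition~\ref{pr:constantcoeff}. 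Your own forward computation already confirms this: the extra root of $T((x-x_0)^n)$ is $x_0+n/\la$, which forces the ray in direction $+\la^{-1}$, not $-\la$.

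With the corrected hypothesis your half-plane argument works. If $\mathbf n$ is the outward unit normal to a line separating $z\notin S$ from $S$, then from $\Re\bigl(\bar{\mathbf n}(z-r_i)\bigr)>0$ one gets $\Re\bigl(\mathbf n\,(z-r_i)^{-1}\bigr)=\Re\bigl(\bar{\mathbf n}(z-r_i)\bigr)/|z-r_i|^2>0$, hence $\Re(\mathbf n\la)>0$; while closure of $S$ under $x\mapsto x+\tau\la^{-1}$ forces $\Re(\bar{\mathbf n}\la^{-1})\le 0$, i.e.\ $\Re(\mathbf n\la)\le 0$, the desired contradiction. Under the stated ``$-\tau\la$'' hypothesis, however, one obtains only $\Re(\bar{\mathbf n}\la)\ge 0$, which is perfectly compatible with $\Re(\mathbf n\la)>0$; your claimed contradiction does not materialize.

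The paper's argument for ($\Leftarrow$) is entirely different: after rescaling to $T=\diffx+1$, it writes $p'+p=e^{-x}(p\,e^x)'$, approximates $e^x$ by $(1+x/n)^n$, applies Gauss--Lucas to $\bigl((x+n)^np(x)\bigr)'$ --- whose roots lie in the convex hull of $\{-n\}\cup\{\text{roots of }p\}$ --- and lets $n\to\infty$ to trap the roots of $T(p)$ in the half-strip swept out by translating $\Conv(\text{roots of }p)$ in the direction $-1$. Once the cone direction is fixed, your separating-hyperplane route is a legitimate and more direct alternative, in the spirit of the proof of Proposition~\ref{prop:Lame}.
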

 \begin{proof}
In case $\la=0$, any convex set $S$ is $T$-invariant by the Gauss--Lucas theorem. 
For $\la\neq0$, using the rescaling of $x$ we can reduce $T$ to the special case $\diffx+1$. 
Observe that for any polynomial $p(x)$, the zeros of $p^\prime(x) + p(x)$ coincide with that of $e^{-x}(p(x) e^x)^\prime$. 
Recall that $e^x=\lim_{n\to \infty} (1+\frac{x}{n})^n=\lim_{n\to \infty}\frac{(x+n)^n}{n^n}$. 
By translation invariance, we can additionally assume that either all roots of $p(x)$ are real or 
among these roots there is at least one with a positive imaginary part and at least one with 
the negative imaginary part. For any natural $n$, all roots of $((x+n)^np(x))^\prime$ lie in the 
convex hull of all roots of $p$ appended with $-n$. When $n\to \infty$, we get the 
required statement. In other words, all roots of $p^\prime(x)+p(x)=e^{-x}(p(x)e^x)^\prime$ lie in 
the infinite polygon (or half-line) formed by the parallel translation of  the convex hull of all roots of $p$ to infinity in the direction $-1$.
 \end{proof}

To continue our proof by induction, notice that the  operator \eqref{eq:constant} factorises as 
\[
T=a_k \left( \diffx-\la_1 \right) \left(\diffx-\la_2 \right) \dotsm \left(\diffx-\la_k \right)
 =\left( \diffx-\la_1 \right)\widetilde T,
\]
where $\widetilde T$ has order $k-1$. Observe that the factors in the above expansion commute. 
By inductive hypothesis, $S$ is a $\widetilde T$-invariant subset if and only if it is closed  with respect to $\C \La_{\widetilde T}^{-1}$.

Assume that $S\subset \bC$ is closed with respect to $\C \La_T^{-1}$ and let $p(x)$ 
be a polynomial with all roots in $S$. We need to show that $T(p)$ has all roots in $S$. We have that 
$T(p)=\left( \diffx-\la_1 \right)\widetilde T(p)$. Since  $S\subset \bC$ is closed
with respect to $\C \La_T^{-1}$ it is also closed with respect 
to  $\C \La_{\widetilde T}^{-1}$ which implies that all roots of $\widetilde T(p)$ lie in $S$. 
Using Lemma~\ref{lm:order1} again and the fact that $\C \La_T^{-1}$ contains $\C \la_1^{-1}$ we
get that all roots of $T(p)=\left( \diffx-\la_1 \right)\widetilde T(p)$ lie in $S$ as well. 
  \end{proof}

\subsubsection{Subclass B, i.e., operators with constant leading term and $\deg Q_{k-1}=1$} 
In this case  we currently  have only a number of sporadic results. 

\smallskip
Let us  start with operators of order $1$. 
After an affine change of $x$,  
we  only need to consider one single operator $T=\diffx-x$. 
The following statement holds. 

\begin{lemma}\label{lm:d+x} 
For $T=\diffx-x$, its minimal $T$-invariant set $\minvset{\ge 0}$  is the real axis.
\end{lemma}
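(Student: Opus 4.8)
The plan is to determine $\minvset{\ge 0}$ for $T=\diffx-x$ by showing two things: (a) the real axis $\bR$ is $T$-invariant, and (b) every nonempty closed $T$-invariant set contains $\bR$. Together with the fact (item (1) of Theorem~\ref{th:generalGeN}) that invariant sets are convex and closed, this forces the minimal one to be exactly $\bR$.

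For (a), the key observation is that $T(p)=p'(x)-xp(x)=e^{x^2/2}\bigl(e^{-x^2/2}p(x)\bigr)'$, so the zeros of $T(p)$ are the critical points of $e^{-x^2/2}p(x)$. To locate these I would use the approximation $e^{-x^2/2}=\lim_{n\to\infty}\bigl(1-\tfrac{x^2}{2n}\bigr)^n$, mimicking the device in the proof of Lemma~\ref{lm:order1}: writing $q_n(x)=(1-x^2/(2n))^n p(x)=(2n)^{-n}\bigl(\sqrt{2n}-x\bigr)^n\bigl(\sqrt{2n}+x\bigr)^n p(x)$, by Gauss--Lucas all roots of $q_n'$ lie in the convex hull of the roots of $p$ together with the two added real points $\pm\sqrt{2n}$. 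If $p$ has all real roots this convex hull is a real interval, so all critical points of $q_n$ are real; letting $n\to\infty$ and invoking Hurwitz' theorem, all roots of $T(p)=\bigl(e^{-x^2/2}p\bigr)'e^{x^2/2}$ are real. Hence $\bR\in\invset{\ge 0}$ for this $T$.

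For (b), let $S$ be a nonempty closed $T$-invariant set and pick $t\in S$. I would appeal to the machinery already developed for degenerate operators: $T=\diffx-x$ is a degenerate operator of order $1$ (its Fuchs index is $1$, realized by $Q_0(x)=-x$, not by the constant leading coefficient), and one computes directly that $T\bigl((x-t)^n\bigr)=-(x-t)^{n-1}\bigl(x^2-tx-n\bigr)$, whose nontrivial roots are $\tfrac{t}{2}\pm\sqrt{n+t^2/4}\sim\pm\sqrt{n}$ as $n\to\infty$. Thus, by the analysis of $\mathcal U^\infty$ (Proposition~\ref{pr:Newton}) or simply by this explicit formula, $S$ — being closed and containing all roots of all these polynomials — must contain points arbitrarily far out along \emph{both} the positive and negative real directions from $t$; since $S$ is convex (Theorem~\ref{th:generalGeN}(1)) it contains the whole line $\bR$. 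Equivalently, one may phrase this via Lemma~\ref{lm:cone}: here $\Upsilon_T=\{+1,-1\}$ (the two leading constants, since $x\sim \pm\sqrt{n}$ means $\eps=\pm1$ with characteristic exponent $1/2$), so $\C^+_T$ is the whole real line, and any $T$-invariant $S$ is closed with respect to translations by $\C^+_T=\bR$; combined with convexity and nonemptiness this gives $S\supseteq\bR$.

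The main obstacle is really step (a): establishing that $\bR$ itself is invariant, i.e. that $T$ does not push real-rooted polynomials off the real axis. The Hermite-operator identity $T=e^{x^2/2}\circ\diffx\circ e^{-x^2/2}$ plus the Gauss--Lucas/limit argument handles it cleanly, but one must be a little careful that the limiting polynomial $T(p)$ is not identically zero and that no roots escape to infinity — here the degree count ($\deg T(p)=\deg p+1$, since $T$ has Fuchs index $1$ and leading behavior $-xp(x)$) shows $T(p)\not\equiv 0$ and that exactly one new (finite, hence real by the limit) root appears, so Hurwitz applies without loss. Step (b) is comparatively routine given the explicit formula for $T((x-t)^n)$.
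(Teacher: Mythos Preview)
Your proof is essentially correct and takes a different route from the paper in both halves, with one small imprecision in part~(b).

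For (a), the paper simply invokes the stability criterion of \cite[Thm.~1.3]{BorceaBranden2010}: the symbol $F_T(x,y)=y-x$ meets every line of negative slope in a single real point, so $T$ is a hyperbolicity preserver. Your argument via the Hermite identity $T(p)=e^{x^2/2}\bigl(e^{-x^2/2}p\bigr)'$, the approximation $e^{-x^2/2}=\lim_n(1-x^2/(2n))^n$, and Gauss--Lucas is more elementary and self-contained; it is the classical device showing that $e^{-x^2/2}$ is a locally uniform limit of real-rooted polynomials. For (b), the paper works in degree~$1$: from $T(x-\alpha)=-(x^2-\alpha x-1)$ one reads off roots $\alpha/2\pm\sqrt{(\alpha/2)^2+1}$ which always straddle $\alpha$, so iteration forces any invariant subinterval of $\bR$ to be unbounded in both directions. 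You instead send $n\to\infty$ in $T\bigl((x-t)^n\bigr)$ and read off $\Upsilon_T=\{+1,-1\}$, appealing to Lemma~\ref{lm:cone}; this is equally valid and avoids the iteration.

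The imprecision is in your last sentence: closedness under real translations together with \emph{nonemptiness} only gives that $S$ contains the horizontal line $t+\bR$ through a chosen $t\in S$, which is not $\bR$ unless $\Im t=0$. You need one real point in $S$. The quickest fix is to note that $T(1)=-x$, so $0\in S$ for every $T$-invariant $S$, and then $S\supseteq 0+\bR=\bR$. Alternatively, once (a) is in hand the minimal set is automatically a subset of $\bR$, and then your computation with $t$ real finishes the job directly.
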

\begin{proof} 
It is easy to check using  \cite[Theorem 1.3]{BorceaBranden2010} that $T$ is a \textcolor{blue}{\emph{hyperbolicity preserver}}, i.e., $T$ sends every real-rooted 
polynomial to a real-rooted polynomial (or $0$).   

Recall that the \textcolor{blue}{\emph{symbol}} $F_T(x,y)$ of the differential operator $T = \sum_{j=0}^k Q_j(x)\diff{j}$ is by definition given by 
$F_T(x,y):=\sum_{j=0}^k Q_j(x)y^j.$  The above mentioned criterion claims that $T$ is a hyperbolicity preserver if and only if the real algebraic \textcolor{blue}{\emph{ symbol curve}}  $\Gamma_F\subset \bR^2$  given by $F_T(x,y)=0$ must intersect each affine line with negative slope in all real points. (The real plane $\bR^2$ is equipped with coordinates  $(x,y)$).  In other words,  this number of real intersection points counting multuplicity must be equal to the degree of $F_T(x,y)$. In the case under consideration, 
 the symbol $F_T(x,y)$ of $T=\diffx-x$ equals $y-x$ and 
its symbol curve has one real intersection point with each real affine line except for those parallel to $x=y$.   
One can also check that no subinterval of $\bR$ is a $T$-invariant set. Indeed, applying $T$ to $x-\alpha$, $\alpha\in \bR$ we get
\[
T(x-\alpha)=-x(x-\alpha)+1=-(x^2-\alpha x-1)
\]
whose roots are $\alpha/2\pm \sqrt{(\alpha/2)^2+1}$. 
They are the endpoints of a real interval containing $\alpha$.
\end{proof} 

\smallskip
The next results describe which operators $T$ belonging to  the class  B preserve a given half-plane in $\bC$.  As a consequence we  characterize   hyperbolicity preserving $T$ in this class. 


Observe that for any operator $T$ belonging to the class $B$, its  symbol $F_T(x,y)$ is of the form $U(y)-xV(y)$ 
where $U(y)=y^k+\dotsb$ and $V(y)=y^{k-1}+\dotsb$. (Here $\dotsb$ stands for lower degree terms in $y$).  

\begin{lemma}\label{Hstab}
Let $H \subset \bC$ be an open half-plane represented as 
\[
H= \{ az +b : \Im z \leq 0\},
\]
where $a, b \in \bC$ and $a\neq 0$, and let 
\[
T= U\left(\frac d {dx}\right)+ xV\left(\frac d {dx}\right),
\]
where $U$ and $V$ are polynomials.  Then the following are equivalent 
\begin{enumerate}
\item $H$ is $T_n$-invariant for all $n$, 
\item The bivariate polynomial $U(-y/a)+bV(-y/a)+aV(-y/a) x$ is stable in $(x,y)$, 
\item Either $V \equiv 0$ and $U(-y/a)$ is stable, or the rational map 
\[
z \longmapsto \frac 1 a \frac {U(-z/a)}{V(-z/a)} + \frac b a, 
\]
maps the open upper half-plane to the closed upper half-plane. 
\end{enumerate}
\end{lemma}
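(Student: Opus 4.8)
The plan is to deduce this from the half-plane stability criterion already established in Proposition~\ref{prop:halfPlaneCharaterization}, together with the classical theory of M\"obius-type (rank-two linear pencil) stability. First I would observe that for an operator $T$ in class $B$ the symbol is $F_T(x,y)=U(y)+xV(y)$, so after the affine substitution $\phi(x)=ax+b$ the conjugated operator $S(f)(x)=T(f(\phi^{-1}(x)))(\phi(x))$ has symbol, in the variables used by Proposition~\ref{prop:halfPlaneCharaterization}, equal to $\sum_j Q_j(ax+b)(-y/a)^j = U(-y/a)+(ax+b)V(-y/a)$. By Proposition~\ref{prop:halfPlaneCharaterization}, $H$ is $T_n$-invariant for all $n$ (equivalently, for an unbounded set of $n$) if and only if this bivariate polynomial is stable in $(x,y)$, which is exactly condition (2) after writing $(ax+b)V(-y/a)=bV(-y/a)+aV(-y/a)x$. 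This already gives (1)$\Leftrightarrow$(2) essentially for free.

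For (2)$\Leftrightarrow$(3) the key point is that the polynomial in (2) is affine (degree $\le 1$) in $x$: it has the form $P(y)+xR(y)$ where $P(y)=U(-y/a)+bV(-y/a)$ and $R(y)=aV(-y/a)$. The plan is to invoke the standard fact that a polynomial $P(y)+xR(y)$ is stable in $(x,y)$ if and only if either $R\equiv 0$ and $P(y)$ is stable in $y$, or else $R\not\equiv 0$, both $P$ and $R$ have only zeros in the closed lower half-plane, and the rational function $y\mapsto -P(y)/R(y)$ maps the open upper half-plane into the closed upper half-plane (a Herglotz/Nevanlinna-type condition). This is the bivariate analogue of the one-variable Hermite--Biehler theorem and follows, e.g., from the Borcea--Br\"and\'en characterization already cited, or directly: fixing $\Im y>0$, the condition $P(y)+xR(y)\ne 0$ for all $\Im x>0$ says $-P(y)/R(y)$ avoids the open upper half-plane, i.e.\ lies in the closed lower half-plane, which one rewrites as the stated map sending the upper half-plane to the closed upper half-plane. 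Tracking the substitution $y\mapsto -y/a$ turns $-P(y)/R(y)=-(U(-y/a)+bV(-y/a))/(aV(-y/a))$ into exactly $-\tfrac1a\tfrac{U(-z/a)}{V(-z/a)}-\tfrac ba$ up to the orientation introduced by $a$; chasing the half-plane orientations carefully yields the displayed map $z\mapsto \tfrac1a\tfrac{U(-z/a)}{V(-z/a)}+\tfrac ba$ as a self-map of the closed upper half-plane, and when $V\equiv 0$ the degenerate branch gives ``$U(-y/a)$ stable.''

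The main obstacle I anticipate is bookkeeping the half-plane orientations: the parameter $a\in\bC$ can reverse or rotate the upper half-plane, so one must be careful whether ``$\Im z\le 0$'' versus ``$\Im z\ge 0$'' appears, and the rational map in (3) is stated as a self-map of the \emph{upper} half-plane even though $H$ is cut out by $\Im z\le 0$. The cleanest way to handle this is to first reduce to the case $a=1$, $b=0$, i.e.\ $H$ the closed lower half-plane (using that the statement is equivariant under the affine change $\phi$), prove the equivalence there where the rational map is literally $z\mapsto U(-z)/V(-z)$ mapping the upper half-plane to itself, and then transport back. A secondary point requiring a line or two is the degenerate case $V\equiv 0$: then $T=U(d/dx)$ has constant coefficients, and one notes directly that $H$ is $T_n$-invariant for all $n$ iff $U(-y/a)$ is stable, either by specializing condition (2) or by appealing to Proposition~\ref{prop:constantcoeff} / the Hermite--Biehler theorem. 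With these reductions the argument is a short composition of Proposition~\ref{prop:halfPlaneCharaterization} with the affine-in-$x$ stability criterion, so I would keep the write-up to roughly the length of the other proofs in this section.
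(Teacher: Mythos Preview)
Your proposal is correct and follows essentially the same route as the paper: the equivalence (1)$\Leftrightarrow$(2) is read off directly from Proposition~\ref{prop:halfPlaneCharaterization} after computing $\sum_j Q_j(ax+b)(-y/a)^j=U(-y/a)+(ax+b)V(-y/a)$, and (2)$\Leftrightarrow$(3) is obtained by using that the polynomial is affine in $x$ and solving for $x$ to get the Herglotz-type condition on the rational map. The paper's write-up is just terser (two sentences), omitting the orientation bookkeeping and the $V\equiv 0$ branch that you plan to spell out.
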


(For the notions of stability and $T_n$-invariance see  Definitions~\ref{def:T_n} and\ref{def:stablePoly}).

\begin{proof}
By Proposition~\ref{prop:halfPlaneCharaterization}, the first two statements are equivalent. The polynomial in (2) is stable if whenever $z$ is in the upper half-plane and 
\[
U(-z/a)+bV(-z/a)+aV(-y/a) w =0,
\]
then $w$ is in the closed lower half-plane. Solving for $w$ gives the equivalence of (2) and (3). 
\end{proof}

We recall the following version of the Hermite-Biehler Theorem  from \cite{LYPSI}. 
\begin{lemma}\label{HB}
Let $f, g \in \bR[x]$. The following are equivalent
\begin{itemize}
\item the univariate polynomial $f(x)+ig(x)$ is stable,
\item the bivariate polynomial $f(x)+yg(x)$ is stable, 
\item $f$ and $g$ are real-rooted, their zeros interlace, and 
\[
W(f,g)= f'(x)g(x)-f(x)g'(x) \geq 0, \ \ \ \text{ for all } x \in \bR.
\]
\end{itemize}
\end{lemma}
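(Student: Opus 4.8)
The plan is to establish the three conditions equivalent by routing everything through the bivariate polynomial $f(x)+yg(x)$: first I would show that univariate stability of $f+ig$ is equivalent to bivariate stability of $f+yg$, and then that the latter is equivalent to the interlacing-plus-Wronskian condition. The first equivalence is a ``complexification'' of the real pencil $\{f+tg\}_{t\in\bR}$; the second is the classical interlacing content of the Hermite--Biehler theorem. Since the statement is quoted from \cite{LYPSI}, I would keep the write-up to a short self-contained argument.

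\emph{Univariate stability $\Leftrightarrow$ bivariate stability.} One direction is immediate: as $i$ lies in the open upper half-plane, if $f(x)+yg(x)$ is stable and $f(x_0)+ig(x_0)=0$ with $\Im x_0>0$, the point $(x_0,i)$ contradicts stability, so $f+ig$ is stable. For the converse, set $p:=f+ig$ and $q:=f-ig$; since $f,g$ are real one has $q(x)=\overline{p(\bar x)}$, so the zeros of $q$ are the conjugates of the zeros of $p$. The elementary identity $|x-\bar r|^2=|x-r|^2+4\,\Im x\,\Im r$ gives $|x-\bar r|\le|x-r|$ whenever $\Im x\ge 0$ and $\Im r\le 0$; since $\deg p=\deg q$ and the leading coefficients of $p$ and $q$ have equal modulus in every case, multiplying this over the zeros $r$ of $p$ (which, by stability of $p$, lie in $\{\Im\le 0\}$) yields $|q(x)|\le|p(x)|$ for $\Im x\ge 0$. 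Now $f+yg=\tfrac12\bigl((1-iy)p+(1+iy)q\bigr)$; if this vanished at $(x_0,y_0)$ with $\Im x_0>0$ and $\Im y_0>0$, then $p(x_0)\ne 0$ (again by stability), whence $|1-iy_0|=|q(x_0)/p(x_0)|\cdot|1+iy_0|\le|1+iy_0|$, contradicting $|1-iy_0|^2-|1+iy_0|^2=4\,\Im y_0>0$. Hence $f+yg$ is stable.

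\emph{Bivariate stability $\Leftrightarrow$ interlacing and $W(f,g)\ge 0$.} Assume $f+yg$ is stable (and, excluding the trivial case, that neither $f$ nor $g$ is identically zero). Specializing $y$ to a real value and applying Hurwitz' theorem shows $f+tg$ is real-rooted for every $t\in\bR$, and rescaling and letting $t\to\infty$ shows $g$ is real-rooted; the classical fact that a real pencil whose every member is real-rooted consists of polynomials with interlacing zeros then gives the interlacing assertion. Next, for $\Im x_0>0$ one has $g(x_0)\ne 0$, so the unique root $y=-f(x_0)/g(x_0)$ of $y\mapsto f(x_0)+yg(x_0)$ must have $\Im y\le 0$; equivalently $x\mapsto f(x)/g(x)$ maps the open upper half-plane into its closure, i.e.\ is a Pick function, and since $\bigl(f/g\bigr)'=W(f,g)/g^2$, being non-decreasing off its poles is exactly $W(f,g)\ge 0$ on $\bR$. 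Conversely, assuming the third condition, one first factors out the $\gcd$ of $f$ and $g$ (a real-rooted, hence stable, polynomial) to reduce to $f,g$ coprime with strictly interlacing real zeros; then $f/g$ has only simple real poles, with nonpositive residues $f(\mu)/g'(\mu)$ by $W(f,g)\ge 0$, and at most linear growth at infinity with nonnegative slope (again forced by $W\ge 0$ together with $|\deg f-\deg g|\le 1$), so $f/g$ is a Pick function and maps the upper half-plane into its closure. Consequently, for $\Im x_0>0$ the root $-f(x_0)/g(x_0)$ of $y\mapsto f(x_0)+yg(x_0)$ lies in $\{\Im\le 0\}$, so $f+yg$ is stable, and restoring the common factor preserves stability.

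\emph{Main obstacle.} The difficulty is bookkeeping rather than conceptual: identifying ``$f/g$ is a rational Pick function'' with ``interlacing together with $W(f,g)\ge 0$'' cleanly — in particular pinning down the behaviour at infinity and handling the degenerate cases ($f$ or $g$ identically zero, common factors, $\deg f$ versus $\deg g$). These I would dispatch via the $\gcd$ reduction above and the usual conventions for constants and the zero polynomial; alternatively, the entire lemma may simply be cited from \cite{LYPSI}.
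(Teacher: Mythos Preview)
The paper does not prove this lemma: it is introduced with the sentence ``We recall the following version of the Hermite--Biehler Theorem from \cite{LYPSI}'' and then used as a black box. So there is no proof in the paper to compare against; your write-up supplies what the paper simply cites.

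Your argument is correct and follows the standard route. The first equivalence via $p=f+ig$, $q=\overline{p(\bar x)}$ and the inequality $|q|\le|p|$ on the closed upper half-plane is the classical Schur--Cohn/Hermite--Biehler mechanism; the second equivalence, identifying bivariate stability of $f+yg$ with $f/g$ being a rational Pick function and then reading off interlacing and the sign of $W(f,g)$ from the residues and the behaviour at infinity, is exactly how this is done in the references the paper points to. The edge cases you flag (common factors, $f$ or $g$ identically zero, $|\deg f-\deg g|\le 1$) are the only places requiring care, and your $\gcd$ reduction handles them. One minor remark: when you assert $g(x_0)\ne 0$ for $\Im x_0>0$ in the forward direction, this uses that $g$ is real-rooted, which you have indeed already established a sentence earlier --- just make the dependency explicit so the reader does not think you are assuming stability of $g$ a priori.
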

Also if the zeros of $f$ and $g$ interlace, then either $W(f,g) \geq 0$ for all $x$ or $W(g,f) \geq 0$ for all $x$. 
\begin{corollary}
Let 
\[
T= U\left(\frac d {dx}\right)+ xV\left(\frac d {dx}\right),
\]
where $U(y), V(y) \in \bR[y]$. Then $\bR$ is $T_n$-invariant for all $n$ if and only if 
\begin{itemize}
\item there is a nonzero constant $\xi \in \bC$ such that $\xi U(y), \xi V(y) \in \bR[y]$, and 
\item the zeros of $\xi U(y)$ and $\xi V(y)$ are real and interlacing, and 
\[
W(\xi V(y), \xi U(y)) \geq 0, \ \ \ \ \ \text{ for all } x \in \bR.
\]
\end{itemize}
\end{corollary}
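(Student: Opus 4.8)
The plan is to apply Lemma~\ref{Hstab} to the half-plane $H = \bR$, which in the notation there means taking $a=1$ and $b=0$, so that $H = \{z : \Im z \le 0\}$ is the closed lower half-plane, together with its reflection the closed upper half-plane. First I would note that $\bR$ is $T_n$-invariant for all $n$ if and only if both the closed upper half-plane and the closed lower half-plane are $T_n$-invariant for all $n$, since a real polynomial in $\bR[x]$ has all roots in $\bR$ exactly when all roots lie in both half-planes. Actually, a cleaner route: a polynomial with all real roots, when sent by $T$, has all real roots iff the image lies in both closed half-planes; but since $U,V \in \bR[y]$, the operator $T$ has real coefficients, so the image of a real polynomial is real, and then it suffices to control one half-plane. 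So $\bR$ is $T_n$-invariant for all $n$ iff the closed lower half-plane $H = \{z : \Im z \le 0\}$ is $T_n$-invariant for all $n$, i.e.\ (applying Lemma~\ref{Hstab} with $a=1$, $b=0$) iff the bivariate polynomial $U(-y) + V(-y)\,x$ is stable in $(x,y)$.

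Next I would massage $U(-y) + x\,V(-y)$ into the shape required by the Hermite--Biehler criterion (Lemma~\ref{HB}). Renaming the variables, stability of $U(-y) + x V(-y)$ in $(x,y)$ is the same as stability of the bivariate polynomial $f(y) + x\,g(y)$ with $f(y) = U(-y)$, $g(y) = V(-y)$; note $f, g \in \bR[y]$ precisely because $U, V \in \bR[y]$. By Lemma~\ref{HB}, this bivariate polynomial is stable iff $f$ and $g$ are real-rooted with interlacing zeros and $W(f,g) = f'g - fg' \ge 0$ on $\bR$. Pulling the sign change $y \mapsto -y$ back through: $f(y)=U(-y)$ is real-rooted iff $U$ is; the zeros of $U(-y)$ and $V(-y)$ interlace iff the zeros of $U$ and $V$ do; and a short computation of the Wronskian under $y \mapsto -y$ shows $W(f,g)(y) = W(U,V)(-y)$ up to an overall sign that I must track carefully — this is the one place a routine but sign-sensitive calculation is needed. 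The condition $W(f,g)\ge0$ then translates to $W(U,V)\ge 0$ on $\bR$, equivalently $W(V,U)\le 0$; combining with the statement's convention that either $W(\xi V, \xi U)\ge 0$ everywhere or $W(\xi U,\xi V)\ge 0$ everywhere (the last sentence before the corollary), one picks out the correct inequality $W(\xi V(y), \xi U(y)) \ge 0$.

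Finally I would address the phrase ``there is a nonzero constant $\xi \in \bC$ such that $\xi U(y), \xi V(y) \in \bR[y]$.'' In the statement we already assume $U(y), V(y) \in \bR[y]$, so one may take $\xi = 1$; the scalar $\xi$ is there only to record that the whole condition is invariant under multiplying $T$ by a nonzero complex constant (which does not change the zero sets of images), so the corollary as phrased covers operators that become real after such a rescaling. I would remark this briefly and then assemble the chain of equivalences: $\bR$ is $T_n$-invariant for all $n$ $\iff$ closed lower half-plane is $T_n$-invariant for all $n$ $\iff$ $U(-y)+xV(-y)$ is stable $\iff$ (Hermite--Biehler) $U,V$ real-rooted, interlacing, with $W(\xi V,\xi U)\ge0$ on $\bR$.

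The main obstacle I anticipate is purely bookkeeping rather than conceptual: getting the sign in the Wronskian right under the reflection $y \mapsto -y$, and making sure the ``interlacing plus one-sided Wronskian'' dichotomy at the end of Lemma~\ref{HB} is invoked so that the correct one of $W(\xi V,\xi U)$ versus $W(\xi U,\xi V)$ appears. The degenerate cases $V \equiv 0$ (where, via part (3) of Lemma~\ref{Hstab}, one just needs $U(-y)$ stable, i.e.\ $U$ real-rooted, which is the interlacing condition read correctly in the $\deg V = -\infty$ convention) and $U \equiv 0$ should be checked separately but are immediate.
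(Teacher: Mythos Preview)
Your overall strategy matches the paper's: reduce $\bR$-invariance to invariance of the closed lower half-plane, then apply Lemma~\ref{Hstab} with $a=1$, $b=0$, and finish with the Hermite--Biehler Lemma~\ref{HB}. Your Wronskian bookkeeping is correct: with $f(y)=U(-y)$ and $g(y)=V(-y)$ one gets $W(f,g)(y)=W(V,U)(-y)$, so $W(f,g)\ge 0$ on $\bR$ is exactly $W(V,U)\ge 0$ on $\bR$, which is the condition in the statement with $\xi=1$.

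There is, however, a genuine gap in your reduction step. You assert that $\bR$ is $T_n$-invariant for all $n$ \emph{iff} the closed lower half-plane is $T_n$-invariant for all $n$, but your justification only covers one direction. If the closed lower half-plane is $T_n$-invariant and $p$ has all roots in $\bR$, then (after scaling to make $p$ real) $T(p)\in\bR[x]$ has all roots in the closed lower half-plane, hence on $\bR$ by conjugate symmetry --- fine. The converse, however, is not a triviality: knowing that $T$ preserves real-rootedness says nothing a priori about polynomials whose roots lie in the closed lower half-plane but are not all real. Your sentence ``a real polynomial in $\bR[x]$ has all roots in $\bR$ exactly when all roots lie in both half-planes'' does not address this direction at all. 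The paper fills this gap by citing Theorems~1.2 and~1.3 of Borcea--Br\"and\'en (2010), which establish precisely that for a differential operator with real coefficients, preservation of $\bR$ is equivalent to preservation of the closed lower half-plane. You need to invoke that result (or reprove it) for the forward implication of the corollary to go through.
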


\begin{proof}
If $\bR$ is $T_n$-invariant for all $n$, then there is a nonzero $\xi \in \bC$ such that 
$\xi T : \bR[x] \to \bR[x]$, see Section 4 of \cite{LYPSI}. 

Moreover for any differential operator $T$ with real coefficients, $\bR$ is $T_n$-invariant for all $n$ if and only if the closed lower half-plane is $T_n$-invariant for all $n$, see Theorem 1.2 and Theorem 1.3 in \cite{BorceaBranden2010}. Hence the result follows from Lemma~\ref{Hstab} and Lemma~\ref{HB}.
\end{proof}

\section{Variations of the original set-up}\label{sec:examples}

Above we have mainly concentrated on  invariant sets for  roots of polynomials of degree at least $n$.
Currently  we neither have a 
description of the minimal invariant sets whose existence we have established nor   
a  numerically stable procedure which will construct them or their approximations in specific examples. 

The goal of this section is to present some interesting variations 
of our basic notion of invariant sets together with numerical 
examples illustrating the other types of invariant sets introduced below.  These notions are of independent interest and might be easier to study.


\medskip 
\noindent 
\textbf{Variation 1: invariant sets for roots of polynomials of a  fixed degree.}
Instead of looking for  a set which  is invariant for roots of polynomials of degree at least $n$,
we can relax the requirement and ask  that a set is only invariant for roots of polynomials of degree exactly $n$. We call this property $T_n$-invariance, see  Definition~\ref{def:T_n}.  

Given $T$ and $n$, we denote by $\invset{n}$ the family of $T_n$-invariant sets   and we denote by $\minvset{n}$ the corresponding unique minimal closed invariant 
set (if it exists), see Introduction. 
Note that $\minvset{n} \subseteq \minvset{\geq n}$. It is natural to study $\minvset{n}$ for  exactly solvable operators $T$ since in this case they preserves the degrees of polynomials they act upon. 

One can observe that in many cases $\minvset{n}$ can have a complicated structure --- in particular,
it does not need to be convex, and it can be a fractal etc.  An illustration can be found in Example~\ref{ex:juliaExample} and  Figure~\ref{fig:julia}. We plan to carry out the detailed study of  $T_n$-invariant sets   in the sequel paper \cite{AlBrSh2}.

\begin{example}\label{ex:juliaExample}
The minimal invariant set $\minvset{1}$ for the differential operator $T = (x^2-x+i)\diffx + 1$
coincides with the classical Julia set associated with $f(x) = x^2+i$.
\begin{figure}[!ht]
\centering
  \includegraphics[width=0.5\textwidth]{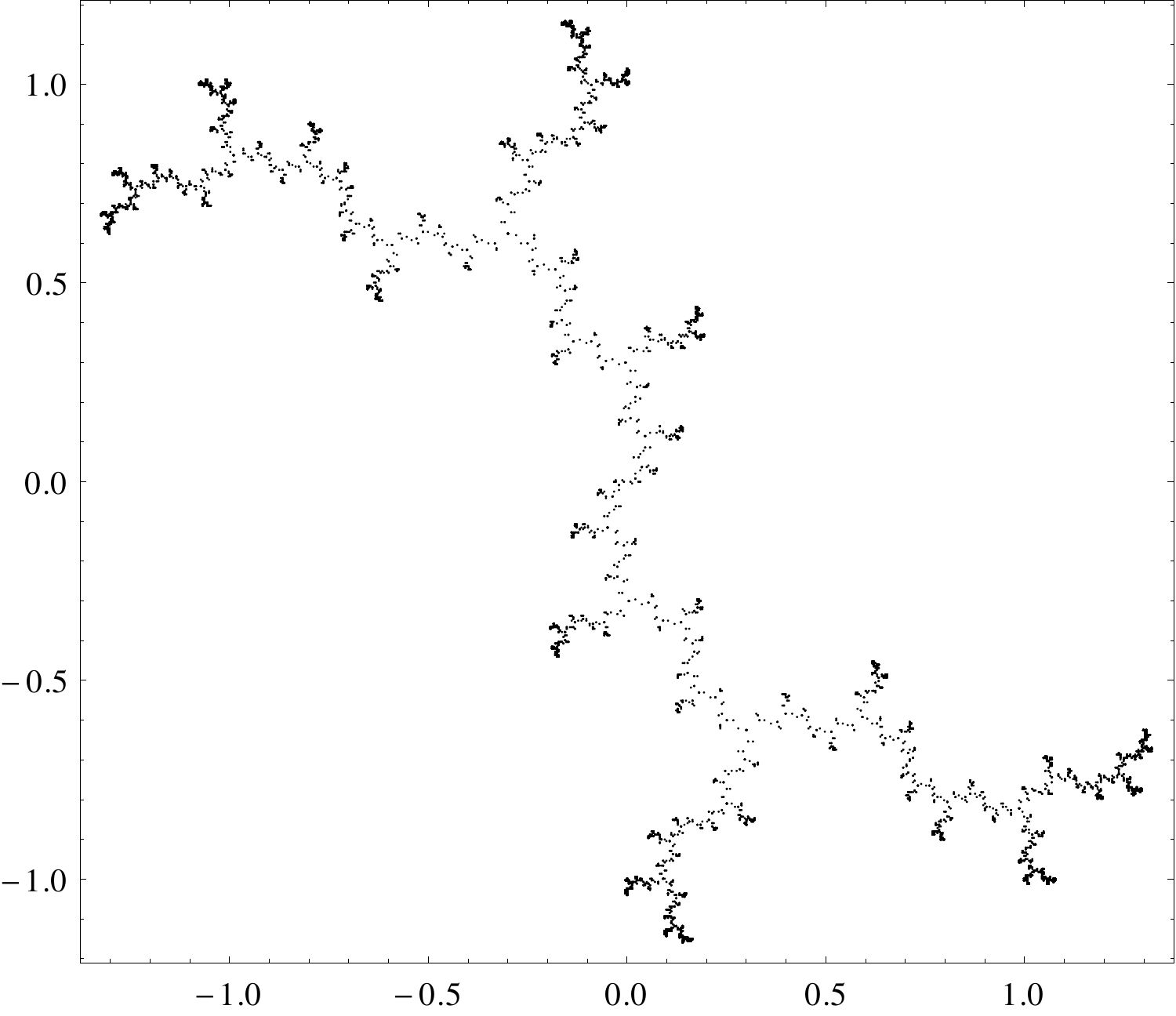}
\caption{The minimal set  $\minvset{1}$ for the operator $(x^2-x+i)\diffx + 1$.
This set has the property that if $t \in \minvset{1}$, then $\pm \sqrt{t-i}$ is also in $\minvset{1}$.
}\label{fig:julia}
\end{figure}
\end{example}

\medskip 
\noindent 
\textbf{Variation 2: Hutchinson-invariant sets.}
A set $S \subset \bC$ is called \defin{Hutchinson-invariant in degree $n$} 
if every polynomial of the form $P(x) = (x-t)^n$ with $t\in S$,
has the property that $T(P)$ has all roots in $S$ (or is constant).
In particular,   a $T_1$-invariant set is a   Hutchinson-invariant set in degree $1$ and vice versa. However, for $n>1$, $T_n$-invariant sets and  
Hutchinson-invariant sets  in degree $n$ in general do not  coincide.  
We denote by $\hinvset{n}$ the collection of all Hutchinson-invariant sets in degree $n$ and by $\hminvset{n}\in \hinvset{n}$ the unique minimal under inclusion 
closed Hutchinson-invariant set in degree $n$ (if it exists). 
Notice that
\[
  \hminvset{n} \subseteq \minvset{n} \subseteq \minvset{\geq n}.
\]
In particular, if $\hminvset{n}$ exists, then $\minvset{n}$ and $\minvset{\geq n}$  exist as well.

To explain our choice of terminology, recall that a \textcolor{blue}{ \emph{Hutchinson operator}} is defined by a finite collection  of univariate functions $\phi_1,\dotsc,\phi_m$ and its invariant sets were introduced and studied in \cite{Hutchinson1981} as well as  a large number of follow-up papers.  
In our situation, let us assume that the action of  $T$ on $(x-t)^n$ factorizes as:
\begin{equation}\label{eq:factorizable}
 T((x-t)^n) = \left(x-(a_1t+b_1)\right)\dotsm \left(x-(a_m t+b_m)\right),
\end{equation}
see e.g. \eqref{eq:exactlySovlableNonCompact}. 
Then we have that if $S\subset \bC$ is Hutchinson-invariant in degree $n$,
then $f_i(S) \subseteq S$ for all $i=1,2,\dotsc,m$, where $f_i(t)=a_i t+ b_i$.
If all these $f_i$ are contractions, that is, $|a_i|<1$, one can show that there is a unique minimal non-empty closed
Hutchinson-invariant set $S$, and it is exactly the invariant set associated with the 
 \textcolor{blue}{\emph {Hutchinson operator}} defined by $f_1,\dotsc,f_m$, see \cite{Hutchinson1981}. (One can also consider other types of factorizations similar to \eqref{eq:factorizable} with e.g. polynomial or rational factors.) 
This observation implies that one can obtain many classical fractal sets such as the Sierpinski triangle, the Cantor set,
the L\'evy curve  and the Koch snowflake as Hutchinson-invariant sets, see Example~\ref{ex:levy}.
In particular, $\minvset{n}$ does not have to be connected.

Julia sets associated with rational functions can also be 
realized as Hutchinson-invariant sets of appropriately chosen operators $T$, see \cite{AlBrSh2}.
Let us  illustrate the situation with  
Example~\ref{ex:levy} and Example~\ref{ex:juliaSpecial}.

\begin{example}\label{ex:levy}
For the differential operator $T = x(x+1)\diff{2} + i \diffx + 2$, the set $\hminvset{2}$
is a L\'evy curve. The roots of $T((x-t)^2)$ are given by 
\[
x= \frac{1+i}{2}t \qquad \text{ and }\qquad x=\frac{1-i}{2}(t - i).
\]
The two maps 
\begin{equation}\label{eq:levyMaps}
 t \mapsto \frac{1+i}{2}t \qquad \text{ and } \qquad t \mapsto \frac{1-i}{2}(t - i)
\end{equation}
are both affine contractions which together produce a fractal L{\'e}vy curve as their invariant set, see Figure~\ref{fig:levy}.
In particular, every member of $\invset{2}$
must contain $\hminvset{2}$ given by the latter curve which also implies that $\minvset{2}$ exists.
\begin{figure}[!ht]
 \centering
  \includegraphics[trim=100 330 100 220,clip,width=0.6\textwidth]{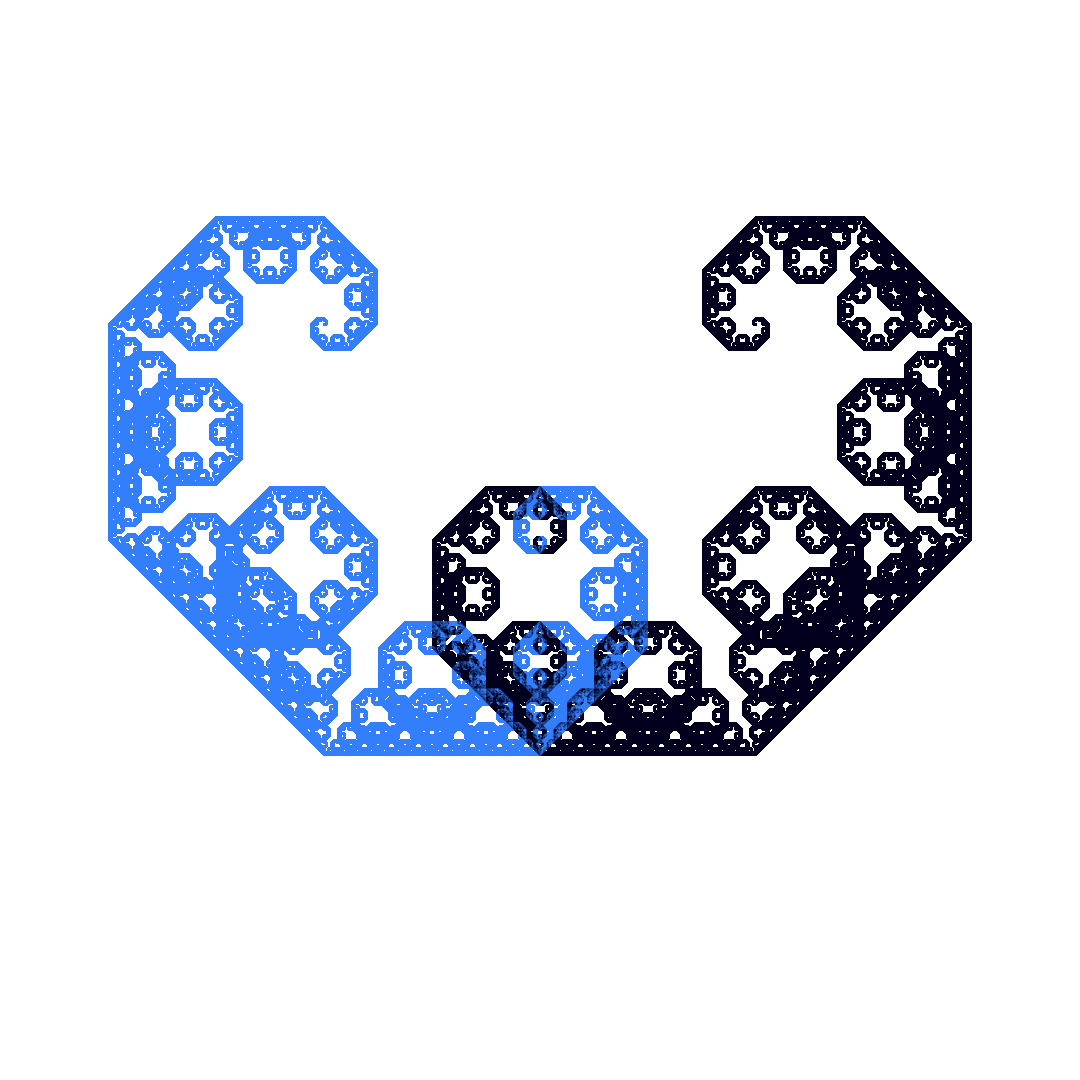}
  \caption{
The Hutchinson-invariant set $\hminvset{2}$ 
for the operator $T = x(x+1)\diff{2} + i \diffx + 2$.
The two colors indicate the image of the set under the two maps in \eqref{eq:levyMaps}.
}\label{fig:levy}
 \end{figure}

 \begin{example}\label{ex:juliaSpecial}
The differential operator $T = x(x-1)\diffx + 1$
admits \emph{two} minimal\footnote{Minimal here means that no proper closed subset is an invariant set.} 
sets $\hminvset{1}$, one of which is the one-point set $\{0\}$ and the other is the unit circle.
This fact is in line with known properties of the Julia sets; some very special rational functions admit
several completely invariant sets containing one or two points.
The reason why the above case is  exceptional, is that $T$ maps the polynomial $x$  to $x^2$,
which has the same zeros as $x$. In general, such exceptional invariant sets 
only show up in the situation when there exists some $t$ such 
that $T(x-t)=c(x-t)^k$, see \cite{Beardon2000}.
\end{example}

\end{example}

\begin{remark}
There are at least two advantages in studying Hutchinson-invariant sets compared to the set-up of the present paper. 
The first one is that  the occuring types of fractal sets have already been
extensively studied which connects this topic to the existing classical complex dynamics,   
comp. e.g.~\cite{Barnsley1993,Falconer2004}.
The second advantage is that there exists a stable Monte--Carlo-type method for producing a good approximation  of $\hminvset{n}$, whenever the latter set is compact. Namely, 
\begin{enumerate}
 \item start with some $z_0 \in \bC$; 
 \item for $j=0,1,2,\dots$, pick randomly a root   of $T((x-z_j)^n)$ with equal probability, and denote it by $z_{j+1}$;
 \item plot $z_{j+1}$ and iterate step 2 until a picture emerges.
\end{enumerate}

Our experiments show that  about 100 iterations per final pixel gives a clear picture. This algorithm was used to create Figure~\ref{fig:julia}.
The set of points $z_j$ rapidly converge to the set $\hminvset{n}$,
and  the initial choice of $z_0$ statistically will not matter.
\end{remark}

Further information about Hutchinson-invariant sets can be found in a forth-coming paper \cite{He}. 

\medskip 
\noindent 
\textbf{Variation 3: Continuously Hutchinson-invariant sets.}
%


Given $T$ and $n$ as above,  consider 
\[
\psi(x,t,n) \coloneqq T((x-t)^n)/(x-t)^{n-k},
\]
where $k$ is the order of the operator $T$. Then $\psi(x,t,n)$
is a polynomial in $\bC[x,t,n]$. 
Given $n_0 \geq 0$, we say that
a set $S$ is \defin{continuously Hutchinson-invariant with parameter $\geq n_0$} 
if for every \emph{real} number $n\geq n_0$, we have that 
\[
 \psi(x,t_0,n) = 0 \qquad \text{ (considered as a polynomial in $\bC[x]$)}
\]
has all roots in $S$, whenever $t_0 \in S$. 
We denote by $\ctinvset{\geq n_0}$ the collection of all continuously 
Hutchinson-invariant with parameter $\geq n_0$  and by $\ctminvset{\geq n_0}$ 
the minimal non-empty closed such set $S$ (it if exists).
It is easy to verify that, for all integers $m \geq 1$, 
\[
\hminvset{m} \subseteq \ctminvset{\geq m} \subseteq \ctminvset{\geq 0}.
\]

Properties of the minimal continuously  Hutchinosn invariant set $\ctminvset{\geq n_0}$ seem 
to substantially depend on whether $n_0 =0$ or $n_0>0$: 
Namely,  the boundary of $\ctminvset{\geq 0}$ looks rectifiable,
while the boundary of $\ctminvset{\geq 1}$ seem to have a fractal (and non-rectifiable) character.
However, in contrast with Hutchinson-invariant sets which can be fractal, $\ctminvset{\geq n_0}$ 
always has a finite number of connected components. For operators $T$ of order $1$, continuously  Hutchinson invariant sets with positive  parameter have been 
studied in details in \cite{AHNST}. 




In general, it is unclear what the relation
between $\ctminvset{\geq n}$ and $\minvset{\geq n}$ is,
but for large $n$, we expect the inclusion $\ctminvset{\geq n} \subseteq \minvset{\geq n}$,
since extending the domain of $n$ from the set of large integers to the set of large real numbers does not seem to make a big difference.
Note that Theorem~\ref{thm:limit} and Proposition~\ref{prop:hutchinsonConvHull} suggest
that these sets coincide in the limit $n \to \infty$. 

The following proposition shows that as $n_0$
grows, the minimal continuously Hutchinson-invariant set 
converges to the zero locus of the leading coefficient $Q_k$ of $T$.
\begin{proposition}[Convergence to the zero locus of $Q_k$]\label{prop:hutchinsonConvHull}
Given a non-degenerate operator $T = \sum_{j=0}^k Q_j \diff{j}$,  $R>0$ and $\delta > 0$, 
then there exists  $n_0 = n_0(R,\delta)$ such that
for all $t \in \bC$, with $|t|<R$ we have that each root of 
\[
 T[ (x-t)^n ] =0
\]
different from $t$ lies at a distance at most $\delta$ from some root of $Q_k(x)$.

In particular, for any $\delta>0$,  there exists an $n_0 = n_0(\delta)$ 
such that  the $\delta$-neighborhood of the union of roots 
of $Q_k(x)$  is 
Hutchinson-invariant in degree  $n$, for all $n \geq n_0$.
The same holds for the continuously Hutchinson-invariant sets with parameter exceeding $n$.
\end{proposition}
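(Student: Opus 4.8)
The plan is to analyze the roots of $T[(x-t)^n]$ directly using the factorization $T[(x-t)^n] = (x-t)^{n-k}\psi_T(x,n,t)$, where $\psi_T(x,n,t) = \sum_{j=0}^k (n)_j (x-t)^{k-j} Q_j(x)$. First I would fix $R>0$ and $\delta>0$ and work uniformly over $|t|<R$. Normalizing by the dominant term, divide $\psi_T(x,n,t)$ by $(n)_k Q_k(x)$ to obtain
\[
\frac{\psi_T(x,n,t)}{(n)_k Q_k(x)} = (x-t)^0 + \sum_{j=0}^{k-1} \frac{(n)_j}{(n)_k}\,\frac{(x-t)^{k-j}Q_j(x)}{Q_k(x)},
\]
where the $j=k$ term contributes $1$. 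The key observation is that $\deg\bigl((x-t)^{k-j}Q_j(x)\bigr) = (k-j)+\deg Q_j \le k + (\deg Q_k - k) = \deg Q_k$ by non-degeneracy (since $\deg Q_j - j \le \deg Q_k - k$ for all $j$), so each rational function $(x-t)^{k-j}Q_j(x)/Q_k(x)$ is bounded by a constant $C = C(R)$ for all $x$ with $\mathrm{dist}(x, \mathrm{Conv}(Q_k)) \ge \delta$ — actually, more carefully, for $|x|$ large; I will need to handle the region near but outside the $\delta$-neighborhood of the roots of $Q_k$ separately by compactness. Since $(n)_j/(n)_k \to 0$ as $n\to\infty$ for $j<k$, for $n$ large enough the whole sum over $j<k$ is bounded in absolute value by, say, $1/2$ on the relevant region, so $\psi_T(x,n,t)$ cannot vanish there. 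Hence every root of $\psi_T(x,n,t)$ lies within distance $\delta$ of a root of $Q_k(x)$ once $n \ge n_0(R,\delta)$, and the roots of $T[(x-t)^n]$ are $t$ together with the roots of $\psi_T$, giving the first assertion.

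Next, for the "in particular" claim about Hutchinson-invariance of the $\delta$-neighborhood $N_\delta$ of the zero set of $Q_k$: given $\delta$, apply the first part with $R$ chosen so that $N_\delta \subseteq D_R$ (it is bounded since $T$ is non-degenerate and the root set of $Q_k$ is finite). Then for any $t \in N_\delta \subseteq D_R$ and $n \ge n_0(\delta) := n_0(R,\delta)$, every root of $T[(x-t)^n]$ is either $t \in N_\delta$ or lies within $\delta$ of a root of $Q_k$, hence in $N_\delta$. So $N_\delta$ is Hutchinson-invariant in degree $n$ for all $n \ge n_0(\delta)$.

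Finally, for the continuously Hutchinson-invariant version, I note that the entire argument above only used that $n$ is a large real parameter, not that it is an integer: $\psi_T(x,t,n)$ is a polynomial in $\bC[x,t,n]$, the ratios $(n)_j/(n)_k$ still tend to $0$ as the real variable $n \to +\infty$ (for $n$ real and large these are ratios of products of positive reals), and the same uniform bound over $|t|<R$ and $x \notin N_\delta$ applies. Hence the $\delta$-neighborhood of the zero locus of $Q_k$ lies in $\ctinvset{\ge n}$ for all sufficiently large $n$, which gives the last sentence.

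The main obstacle I anticipate is making the boundedness estimate genuinely uniform on the correct region: the rational functions $(x-t)^{k-j}Q_j(x)/Q_k(x)$ are bounded at $\infty$ (by the degree count) and bounded on any compact set avoiding the zeros of $Q_k$, but one must glue these two facts into a single constant $C$ valid on all of $\{x : \mathrm{dist}(x,\mathrm{Conv}(Q_k))\ge \delta'\}$ for a suitable $\delta' \le \delta$, uniformly in $|t|<R$ — this is a routine compactness-plus-behavior-at-infinity argument but needs to be stated carefully, and one should double-check the edge case where $Q_k$ has repeated roots or where some $\deg Q_j - j = \deg Q_k - k$ (equality in the Fuchs index), since then the ratio is bounded but not decaying, which is exactly why the $(n)_j/(n)_k \to 0$ factor is essential.
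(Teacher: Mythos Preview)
Your approach is correct and essentially the same as the paper's: both factor out $(x-t)^{n-k}$ and normalize $\psi_T(x,t,n)$ by $(n)_k$ so that the leading contribution is $Q_k(x)$ and the remaining terms vanish as $n\to\infty$, then conclude that the roots of $\psi_T$ converge to those of $Q_k$ uniformly for $|t|<R$. The only cosmetic difference is that the paper stops after writing
\[
\frac{\psi(x,t,n)}{(n)_k} = Q_k(x) + \sum_{j=1}^{k} \frac{Q_{k-j}(x)(x-t)^{j}}{(n-k+1)\cdots(n-k+j)}
\]
and invokes continuity of roots of a polynomial of fixed degree $\deg Q_k$ in its coefficients (the non-degeneracy guarantees each summand has degree at most $\deg Q_k$, so the coefficients converge uniformly in $|t|<R$), whereas you go one step further, divide by $Q_k(x)$, and argue directly that the expression is $1+o(1)$ on the complement of the $\delta$-neighborhood of the zero set of $Q_k$; your compactness-plus-behavior-at-infinity worry is handled automatically in the paper's version by working with coefficients rather than values.
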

\begin{proof}
Fix $R>0$ and $\delta>0$. A straightforward calculation shows that
\[
 \frac{\psi(x,t,n)}{n(n-1)\dotsm (n-k+1)} 
 = Q_k(x) + \sum_{j=1}^{k} \frac{Q_{k-j}(x)(x-t)^{j}}{(n-k+1)(n-k+2)\dotsm (n-k+j)}.
\]
Hence, the zeros $\psi(x,t,n)=0$ tend to the zeros $Q_k(x)$
as $n\to \infty$, provided that $|t| < R$.
Thus, for some $n_0 \coloneqq n_0(\delta)$, all roots of 
$\psi(x,t,n)=0$ lie at a distance at 
most $\delta$ from the fundamental polygon of $T$.
\end{proof}

\begin{figure}[!ht]
\centering
\includegraphics[width=0.25\textwidth]{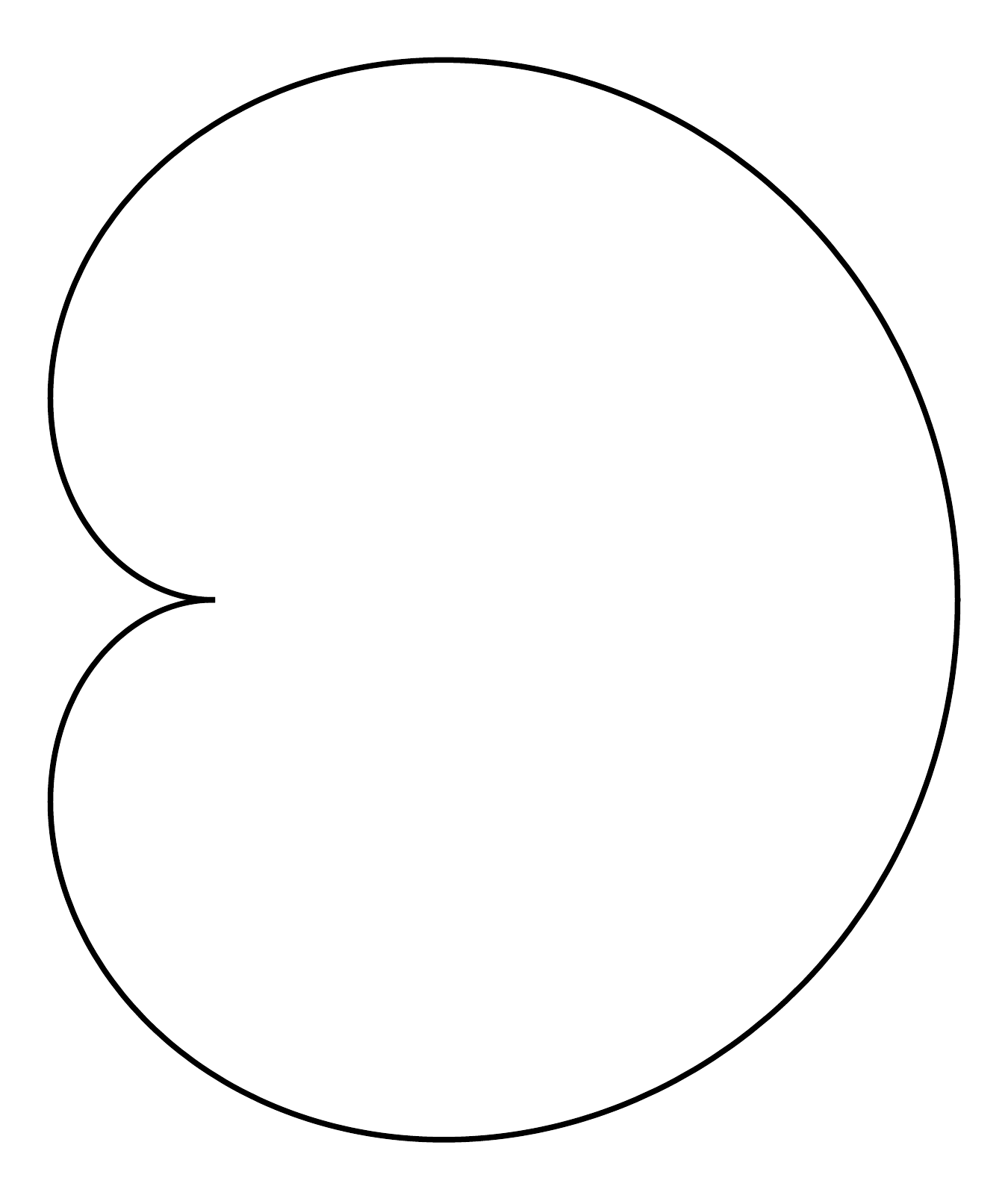}\hspace{1cm}
\includegraphics[width=0.4\textwidth]{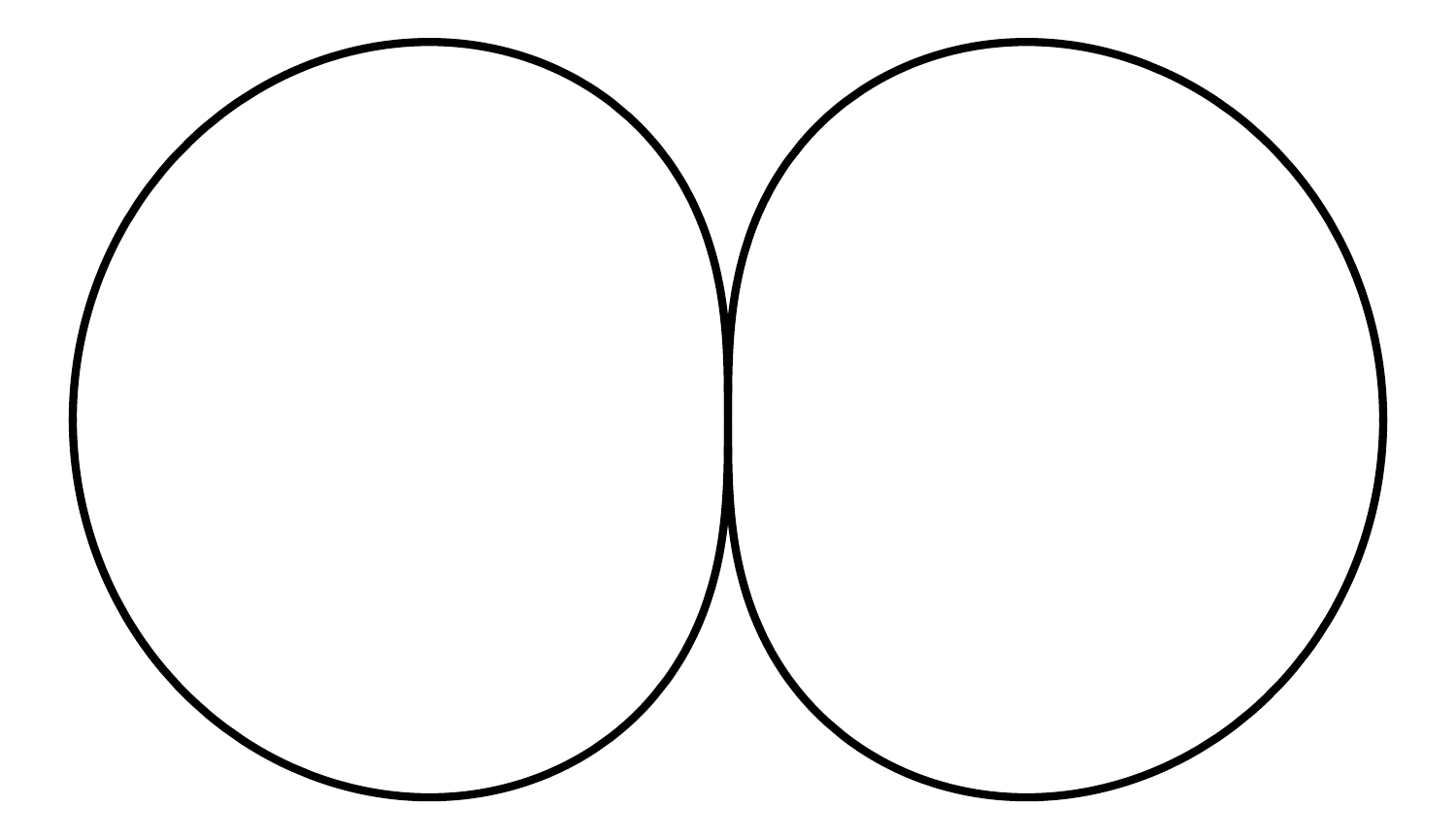}
\caption{The  boundaries of the  minimal continuously Hutchinson-invariant sets $\ctminvset{\geq 0}$ 
for the operators $T = z^2 \frac{d}{dz} + (z-1)$ (left), 
and $T = z^3 \frac{d}{dz} + (z+1)(z-1)$ (right). 
The first curve is parameterized by $r(\theta) = \frac{\sin{\theta}}{\theta}$
in polar coordinates, while the second is given by the equation $r^2(\theta) = \frac{\sin{2\theta}}{2\theta}$.
Proofs of these facts can be found in \cite{AHNST}.
}
\label{fig:cochleoid}
\end{figure}

\medskip 
\noindent 
\textbf{Variation 4: two-point continuously Hutchinson invariant sets.} 
 Our last variation of the notion of invariant sets is 
 inspired by the convexity property of invariant sets from $\invset{\ge n}$.  

Set $P(x) \coloneqq (x-t_1)^{n_1} (x-t_2)^{n_2}$
and consider 
\[
\phi(x,t_1,n_1,t_2,n_2) \coloneqq \frac{T(P)}{ (x-t_1)^{n_1-k} (x-t_2)^{n_2-k}},
\]
where $k$ is the order of the operator $T$. 
Again, $\phi(x,t_1,n_1,t_2,n_2)$ is a polynomial in $\bC[x,t_1,n_1,t_2,n_2]$. 
Given $n_0 \geq 0$, a set $S\subset \bC$ is called \defin{two-point continuously Hutchinson invariant with parameters $\geq n_0$} 
if for every pair of real number $n_1,n_2 \geq n_0$, we have that 
\[
 \phi(x,t_1,n_1,t_2,n_2) = 0 \qquad \text{(considered as a polynomial in $x$)}
\]
has all roots in $S$, whenever $t_1, t_2 \in S$. 
We denote by $\cttminvset{\geq n_0}$  the minimal under 
inclusion non-empty closed  set $S$ which is two-point Hutchinson invariant with parameters $\geq n_0$ (if it exists). 

Obviously, 
$\ctminvset{\geq n_0} \subseteq \cttminvset{\geq n_0}$.
Moreover, we can apply the same technique as in Theorem~\ref{th:generalGeN},
to show that two-point continuous invariant sets are \emph{convex}.

%

%


\begin{remark}

The linear operators which factor as in \eqref{eq:factorizable}
allow us to produce a large class of fractal sets associated with Hutchinson operators,
where each map is an affine contraction from $\bC$ to $\bC$.
These minimal invariant sets $\hminvset{n}$ are fractals, and therefore might be difficult to study.
It is highly plausible that 
continuously Hutchinson invariant set $\ctminvset{\geq 0}$
or its larger convex cousin $\cttminvset{\geq 0}$ have piecewise analytic boundary. For operators of order $1$, discussions of analyticity of the boundary of the former set can be found   in \cite{AHNST}. 
Remember that we have the set of inclusions
\[
 \hminvset{n} \subseteq \ctminvset{\geq 0} \subseteq \cttminvset{\geq 0},
\]
so a simple description of $\ctminvset{\geq 0}$ may provide some additional insight 
in the nature of $\hminvset{n}$. 

\end{remark}

\section{Some open problems}\label{sec:final}

Here we present  a very small sample of unsolved questions directly related to the results of this paper. 

\begin{enumerate}[label=\bfseries{\arabic*}.]
 \item 

The major open problem  is  whether it is possible 
to describe the boundary of $\minvset{\ge n}$ for non-degenerate or 
degenerate operators with non-defining Newton polygons and $Q_k$ different 
from a constant.  At the moment we only have some information what happens with $\minvset{\ge n}$ when $n\to \infty$. 
Already for no-degenerate operators of order $1$ this problem seems to be quite non-trivial, comp. \cite{AHNST}. 

\smallskip
\item
Another important  issue  is how $\minvset{\ge n}$  depend on the coefficients of operator $T$. 
It seems that even in the case when $T$ is non-degenerate and $n$ is such that  $\minvset{\ge n}$ is compact, 
it might loose compactness under small deformation of $T$ with the space of non-degenerate operators of the same order. 
Even for operators of order one the question is non-trivial. 
For example, consider the space of pairs of polynomials $(Q_1(x), Q_0(x))$ where $\deg Q_1(x)=k,\;\deg Q_0(x)=k-1$ and $T=Q_1(x)\frac{d}{dx}+Q_0(x)$. Fixing a positive integer $n$, 
is it possible to describe the space of such pairs $(Q_1(x), Q_0(x))$  for which  $\minvset{\ge n}$ is compact?

\smallskip
\item
Is it possible to characterize the invariant sets for Case B, i.e. operators with constant leading term and $\deg Q_{k-1} = 1$, see end of \S~\ref{sec:deg}.


\end{enumerate}

\bibliographystyle{amsalpha}
\bibliography{./theBibliography}

\end{document}